\title{Eight flavours of cyclic homology}
\author{K.~Cieliebak and E.~Volkov}
\theoremstyle{plain}
\newtheorem{theorem}{Theorem}[section]
\newtheorem{thm}[theorem]{Theorem}
\newtheorem{corollary}[theorem]{Corollary}
\newtheorem{cor}[theorem]{Corollary}
\newtheorem{proposition}[theorem]{Proposition}
\newtheorem{prop}[theorem]{Proposition}
\newtheorem{lemma}[theorem]{Lemma}
\newtheorem{lem}[theorem]{Lemma}
\theoremstyle{remark}
\newtheorem{remark}[theorem]{Remark}
\newtheorem{example}[theorem]{Example}
\newtheorem{definition}{Definition}
\newcommand{\id}{{\rm id}}
\newcommand{\ol}{\overline}
\newcommand{\p}{\partial}
\newcommand{\om}{\omega}
\newcommand{\Om}{\Omega}
\newcommand{\eps}{\varepsilon}
\newcommand{\into}{\hookrightarrow}
\newcommand{\la}{\langle}
\newcommand{\ra}{\rangle}
\newcommand{\wt}{\widetilde}
\newcommand{\wh}{\widehat}
\newcommand{\N}{{\mathbb{N}}}
\newcommand{\Z}{{\mathbb{Z}}}
\newcommand{\R}{{\mathbb{R}}}
\newcommand{\m}{{\bf m}}
\newcommand{\im}{{\rm im\,}}        % image
\newcommand{\vol}{{\rm vol}}
\newcommand{\pt}{{\rm pt}}
\newcommand{\sign}{{\rm sign}}
\newcommand{\Diag}{{\rm Diag}}
\newcommand{\ver}{{\rm ver}}
\newcommand{\hor}{{\rm hor}}
\newcommand{\ev}{{\rm ev}}
\newcommand{\cyc}{{\rm cyc}}
\newcommand{\cp}{{\rm cp}}
\newcommand{\red}{{\rm red}}
\newcommand{\Hom}{{\rm Hom}}
\newcommand{\FF}{\mathcal{F}}
\newcommand{\HH}{\mathcal{H}}
\newcommand{\mm}{{\mathfrak m}}
\newcommand{\ff}{{\mathfrak f}}
\newcommand{\fm}{{\mathfrak m}}
\newcommand{\fn}{{\mathfrak n}}
\begin{document}

\begin{abstract}
We introduce eight ``flavours'' of cyclic homology of a
mixed complex and study their properties. In particular, we determine
their behaviour with respect to Chen's iterated integrals. 
\end{abstract}

\maketitle
%\tableofcontents

%%%%%%%%%%%%%%%%%%%%%%%%%%%%%%%%%%%%%%%%%%%%%%%%%%%%%%%%%%%%%%%%%%%%%%%%%%%%
\section{Introduction}
%%%%%%%%%%%%%%%%%%%%%%%%%%%%%%%%%%%%%%%%%%%%%%%%%%%%%%%%%%%%%%%%%%%%%%%%%%%%

Cyclic homology of an algebra was introduced in the mid-1980s by
B.~Tsygan~\cite{Tsygan} and A.~Connes~\cite{Connes}. It can be seen as
an algebraic counterpart to the $S^1$-equivariant homology of a space
with a circle action. Since then, cyclic homology has been generalized
to differential graded algebras (dgas), $A_\infty$-algebras, and beyond.
Moreover, several other versions of cyclic homology have emerged:
negative cyclic homology (Jones~\cite{Jones}), periodic cyclic
homology (Goodwillie~\cite{Goodwillie}), and completed negative cyclic
homology (Jones and Petrack~\cite{Jones-Petrack}). 

Cyclic homology is related to the homology of loop spaces in two
different ways.
First, for each connected space $X$, a suitable version of cyclic homology
of singular chains on the based loop space of $X$ (made a dga by the
Pontrjagin product) is isomorphic to the $S^1$-equivariant homology of
its free loop space $LX$.
The second relation, which was the starting point of this paper, 
goes back to the work by K.T.~Chen~\cite{Chen73,Chen77}. For a simply connected
manifold $X$, Chen showed that the singular cohomology of its based
loop space can be computed in terms of iterated integrals of differential
forms. Getzler, Jones and Petrack~\cite{Getzler-Jones-Petrack}
extended this result to the homology and the $S^1$-equivariant
homology of the free loop space $LX$. 

The goal of the present paper is a systematic study of the different
versions of cyclic homology and their relation to loop space homology
via Chen's iterated integrals. The natural setting is that of a {\em
  mixed complex}, introduced by Kassel~\cite{Kassel87} (and popularized
by Getzler, Jones and Petrack~\cite{Getzler-Jones-Petrack} under the
name dg-$\Lambda$-module). This is a $\Z$-graded vector space $C$
%$C=\bigoplus_{k\in\Z}C^k$
together with two anticommuting
differentials $\delta,D$ of degrees $|\delta|=1$ and $|D|=-1$.
The two main examples are the Hochschild complex $(C(A),d_H,B)$ 
of a dga $A$ with its Connes operator $B$, and the singular cochain
complex $(C^*(Y),d,P)$ of an $S^1$-space with its contraction $P$ by
the circle action. See Section~\ref{sec:mixed} for details.

For a mixed complex $(C,\delta,D)$, the map $\delta_u=\delta+uD$
defines a differential of degree $1$ on the space $C[[u,u^{-1}]]$
of formal power series in a degree $2$ variable $u$ and its inverse.
This complex has five subcomplexes $C[[u,u^{-1}]$, $C[u,u^{-1}]]$,
$C[u,u^{-1}]$, $C[[u]]$ and $C[u]$, corresponding to power series that
are polynomial in $u^{-1}$ etc, and two quotient complexes
$C[[u^{-1}]]=C[[u,u^{-1}]]/uC[[u]]$ and $C[u^{-1}]=C[u,u^{-1}]/uC[u]$.
{\let\thefootnote\relax\footnote{{\it Mathematics Subject Classification:} Primary 55N91}}
{\let\thefootnote\relax\footnote{{\it Keywords:} Cyclic homology, equivariant cohomology}}
These give rise to eight versions (or ``flavours'') of cyclic homology
that we denote by $HC^{[[u,u^{-1}]]}$ etc.\footnote{
Note that the differential $\delta_u$ computing cyclic homology has
degree $+1$; see Remark~\ref{rem:cyc-hom}(c) for further discussion of
our grading covention.
}
The eight versions of cyclic homology are in general all different,
and they are all invariant under homotopy equivalences of mixed complexes.
However, only the three versions $HC^{[u^{-1}]}$, $HC^{[[u]]}$ and
$HC^{[[u,u^{-1}]}$ are invariant under
quasi-isomorphisms of mixed complexes (Proposition~\ref{prop:quism-invariance}).
These correspond to the positive, negative and periodic cyclic
homologies in~\cite{Jones}, and we will refer to them as the {\em
  classical versions}. 

For the Hochschild complex of a dga (or more generally of a cyclic
cochain complex), each version of cyclic homology is either trivial or
agrees with one of the $3$ classical versions (Corollary~\ref{cor:cyccochain}).
Moreover, the version $HC^{[u^{-1}]}$ agrees with {\em Connes' version}
$HC^\lambda$, defined as the $d_H$-homology of the Hochschild complex
modulo cyclic permutations.

For the singular cochain complex of a smooth $S^1$-space $Y$ (such as the
free loop space of a manifold), the version $HC^{[[u]]}$ of cyclic homology
agrees with the singular cohomology $H^*_{S^1}(Y)$ of its Borel space
$(Y\times ES^1)/S^1$ (Jones~\cite{Jones}). 
The (non-classical) version $HC^{[[u,u^{-1}]]}$ satisfies fixed point localization
(Jones and Petrack~\cite{Jones-Petrack}), whereas the version
$HC^{[[u,u^{-1}]}$ for a free loop space $Y=LX$ depends only on the
fundamental group of $X$ (Goodwillie~\cite{Goodwillie}). 

Consider now a manifold $X$ with its de Rham dga $\Om^*(X)$. According
to Getzler, Jones and Petrack~\cite{Getzler-Jones-Petrack} (see also
Proposition~\ref{prop:chen} below), Chen's iterated integrals define a
morphism of mixed complexes 
$$
   I: \bigl(C(\Om^*(X)),d_H,B\bigr)\to \bigl(C^*(LX),d,P\bigr).
$$
For $X$ simply connected this map is a quasi-isomorphism, so it
induces isomorphisms on the three classical versions of cyclic homology.
On the other five versions it does not induce an isomorphism in general. 
The main result of this paper (Corollary~\ref{cor:main}) asserts that
for a simply connected manifold $X$ the cyclic (or Connes) variant
$I^\lambda$ of Chen's iterated integral induces an isomorphism
\begin{align*}
   I^\lambda_*: \ol{HC}^\lambda_*(\Om^*(X)) \stackrel{\cong}\longrightarrow H^*_{S^1}(LX,x_0)
\end{align*}
between the reduced Connes version of cyclic homology of the de Rham
complex and the $S^1$-equivariant cohomology of $LX$ relative
to a fixed constant loop $x_0$. 

All the preceding results have counterparts for cyclic {\em cohomology}.
For example (Corollary~\ref{cor:main-dual}),
for a simply connected manifold $X$ the map $J_\lambda$ adjoint to
$I^\lambda$ induces an isomorphism
$J_{\lambda*}: H_*^{S^1}(LX,x_0) \stackrel{\cong}\longrightarrow
\ol{HC}_\lambda^*(\Om^*(X))$. 
\medskip

The motivation for this article comes from {\em string topology}. This
term refers to algebraic structures on loop space homologies 
introduced by Chas and Sullivan in~\cite{Chas-Sullivan99} and subsequent work.
One of the puzzles in string topology concerns the appropriate
versions of loop space homology on which these structures are defined.
In the non-equivariant case, it has recently turned out that the
Chas--Sullivan loop product and the Goresky--Hingston coproduct
both naturally live on $H_*(LX)$ if the manifold $X$ has vanishing
Euler characteristic, and on $H_*(LX,x_0)$ otherwise~\cite{Cieliebak-Hingston-Oancea-PD}.  

In the equivariant case, Chas and Sullivan described in~\cite{Chas-Sullivan04}
an involutive Lie bialgebra structure on $H^{S^1}_*(LX,X)$, the
$S^1$-equivariant homology of the loop space $LX$ relative to the
subset $X\subset LX$ of constant loops. 
In~\cite{Cieliebak-Fukaya-Latschev} it was conjectured that a chain-level version of this
structure exists on $HC_\lambda^*(\Om^*(X))$, the Connes version of
cyclic cohomology of the de Rham complex, so the question arose what 
this corresponds to on the loop space side. This question became more
pressing when computations of examples showed that $HC_\lambda^*(\Om^*(X))$
can be nontrivial in negative degrees, and thus cannot correspond to
any version of loop space homology. The solution is provided by
Corollary~\ref{cor:main-dual}: the negative degree part in $HC_\lambda^*(\Om^*(X))$ 
only comes from the cohomology of a point, and after dividing this out (i.e.,
passing to reduced cohomology) it becomes isomorphic to $H_*^{S^1}(LX,x_0)$.
In particular, this exhibits loop space homology relative to a point
as the natural space supporting the involutive Lie bialgebra structure
from~\cite{Chas-Sullivan04}. 
\medskip

{\bf Acknowledgements. }
We thank P.~H\'ajek for many stimulating discussions, and N.~Bondarenko for drawing the figures. 

%%%%%%%%%%%%%%%%%%%%%%%%%%%%%%%%%%%%%%%%%%%%%%%%%%%%%%%%%%%%%%%%%%%%%%%%%%%%
\section{Cyclic homology of mixed complexes}\label{sec:mixed} 
%%%%%%%%%%%%%%%%%%%%%%%%%%%%%%%%%%%%%%%%%%%%%%%%%%%%%%%%%%%%%%%%%%%%%%%%%%%%

In this section we introduce the $8$ versions of cyclic homology
associated to a mixed complex and discuss their properties. Moreover,
we will establish the following diagram in which all maps are
functors and the upper %right square and triangle
triangles commute, whereas the
lower pentagon commutes for simply connected manifolds. %\footnote{
(The map from dgas to Connes double complexes actually factors through
$A_\infty$-algebras, but we will not discuss those in this paper.)

\begin{equation}\label{eq:functors}
\xymatrix{
&\text{dga} \ar[d] \ar[dr] \\ % &\text{$A_\infty$-algebra} \ar[d] \\ 
\text{manifold} \ar[ur]^{\text{de Rham}} \ar[r] \ar[dr]_{\text{loop space}} &\text{cyclic cochain complex} \ar[r] &\text{Connes double complex} \ar[d] \\ 
&\text{$S^1$-space} \ar[r]^{\text{singular cochains}} &\text{mixed complex} \ar[d]_{\text{cyclic homologies}} \\ 
&&\text{$\R[u]$-module} \\
%&\text{dga} \ar[d] \ar[r] &\text{$A_\infty$-algebra} \ar[d] \\ 
%\text{manifold} \ar[ur]^{\text{de Rham}} \ar[r] \ar[dr]_{\text{loop space}} &\text{cyclic cochain complex} \ar[r] &\text{Connes double complex} \ar[d] \\ 
%&\text{$S^1$-space} \ar[r]^{\text{singular cochains}} &\text{mixed complex} \ar[d]_{\text{cyclic homologies}} \\ 
%&&\text{$\R[u]$-module} \\
}
\end{equation}

%%%
\subsection{Mixed complexes}
%%%

\begin{definition}
A {\em mixed complex} $(C,\delta,D)$ is a $\Z$-graded $\R$-vector
space\footnote{Most of this section works for modules over a
  commutative ring with unit instead of $\R$-vector spaces.}
$$
  C=\bigoplus_{k\in\Z}C_k
$$ 
with two linear maps $\delta,D:C\to C$ of degrees $|\delta|=1$ and $|D|=-1$ satisfying 
$$
   \delta^2=0,\qquad D^2=0,\qquad \delta D+D\delta=0. 
$$
A {\em morphism} between mixed complexes is a linear map $f:C\to\wt
C$ of degree $0$ satisfying 
%\marginpar{\small The second relation can be generalized, see~\cite{Zhao}. We
%need to see which notion is appropriate for us.}
$$
   \wt \delta f=f\delta,\qquad \wt Df=fD. 
$$
It is called a {\em quasi-isomorphism} if it induces an isomorphism
on homology $H(C,\delta)\to H(\wt C,\wt \delta)$. 
A {\em homotopy} between two morphisms $f,g:C\to\wt C$ is a linear
map $H:C\to\wt C$ of degree $-1$ such that $\delta H+H\delta=f-g$ and $DH+HD=0$.
A morphism $f:C\to\wt C$ is called a {\em homotopy equivalence} if
there exists a morphism $g:\wt C\to C$ such that $fg$ and $gf$ are
homotopic to the identity. 
Every homotopy equivalence is a quasi-isomorphism but not vice versa. 
\end{definition}

Let $u$ be a formal variable of degree $|u|=2$. To a mixed complex $(C,\delta,D)$
we associate the cochain complex 
$$
   C[[u,u^{-1}]]:=\bigoplus_{k\in\Z}C_k[[u,u^{-1}]],\qquad
   \delta_u:=\delta+uD.
$$
where $C_k[[u,u^{-1}]]$ denotes the space of formal power series
$\sum_{i\in\Z}c_iu^i$ with $c_i\in C_{k-2i}$. We emphasize that
$C[[u,u^{-1}]]$ is {\em not} the usual tensor product of $C$ with
$\R[[u,u^{-1}]]$. Note that $\delta_u$ has degree $+1$. This complex has
seven sub/quotient complexes of interest, all equipped with the
differential induced by $\delta_u$:
\begin{equation}\label{eq:cyc-versions}
\begin{gathered}
   C[[u,u^{-1}],\quad C[u,u^{-1}]],\quad C[u,u^{-1}],\quad
   C[[u]],\quad C[u], \cr 
   C[[u^{-1}]] := C[[u,u^{-1}]]/uC[[u]] = C[u,u^{-1}]]/uC[u], \cr
   C[u^{-1}] := C[[u,u^{-1}]/uC[[u]] = C[u,u^{-1}]/uC[u].
\end{gathered}
\end{equation}
Here the complexes in the first line are the obvious subcomplexes of
$C[[u,u^{-1}]]$, where $C[[u,u^{-1}]$ denotes power series in $u$
and polynomials in $u^{-1}$ etc., and the remaining two complexes are
quotients. Note that 
$$
   C[u,u^{-1}] = u^{-1}C[u],\qquad C[[u,u^{-1}] = u^{-1}C[[u]],
$$
where the right hand sides denote the localization of $C[u]$
(resp.~$C[[u]]$) at the multiplicative set $\{1,u,u^2,\dots\}$. 
We denote the homology of $C[[u,u^{-1}]]$ with respect to $\delta_u$ by 
$$
   HC^{[[u,u^{-1}]]} := H\Bigl(C[[u,u^{-1}]],\delta_u\Bigr),
$$
and similarly for the other versions with the obvious notation. 
By construction, all the chain complexes and thus also their
homologies are modules over the polynomial ring $\R[u]$. Moreover, the 
versions $C[[u,u^{-1}]]$, $C[[u,u^{-1}]$, $C[u,u^{-1}]]$,
$C[u,u^{-1}]$ and their cohomologies are modules over the larger ring
$\R[u,u^{-1}]$ of Laurent polynomials. 
%\begin{definition}
We will use the following names for some versions of cyclic homology:
\vspace{-5mm}
\begin{itemize}
\item $HC^{[[u]]}$ {\em Borel version};
\item $HC^{[[u,u^{-1}]}$ {\em Goodwillie version};
\item $HC^{[u^{-1}]}$ {\em nilpotent version};
\item $HC^{[[u,u^{-1}]]}$ {\em Jones--Petrack version}.
\end{itemize}
%\end{definition}
The first three of these versions will also be called the {\em
  classical versions}. 

\begin{remark}\label{rem:cyc-hom}
(a) The explanations for the preceding names are the
following (see later in this section for details): 
$HC^{[[u]]}$ applied to cochains on an $S^1$-space yields the
cohomology of its Borel construction;
$HC^{[[u,u^{-1}]}$ satisfies Goodwillie's theorem: applied to cochains
on a loop space $LX$ it depends only on $\pi_1(X)$; 
$HC^{[u^{-1}]}$ is the version for which the action of $u$ is nilpotent;
$HC^{[[u,u^{-1}]]}$ applied to a smooth $S^1$-space satisfies the fixed
point localization theorem of Jones and Petrack. 

(b) The notion of a mixed complex was introduced by
Kassel~\cite{Kassel87}. Its name reflects the fact that $\delta$ has degree $+1$
while $D$ has degree $-1$. Mixed complexes also appear
in~\cite{Getzler-Jones-Petrack} under the name {\em dg-$\Lambda$-module}.

(c) Note that the differential $\delta_u$ computing cyclic homology has
degree $+1$. This convention agrees with the one
in~\cite{Getzler-Jones-Petrack}, and it is related to the one
in~\cite{Kassel87,Jones,Loday} by replacing all degrees by their negatives.
More precisely, if $C_*$ is a mixed complex in our sense, then
$C_{-*}$ is a mixed complex in the sense of~\cite{Kassel87,Jones,Loday}
and their cyclic homology $H_*(C_{-*})$ equals our cyclic homology
$H_{-*}(C_*)$ in negative degrees. 
%Let us emphasize that our eight versions of $HC^*$ correspond to
%the cyclic {\em homology} of the mixed complex $(C,\delta,D)$. We
%write them with an upper $*$ because the differential $\delta_u$ has
%degree $+1$, in the same way that the homology of a cochain complex is
%denoted by $H^*$.
Our grading convention avoids unnecessary minus signs in our
main examples which arise from cochain complexes, such as the de Rham
complex of a manifold or the singular cochain complex of its free loop space. 
%To fit these examples into the framework of standard references for
%cyclic homology such as~\cite{Jones,Loday}, their degrees $*$ need to
%be replaced by $-*$ in order to view them as chain complexes. Their
%cyclic homology $HC_{-*}$ in the sense of~\cite{Jones,Loday} then
%equals our $HC^*$. 
\end{remark}

\begin{example}
Consider the mixed complex with $C_k:=\R$ in each degree $k\in\Z$ and
trivial differentials $d=D=0$. Then in each degree $k$ and for each
version $\{u,u^{-1}\}$ we have $HC_k^{\{u,u^{-1}\}}=C_k^{\{u,u^{-1}\}}=\R\{u,u^{-1}\}$, 
so all eight versions of cyclic homology are pairwise non-isomorphic as $\R[u]$-modules.
\end{example}

{\bf Quasi-isomorphism invariance. }
A morphism $f$ between mixed complexes $(C,\delta,D)$ and $(\wt C,\wt \delta,\wt D)$
induces homomorphisms $f_*$ between all versions of homology defined
above as modules over $\R[u]$ resp.~$\R[u,u^{-1}]$. 
Cleary $f_*$ is an isomorphism if $f$ is a homotopy equivalence. 
We say that a version of homology is {\em quasi-isomorphism invariant}
if the induced map $f_*$ is an isomorphism whenever $f$ is a
quasi-isomorphism of mixed complexes. 

\begin{prop}\label{prop:quism-invariance} 
The $3$ classical versions $HC^{[[u]]}$, $HC^{[[u,u^{-1}]}$ and
$HC^{[u^{-1}]}$ of cyclic homology are quasi-isomorphism
invariants of mixed complexes, whereas the other $5$ versions are not. 
\end{prop}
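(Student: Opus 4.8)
The plan is to prove the two halves by entirely different methods: invariance of the three classical versions by filtration (mapping-cone) arguments, and failure of invariance of the other five by exhibiting $\delta$-acyclic mixed complexes on which they are nonzero.

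For the invariance, I would filter each classical complex by powers of $u$ so that the associated graded is $(C,\delta)$; since $\delta$ preserves the $u$-power and $uD$ raises it by one, $\delta_u$ is filtered in each case. A morphism $f$ of mixed complexes is then filtered, and if $f$ is a quasi-isomorphism the associated graded of its mapping cone is acyclic. For $C[[u]]$ I filter by $F^pC[[u]]=u^pC[[u]]$, which is complete, Hausdorff and exhaustive; a complete filtered complex with acyclic associated graded is itself acyclic, so the cone is acyclic and $f$ is a quasi-isomorphism on $C[[u]]$. For $C[u^{-1}]$ I use the increasing filtration by $u^{-1}$-degree (on the quotient the stray term produced by $uD$ lands in $u^{\ge1}$ and is killed); this filtration is exhaustive and bounded below, so again the cone is acyclic. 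Finally $C[[u,u^{-1}]=C[[u]]\otimes_{\R[u]}\R[u,u^{-1}]$ is the localization of $C[[u]]$ at $u$, i.e. the filtered colimit of $C[[u]]\xrightarrow{u}C[[u]]\xrightarrow{u}\cdots$; as homology commutes with this localization, invariance for $C[[u]]$ gives invariance for $C[[u,u^{-1}]$.

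For the failure of invariance, note that if $(C,\delta)$ is acyclic then $C\to0$ is a quasi-isomorphism, so it suffices to build $\delta$-acyclic mixed complexes with nonzero homology in each version. A single rank-one example handles four of them: take $x_i$ in degree $2i$ and $y_i$ in degree $2i+1$ ($i\in\Z$) with $\delta x_i=y_i$, $Dx_i=y_{i-1}$ and $\delta y_i=Dy_i=0$. This is $\delta$-acyclic, and on even chains $\sum_i a_ix_iu^{-i}$ the differential becomes $(Ta)_i=a_i+a_{i+1}$ (multiplication by $1+z^{-1}$ on generating functions). Tracking supports: the cycle $y_0$ fails to bound in $C[u]$ and $C[u,u^{-1}]$ (every primitive needs infinitely many powers of $u$), whereas the alternating sums $\sum_{i\ge0}(-1)^ix_iu^{-i}$ and $\sum_{i\in\Z}(-1)^ix_iu^{-i}$ are $\delta_u$-cycles that do not bound, giving nonzero classes in $C[[u^{-1}]]$ and $C[[u,u^{-1}]]$ respectively; for the three classical versions $T$ is an isomorphism and the homology vanishes, as it must.

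The remaining version $C[u,u^{-1}]]$ is the real obstacle, and I expect it to force an infinite-dimensional construction. Indeed, for a complex with finite-dimensional graded pieces, $\delta$-acyclicity makes the $\delta$-part of the differential invertible, so on $C[u,u^{-1}]]$ the differential is multiplication by a matrix $M(z)=A+Bz^{-1}$ with $A$ invertible; then $\det M(z)$ is a nonzero element of the Laurent series field $\R((z))$, hence $M(z)$ is invertible and the homology vanishes. To defeat this field argument one needs infinitely many generators per degree. A construction that works is $x_{i,n}$ in degree $2i$ and $y_{i,n}$ in degree $2i+1$ ($i\in\Z$, $n\ge0$), with $\delta x_{i,n}=y_{i,n}$ and $Dx_{i,n}=y_{i-1,n+1}$: it is $\delta$-acyclic, the differential is $(Ta)_{j,m}=a_{j,m}+a_{j+1,m-1}$ (with $a_{\cdot,-1}=0$) which is injective, and the cycle $\sum_{j\ge0}y_{j,0}u^{-j}$ — a bona fide power series in $u^{-1}$, hence an element of $C[u,u^{-1}]]$ — does not bound: its unique formal primitive has coefficients $(-1)^m$ along the diagonals $j+m=\mathrm{const}$ and is therefore not finitely supported in the generators, so it is not a chain. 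This $\varprojlim^1$-type phenomenon, invisible to completion in $u^{-1}$, is the heart of the matter; a direct sum of the two acyclic complexes above then detects all five non-classical versions at once.
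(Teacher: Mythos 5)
Your proof is correct, but both halves take a genuinely different route from the paper. For the positive half the paper simply cites \cite{Jones} and \cite{Zhao}; your three filtration arguments (completeness of the $u$-adic filtration on $C[[u]]$, the exhaustive bounded-below filtration on $C[u^{-1}]$, and $C[[u,u^{-1}]=\varinjlim\bigl(C[[u]]\stackrel{\cdot u}\to C[[u]]\to\cdots\bigr)$ together with exactness of filtered colimits) are essentially the standard proofs behind those citations, so you gain self-containedness at no extra cost. For the negative half the paper does not compare with the zero complex: it exhibits three quasi-isomorphic but non-acyclic mixed complexes of geometric origin --- the Hochschild complex of $\R$, the singular cochains on a point, and the Cartan--Weil model of $ES^1$ (Examples~\ref{ex:point-deRham}, \ref{ex:point-sing}, \ref{ex:ES1-sing}), all with $H(C,\delta)=\R$ --- and reads off from their homology tables that each of the five non-classical versions takes at least two distinct values. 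What the paper's choice buys is that all its examples are graded finite dimensional and are reused elsewhere (e.g.\ in Corollary~\ref{cor:GJP}); in particular the $[u,u^{-1}]]$ version is already separated by the point versus $C(\R)$. What your choice buys is conceptual economy (one acyclic complex mapping to $0$ per version), at the price of the infinite-dimensional construction for $HC^*_{[u,u^{-1}]]}$ --- and your resolution of that is sound: in the complex with $\delta x_{i,n}=y_{i,n}$, $Dx_{i,n}=y_{i-1,n+1}$ the map on coefficients is injective, so the formal primitive of $\sum_{j\ge 0}y_{j,0}u^{-j}$ is unique, and since its coefficients are not finitely supported in $n$ for fixed power of $u$ the class indeed survives. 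One caveat: the determinant heuristic over the Laurent series field is only literally valid when $\delta$ is an isomorphism in each degree (acyclicity gives $\ker\delta=\im\delta$, not invertibility), but since you use it only as motivation and then produce an explicit example, this does not affect the proof.
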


\begin{proof}
The quasi-isomorphism invariance of the $3$ classical versions is 
proved in~\cite[Lemma 2.1]{Jones} in the special case of cyclic chain
complexes, and in~\cite[Proposition 2.4]{Zhao} in the more general
context of $S^1$-complexes (cf.~Remark~\ref{rem:S1-complexes}
below). 
Examples~\ref{ex:point-deRham}, \ref{ex:point-sing} and~\ref{ex:ES1-sing} 
below in conjunction with Lemma~\ref{lem:examples-quiso} show that the
other $5$ homology groups are not quasi-isomorphism invariant.  
\end{proof}

\begin{remark}\label{rem:S1-complexes}
Much of the preceding discussion can be generalized to $S^1$-complexes
as defined in~\cite{Zhao}. This generalization is relevant if one
wants to include symplectic homology in this framework, but will not
be further discussed in this paper.
\end{remark}

{\bf Exact sequences. }
The eight versions of cyclic homology are connected by various exact
sequences fitting into commuting diagrams. 

\begin{prop}[Hood and Jones~\cite{Hood-Jones}]\label{prop:gysin}
For every mixed complex $(C,\delta,D)$ there exists a commuting
diagram with exact rows and columns
$$
\begin{CD}
   && \cdots && \cdots && \\
   && HC_{*+1}^{[u^{-1}]} @>{\id}>{=}> HC_{*+1}^{[u^{-1}]} && \\
   && @VV{D_0}V @VV{D_0}V && \\
   \cdots HC_{*-2}^{[u]} @>{\cdot u}>> HC_{*}^{[u]} @>{u=0}>> H_*(C,\delta) @>{D_*}>> HC_{*-1}^{[u]}\cdots \\
   @V{=}V{\id}V @VV{i_*}V @VV{i_*}V @V{=}V{\id}V \\
   \cdots HC_{*-2}^{[u]} @>{\cdot u}>> HC_{*}^{[u,u^{-1}]} @>{p_*}>> HC_*^{[u^{-1}]} @>{D_{0*}}>> HC_{*-1}^{[u]}\cdots \\
   && @VV{p_*\cdot u}V @VV{\cdot u}V && \\
   && HC_{*+2}^{[u^{-1}]} @>{\id}>{=}> HC_{*+2}^{[u^{-1}]} && \\
   && \cdots && \cdots && 
\end{CD}
$$
and similarly for the $[u,u^{-1}]]$, $[[u,u^{-1}]]$ and $[[u,u^{-1}]$ versions. 
Here $D_0$ means $D$ applied to the constant term in $u$ and the other
maps are the obvious ones.  
\end{prop}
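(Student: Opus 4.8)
The plan is to realise the entire diagram as the braid of long exact sequences attached to the filtration of $(C[[u,u^{-1}]],\delta_u)$ by the $\delta_u$-subcomplexes
$$
   F_0=uC[u]\ \subset\ F_1=C[u]\ \subset\ F_2=C[u,u^{-1}].
$$
These are genuine subcomplexes because $u$ commutes with $\delta_u$, one has $\delta_u(u^k y)=u^k\delta_u y$. The relevant subquotients are $F_1/F_0=(C,\delta)$, $\ F_2/F_0=C[u^{-1}]$ and $F_2/F_1=C[u,u^{-1}]/C[u]$, and multiplication by $u$ identifies $F_0$ and $F_2/F_1$ with the $2$-fold degree shifts of $C[u]$ and $C[u^{-1}]$ respectively, since $|u|=2$. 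First I would record the four short exact sequences of complexes
$$
\begin{gathered}
0\to uC[u]\to C[u]\xrightarrow{\,u=0\,}C\to0,\qquad
0\to uC[u]\to C[u,u^{-1}]\xrightarrow{\,p\,}C[u^{-1}]\to0,\\
0\to C[u]\xrightarrow{\,i\,}C[u,u^{-1}]\to C[u,u^{-1}]/C[u]\to0,\qquad
0\to C\to C[u^{-1}]\xrightarrow{\,\cdot u\,}C[u^{-1}]\to0,
\end{gathered}
$$
the third being $0\to F_1\to F_2\to F_2/F_1\to0$ and the fourth the induced sequence $0\to F_1/F_0\to F_2/F_0\to F_2/F_1\to0$ of subquotients.

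Next I would pass to the associated long exact sequences. The first two are exactly the two nontrivial rows of the diagram, and the last two are the two nontrivial columns, each extended by identity sequences into the outer rows and columns. Under the degree-shifting identifications above the edge maps coming from $F_0\cong C[u]$ and $F_2/F_1\cong C[u^{-1}]$ become the multiplications $\cdot u$ and $p_*\cdot u$, while the inclusions and projections give $i_*$, $p_*$ and $u=0$. The one step with real content is the identification of the four connecting homomorphisms, and here the snake-lemma computation is uniform: a cycle in any of these subquotients is represented by a chain on which $\delta$ already vanishes, so applying $\delta_u=\delta+uD$ to a lift leaves only the term $uD(\,\cdot\,)$; dividing by $u$ to undo the degree shift converts this into $D$ applied to a single coefficient. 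This yields $D_*[c]=[Dc]$ for the top row, and $D$ on the constant coefficient, i.e.\ $D_0$ resp.\ $D_{0*}$, for the remaining three.

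Once the maps are identified, exactness of all rows and columns is immediate, and commutativity need not be checked square by square. Indeed, the two evident morphisms of short exact sequences
$$
   \bigl(0\to F_0\to F_1\to F_1/F_0\to0\bigr)\longrightarrow\bigl(0\to F_0\to F_2\to F_2/F_0\to0\bigr),
$$
given by $\id_{F_0}$ and the inclusion $F_1\hookrightarrow F_2$, and
$$
   \bigl(0\to F_1\to F_2\to F_2/F_1\to0\bigr)\longrightarrow\bigl(0\to F_1/F_0\to F_2/F_0\to F_2/F_1\to0\bigr),
$$
given by the quotient maps by $F_0$, induce respectively the vertical maps between the two middle rows and the horizontal maps between the two middle columns. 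Naturality of the long exact sequence then forces every square to commute, the outer rows and columns appearing as identities because the induced maps on $H^*(F_0)$ and $H^*(F_2/F_1)$ are the identity.

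I expect the only real obstacle to be bookkeeping rather than anything conceptual: keeping the degree shifts produced by multiplication by $u$ consistent throughout, so that for instance $C[u,u^{-1}]/C[u]$ is correctly identified with a $2$-fold shift of $C[u^{-1}]$ and each connecting map lands in the stated degree, and pinning down precisely which coefficient each connecting homomorphism differentiates by $D$. Once these are settled the proposition follows formally. Finally, the three analogous diagrams for the $[u,u^{-1}]]$, $[[u,u^{-1}]]$ and $[[u,u^{-1}]$ versions are obtained verbatim by replacing $C[u]$ with $C[[u]]$ and $C[u^{-1}]$ with $C[[u^{-1}]]$ throughout the four short exact sequences; the same two morphisms and the same connecting-map computation apply without change.
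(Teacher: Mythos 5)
Your proposal is correct and takes essentially the same route as the paper, which likewise derives the whole diagram from the commuting square of short exact sequences attached to the filtration $uC[u]\subset C[u]\subset C[u,u^{-1}]$ together with the identifications $C[u]/uC[u]=C$, $C[u,u^{-1}]/uC[u]=C[u^{-1}]$ and $C[u,u^{-1}]/C[u]=u^{-1}C[u^{-1}]$. (One sentence of your connecting-map discussion is loose: a $\delta_u$-cycle in $C[u^{-1}]$ need not have $\delta$-closed coefficients, only the telescoping relations $\delta c_{-i}+Dc_{-i-1}=0$ for $i\geq 0$; but these relations are exactly what kills all terms except $uDc_0$, so your identification of the connecting maps with $D_0$ still stands.)
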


We will refer to the second vertical and the first horizontal
sequences as the {\em Gysin (or Connes) exact sequences}, and to the first
vertical and the second horizontal sequences (which are equivalent in
view of the periodicity $HC_{*-2}^{[u,u^{-1}]}\cong HC_{*}^{[u,u^{-1}]}$) as
the {\em tautological exact sequences}.  

\begin{proof}
This diagram appears in~\cite[Figure 1]{Hood-Jones}. It follows from the commuting square of short exact sequences
$$
\begin{CD}
   && 0 && 0 && 0 && \\
   && @VVV @VVV @VVV && \\
   0 @>>> uC[u] @>>> C[u] @>>> C[u]/uC[u] @>>> 0 \\
   && @V{=}V{\id}V @VV{i}V @VV{i}V && \\
   0 @>>> uC[u] @>>> C[u,u^{-1}] @>{p}>> C[u,u^{-1}]/uC[u] @>>> 0 \\
   && @VVV @VV{p}V @VV{p}V && \\
   && 0 @>>> C[u,u^{-1}]/C[u] @>{\id}>{=}> C[u,u^{-1}]/C[u] @>>> 0 \\
   && && @VVV @VVV && \\
   && && 0 && 0 && &&
\end{CD}
$$
together with the identifications $C[u]/uC[u]=C$,
$C[u,u^{-1}]/uC[u]=C[u^{-1}]$, and $C[u,u^{-1}]/C[u]=u^{-1}C[u^{-1}]$.
\end{proof}

{\bf Other homologies. }
Given a mixed complex $(C,\delta,D)$, the chain complex $(C,\delta)$
has two natural subcomplexes $\im D\subset\ker D\subset C$ which
together with their quotient complexes fit into the commuting diagram
with exact rows and columns
$$
\xymatrix{
  & & & 0 \ar[d] \\
  & 0 \ar[d] & 0 \ar[d] & \ker D/\im D \ar[d] \\ 
  0 \ar[r] & \im D \ar[d] \ar[r] & (C,\delta) \ar@{=}[d] \ar[r] &
  C/\im D \ar[d] \ar[r] & 0 \\
  0 \ar[r] & \ker D \ar[d] \ar[r] & (C,\delta) \ar[d] \ar[r] & C/\ker
  D \ar[d] \ar[r] & 0 \\
  & \ker D/\im D \ar[d] & 0 & 0 \\
  & 0 
}
$$
On homology this yields the following commuting diagram
with exact rows and columns
$$
\xymatrix{
  H_{*-1}(\ker D/\im D) \ar[d]^{\delta_*} & & H_*(\ker D/\im D) \ar[d]
  \ar@{=}[r] & H_*(\ker D/\im D) \ar[d]^{\delta_*} \\  
  \cdots H_*(\im D) \ar[d] \ar[r] & H_*(C,\delta) \ar@{=}[d] \ar[r] &
  H_*(C/\im D) \ar[d] \ar[r]^{\delta_*} & H_{*+1}(\im D)\cdots \ar[d] \\
  \cdots H_*(\ker D) \ar[d] \ar[r] & H_*(C,\delta) \ar[r] & H_*(C/\ker
  D) \ar[d]^{\delta_*} \ar[r]^{\delta_*} & H_{*+1}(\ker D)\cdots \ar[d] \\
  H_*(\ker D/\im D) & & H_{*+1}(\ker D/\im D) \ar@{=}[r] & H_{*+1}(\ker D/\im D)
}
$$
A morphism $f$ of mixed complexes induces homomorphisms between all these
homologies, which are isomorphisms if $f$ is a homotopy equivalence. 
Note that the map $D_0$ in Proposition~\ref{prop:gysin} naturally
factors through chain maps (where $D$ has degree $-1$)
\begin{equation}\label{eq:D0}
  D_0:(C[[u^{-1}]],\delta_u)\stackrel{\pi_0}\longrightarrow
  (C/\im D,\delta)\stackrel{D}\longrightarrow
  (\im D,\delta)\stackrel{\iota_0}\longrightarrow
  (C[u],\delta_u).
\end{equation}

%%%
\subsection{Cocyclic and cyclic objects}\label{sec:cyc}
%%%

One source of mixed complexes are cyclic cochain complexes which we
introduce in this subsection. For more background see~\cite[Section 6.1]{Loday}. 

{\bf Cocyclic objects. }
A {\em cocyclic object} in some category is a sequence of objects
$C_n$, $n\in\N_0$, with morphisms 
\begin{itemize}
\item $\delta_i:C_{n-1}\to C_n$, $i=0,\dots,n$ (faces),
\item $\sigma_j:C_{n+1}\to C_n$, $j=0,\dots,n$ (degeneracies),
\item $\tau_n:C_n\to C_n$ (cyclic operators)   
\end{itemize}
satisfying the following relations:
\begin{align*}
   \delta_j\delta_i &= \delta_i\delta_{j-1}\quad\text{for }i<j,\cr
   \sigma_j\sigma_i &= \sigma_i\sigma_{j+1}\quad\text{for }i\leq j,\cr
   \sigma_j\delta_i &= \begin{cases}
      \delta_i\sigma_{j-1} & \text{for }i<j,\cr
      \id & \text{for }i=j\text{ or }i=j+1,\cr 
      \delta_{i-1}\sigma_j & \text{for }i>j+1,\cr
   \end{cases} \cr
   \tau_n\delta_i &= \delta_{i-1}\tau_{n-1}\quad\text{for }1\leq i\leq
   n,\qquad \tau_n\delta_0 = \delta_n,\cr
   \tau_n\sigma_i &= \sigma_{i-1}\tau_{n+1}\quad\text{for }1\leq i\leq
   n,\qquad \tau_n\sigma_0 = \sigma_n\tau_{n+1}^2,\cr
   \tau_n^{n+1} &= \id\,.
%   &\text{and relations involving the }\sigma_i
\end{align*}
Forgetting $\tau_n$ we have a {\em cosimplicial object}, and
forgetting the $\sigma_j$ a {\em pre-cocyclic object}. 

\begin{example}\label{ex:top}
A topological space $X$ gives rise to a cocyclic space by
setting $C_n(X):=X^{n+1}=X\times\cdots\times X$ ($n+1$ times) and
\begin{align*}
   \delta_i(x_0,\dots,x_{n-1}) &:= (x_0,\dots,x_i,x_i,\dots,x_{n-1})
   \quad\text{for }0\leq i\leq n-1,\cr
   \delta_n(x_0,\dots,x_{n-1}) &:= (x_0,\dots,x_{n-1},x_0),\cr
   \sigma_j(x_0,\dots,x_{n+1}) &:= (x_0,\dots,\wh{x_i},\dots,x_{n+1}),\cr
   \tau_n(x_0,\dots,x_n) &:= (x_1,\dots,x_n,x_0).
\end{align*}
For a subspace $Y\subset X$ we can define a cosimplicial space by 
$C_n(X,Y):= X^n\times Y$ and the same operations $\delta_i,\sigma_j$
(appropriately viewing elements in $Y$ as elements in $X$ via the inclusion).  
\end{example}

{\bf Cyclic objects. }
Dualizing the notion of a cocyclic object, we obtain that of a cyclic
object. 
A {\em cyclic object} in some category is a sequence of objects
$C_n$, $n\in\N_0$, with morphisms 
\begin{itemize}
\item $d_i:C_n\to C_{n-1}$, $i=0,\dots,n$ (faces),
\item $s_j:C_n\to C_{n+1}$, $j=0,\dots,n$ (degeneracies),
\item $t_n:C_n\to C_n$ (cyclic operators)   
\end{itemize}
satisfying the following relations:
\begin{align*}
   d_id_j &= d_{j-1}d_i \quad\text{for }i<j,\cr
   s_is_j &= s_{j+1}s_i\quad\text{for }i\leq j,\cr
   d_is_j &= \begin{cases}
      s_{j-1}d_i & \text{for }i<j,\cr
      \id & \text{for }i=j\text{ or }i=j+1,\cr 
      s_jd_{i-1} & \text{for }i>j+1,\cr
   \end{cases} \cr
   d_it_n &= t_{n-1}d_{i-1}\quad\text{for }1\leq i\leq n,\qquad d_0t_n=d_n, \cr
   s_it_n &= t_{n+1}s_{i-1}\quad\text{for }1\leq i\leq n,\qquad s_0t_n=t_{n+1}^2s_n,\cr
   t_n^{n+1} &= \id\,.
%   &\text{and relations involving the }s_i
\end{align*}
Forgetting $t_n$ we have a {\em simplicial object}, and
forgetting the $s_j$ a {\em pre-cyclic object}. 
Note that if $F:A\to B$ is a contravariant functor, then a cocyclic
object in the category $A$ gives rise to a cyclic object $F(A)$ in the
category $B$ and vice versa. 

The cyclic structure gives rise to the {\em extra degeneracy}
$$
   s_{n+1}:=t_{n+1}s_n:C_n\to C_{n+1}
$$
satisfying the following relations on $C_n$: %(stated incorrectly in~\cite{Loday}):
\begin{align*}
   s_is_{n+1} &= s_{n+2}s_{i-1}\quad\text{for }1\leq i\leq n+1,\qquad s_0s_{n+1}=s_{n+2}s_{n+1},\cr
   d_is_{n+1} &= s_nd_{i-1}\quad\text{for }1\leq i\leq n,\qquad
   d_{n+1}s_{n+1}=t_n,\qquad d_0s_{n+1}=\id, \cr
   s_{n+1}t_n &= t_{n+1}^2s_{n-1}\,.
\end{align*}

\begin{example}\label{ex:forms}
For a manifold $X$ denote by $C_n(\Om^*(X)):=\Om^*(X^{n+1})$ the differential
graded algebra of differential forms on the $(n+1)$-fold product of
$X$. This becomes a cyclic object in the category of differential
graded vector spaces (i.e., chain complexes) with the
operations $d_i:=\delta_i^*$, $s_j:=\sigma_j^*$, $t_n:=\tau_n^*$
induced from those of Example~\ref{ex:top}.
For a submanifold $Y\subset X$ we can define a simplicial chain
complex by $C_n(\Om^*(X),\Om^*(Y)):= \Om^*(X^n\times Y)$ and the same operations $d_i,s_j$
(appropriately viewing forms on $X$ as forms on $Y$ via restriction). Note
that $\Om^*(Y)$ is an $\Om^*(X)$-bimodule. 
\end{example}

\begin{example}\label{ex:algebras}
A differential graded algebra ({\em dga}) $A$ over $\R$ with unit $1$ gives rise to a
cyclic object in the category of differential graded $\R$-modules
by setting $C_n(A):=A^{\otimes(n+1)}$ and 
\begin{align*}
   d_i(a_0\mid\dots\mid a_n) &:= (a_0\mid\dots\mid
   a_ia_{i+1}\mid\dots\mid a_n)\quad\text{for }0\leq i\leq n-1,\cr
   d_n(a_0\mid\dots\mid a_n) &:= (-1)^{\deg a_n(\deg a_0+\dots+\deg a_{n-1})}
   (a_na_0\mid a_1\mid\dots\mid a_{n-1}),\cr 
   s_j(a_0\mid\dots\mid a_n) &:= (a_0\mid\dots\mid a_j\mid 1\mid
   a_{j+1}\mid\dots\mid a_n)\quad\text{for }0\leq i\leq n,\cr
   t_n(a_0\mid \dots\mid a_n) &:= (-1)^{\deg a_n(\deg a_0+\dots+\deg a_{n-1})}
   (a_n\mid a_0\mid\dots\mid a_{n-1}). 
\end{align*}
Here for $t_n$ we use the convention in~\cite{Loday}, which differs
the one in~\cite{Jones}. 
The extra degeneracy becomes 
$$
   s_{n+1}(a_0\mid\dots\mid a_n) = (1\mid a_0\mid\dots\mid a_n).
$$
For a differential graded $A$-bimodule $M$ we can define a simplicial
differential graded $R$-module by $C_n(A,M):= A^{\otimes n}\otimes
M$ and the same operations $d_i,s_j$ (using the bimodule structure).  
\end{example}

For the de Rham complex, the constructions in Examples~\ref{ex:forms}
and~\ref{ex:algebras} give rise to two cyclic cochain complexes
$\Om^*(X^{n+1})$ and $\Om^*(X)^{\otimes n+1}$ which we will refer to
as the {\em analytic} and {\em algebraic} versions, respectively.
They are compatible in the following sense. 

\begin{lemma}\label{lem:cross-product}
The exterior cross products
$$
   \phi_n:\Om^*(X)^{\otimes n+1}\to\Om^*(X^{n+1}),\qquad (a_0\mid\dots\mid
   a_n)\mapsto a_0\times\cdots\times a_n
$$
define a morphism of cyclic cochain complexes. 
\end{lemma}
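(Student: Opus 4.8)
The plan is to verify directly that the exterior cross product maps $\phi_n$ commute with all the structure maps of the two cyclic cochain complexes, namely the differential, the faces $d_i$, the degeneracies $s_j$, and the cyclic operators $t_n$. Since all the operations on $\Om^*(X^{n+1})$ are pulled back along the maps $\delta_i,\sigma_j,\tau_n$ of Example~\ref{ex:top}, while the operations on $\Om^*(X)^{\otimes n+1}$ are defined algebraically in Example~\ref{ex:algebras}, each identity amounts to comparing a pullback of a cross product with a cross product of the algebraically manipulated factors. The key observation I would exploit is the standard naturality of the cross product: for a map $F$ that is built coordinatewise from projections, insertions, or multiplications on the factors of $X^{n+1}$, the pullback $F^*(a_0\times\cdots\times a_n)$ can be computed factor by factor, so that $\phi_n$ intertwines $F^*$ with the corresponding algebraic operation.

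First I would check compatibility with the differential, i.e.\ that $\phi_n$ is a chain map for the de Rham differential. This follows from the Leibniz-type formula $d(a_0\times\cdots\times a_n)=\sum_i \pm\, a_0\times\cdots\times da_i\times\cdots\times a_n$, which is exactly how $d$ acts on the tensor product, with matching Koszul signs. Next I would treat the faces: for $0\leq i\leq n-1$ the map $\delta_i$ doubles the $i$-th coordinate, and pulling back along it composes the two adjacent forms via the wedge product on that coordinate, reproducing the multiplication $a_ia_{i+1}$ in the algebraic $d_i$; for $\delta_n$, which appends a copy of the first coordinate, the pullback identifies the resulting cross product with $a_na_0\times a_1\times\cdots\times a_{n-1}$ up to the stated sign. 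The degeneracies $\sigma_j$ drop a coordinate and insert the constant function $1$, matching $s_j$, and the cyclic operator $\tau_n$ permutes the factors cyclically, so its pullback produces exactly the sign and reordering appearing in $t_n$.

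The only genuinely delicate point—and the step I expect to be the main obstacle—is the careful bookkeeping of the Koszul signs, particularly for the maps $d_n$ and $t_n$ where a factor of odd degree is moved past the others. The signs in Example~\ref{ex:algebras} were chosen to make the tensor product into a cyclic object, and I must verify that the same signs emerge from the graded commutativity of the cross product when coordinates are permuted or identified; this requires tracking the Koszul rule $a\times b=(-1)^{|a||b|}\,b\times a$ through the cyclic reordering. Once the sign computations are pinned down for $d_n$ and $t_n$, the remaining relations involve no reordering of odd-degree forms and follow immediately. Assembling these verifications shows that the collection $\{\phi_n\}$ commutes with every cyclic structure map, which is precisely the assertion that $\phi_n$ is a morphism of cyclic cochain complexes.
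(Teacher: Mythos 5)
Your proposal is correct and follows essentially the same route as the paper: write the cross product as $\pi_0^*a_0\wedge\cdots\wedge\pi_n^*a_n$ and check compatibility with each structure map by naturality of pullback, with the only real work being the Koszul signs for $d_n$ and $t_n$. The paper likewise dispatches the easy relations in a sentence and carries out only the $t_n$ sign computation explicitly, which is exactly the verification you flag as the delicate step.
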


\begin{proof}
Each $\phi_n$ is clearly a chain map with respect to the exterior
derivative. Recall that 
$$
   a_0\times\cdots\times a_n = \pi_0^*a_0\wedge\cdots\wedge\pi_n^*a_n
$$
for the canonical projections $\pi_i:X_n\to X$, $i=0,\dots,n$. Note
that the $\pi_i$ and $\pi_i^*$ are compositions of the degeneracies in
Examples~\ref{ex:top} and~\ref{ex:forms}, respectively. Using this,
one deduces that $\phi$ is a map of cyclic cochain complexes. For
example, the relations
$$
   \pi_{i-1}\tau_n=\pi_i\quad\text{for }1\leq i\leq n,\quad
   \pi_n\tau_n=\pi_0 
$$
imply compatibility of $\phi$ with $t_n=\tau_n^*$:
\begin{align*}
   \phi\,t_n(a_0\mid\dots\mid a_n)
   &= (-1)^{\deg a_n(\deg a_0+\dots+\deg a_{n-1})}a_n\times
   a_0\times\cdots\times a_{n-1} \cr 
   &= (-1)^{\deg a_n(\deg a_0+\dots+\deg a_{n-1})}\pi_0^*a_n\wedge
   \pi_1^*a_0\wedge\cdots\wedge \pi_n^*a_{n-1} \cr 
   &= (-1)^{\deg a_n(\deg a_0+\dots+\deg a_{n-1})}\tau_n^*(\pi_n^*a_n\wedge
   \pi_0^*a_0\wedge\cdots\wedge \pi_{n-1}^*a_{n-1}) \cr 
   &= \tau_n^*(\pi_0^*a_0\wedge\cdots\wedge \pi_n^*a_n) \cr 
   &= t_n\phi(a_0\mid\dots\mid a_n).
\end{align*}
\end{proof}

%%%
\subsection{From cyclic cochain complexes to mixed complexes}
%%%

Consider a cyclic cochain complex $(C_n,d_i,s_j,t_n,d)$, where
$(C_n,d)$ are $\Z$-graded cochain complexes for $n\in\N_0$ with
differential of degree $+1$. Thus the operations $d_i$, $s_j$ and
$t_n$ commute with $d$ and satisfy the relations in
Section~\ref{sec:cyc}. 
We define a $\Z$-graded $\R$-vector space $(C,|\cdot|)$ by
$$
 C:=\bigoplus_{n\geq 0}C_n,\qquad |c|:=\deg(c)-n\quad\text{for }c\in C_n.
$$
We define the following operations on homogeneous elements $c\in C_n$:
\begin{equation}\label{eq:cyc-signs}
\begin{aligned}
   b'(c)&:=(-1)^{|c|+1}\sum_{i=0}^{n-1}(-1)^id_i(c)\in C_{n-1}, \cr 
   b(c) &:=(-1)^{|c|+1}\sum_{i=0}^n(-1)^id_i(c)\in C_{n-1}, \cr 
   t &:= (-1)^nt_n:C_n\to C_n, \cr
   N &:= 1+t+\cdots+t^n:C_n\to C_n, \cr
   s(c) &:= (-1)^{|c|}s_{n+1}(c)\in C_{n+1}, \cr
   B &:= (1-t)sN:C_n\to C_{n+1}. 
\end{aligned}
\end{equation}
The operations have the following degrees with respect to the grading
on $C$:
$$
   |d|=|b|=|b'|=1,\qquad |t|=|N|=0,\qquad |s|=|B|=-1. 
$$
Straightforward computations yield the following relations:
\begin{equation}\label{eq:commute}
\begin{gathered}
   \ker(1-t)=\im N,\qquad \ker N=\im(1-t),\qquad (1-t)b'=b(1-t), \cr b'N=Nb,\qquad
   (b')^2=b^2=db+bd=db'+b'd=0, \cr
   ds+sd=0, \qquad b's+sb'=1, \qquad bs+sb'=(1-t).
\end{gathered}
\end{equation}

\begin{remark}
The signs in~\eqref{eq:cyc-signs} are chosen for compatibility under
Chen's iterated integral in Section~\ref{sec:chen}. An alternative
sign convention is given in~\cite{Loday} (in the case of a cyclic module):
\begin{equation}\label{eq:cyc-signs-Loday}
\begin{aligned}
   \wt b'&:=\sum_{i=0}^{n-1}(-1)^id_i:C_n\to C_{n-1}, \cr 
   \wt b&:=\sum_{i=0}^n(-1)^id_i:C_n\to C_{n-1}, \cr 
   \wt t &:= (-1)^nt_n:C_n\to C_n, \cr
   \wt N &:= 1+t+\cdots+t^n:C_n\to C_n, \cr
   \wt s &:=s_{n+1}:C_n\to C_{n+1}, \cr
   \wt B &:= (1-\wt t)\wt s\wt N:C_n\to C_{n+1}. 
\end{aligned}
\end{equation}
Note that $\wt t=t$ and $\wt N=N$ are unchanged. In this case we need
to modify the differential $d$ to make it anticommute with $\wt b$,
$\wt b'$ and $\wt s$, so we set
%\marginpar{Still to be checked (Kai).}
$$
   \wt d(c):=(-1)^{|c|+1}d(c). 
$$
It is straightforward to check that with these signs the
relations~\eqref{eq:commute} continue to hold. Moreover, the two sign
conventions are intertwined by the chain isomorphism 
$\Phi:(C,\wt d)\longrightarrow (C,d)$ defined on homogeneous elements by
$\Phi(c) := (-1)^{|c|(|c|+1)/2}c$.
%$$
%   \Phi(c) := (-1)^{x(c)}c,\qquad x(c):=\frac{|c|(|c|+1)}{2}.
%$$
\end{remark}

Consider the space $C[[\theta,\theta^{-1}]]$ of Laurent series in a formal
variable $\theta$ of degree $|\theta|=1$, as usual understood in the graded
sense so that elements of degree $k$ are sums
$\sum_{i\in\Z}c_i\theta^i$ with $|c_i|=k-i$. We view this as a double
complex where powers of $\theta$ increase in the horizontal direction
and the columns are copies of $C$, see Figure~\ref{fig1}. We define a differential 
$$
   \delta_\theta:=\delta_\ver+\delta_\hor:C[[\theta,\theta^{-1}]]\to C[[\theta,\theta^{-1}]]
$$
of degree $1$ with
\begin{equation}\label{eq:delta-theta}
\begin{aligned}
   \delta_\ver(c\theta^n) &:= \begin{cases}
      (d+b)(c)\theta^n & \text{$n$ even}, \\
      -(d+b')(c)\theta^n & \text{$n$ odd}, \\
   \end{cases} \cr
   \delta_\hor(c\theta^n) &:= \begin{cases}
      N(c)\theta^{n+1} & \text{$n$ even}, \\
      (1-t)(c)\theta^{n+1} & \text{$n$ odd}. \\
   \end{cases}
\end{aligned}
\end{equation}

 \begin{figure}
\labellist
\small
\pinlabel* $\theta^{-2}$ at 100 193 
\pinlabel* $\theta^{-1}$ at 153 193 
\pinlabel* $\theta$ at 250 193 
\pinlabel* $\theta^{2}$ at 304 193 
\pinlabel* $N$ at 224 251
\pinlabel* $d+b$ at 216 285 
\pinlabel* $1-t$ at 170 303 
\pinlabel* $-(d+b')$ at 121 333 
\endlabellist 
\centering
  \includegraphics[width=1.0\textwidth]{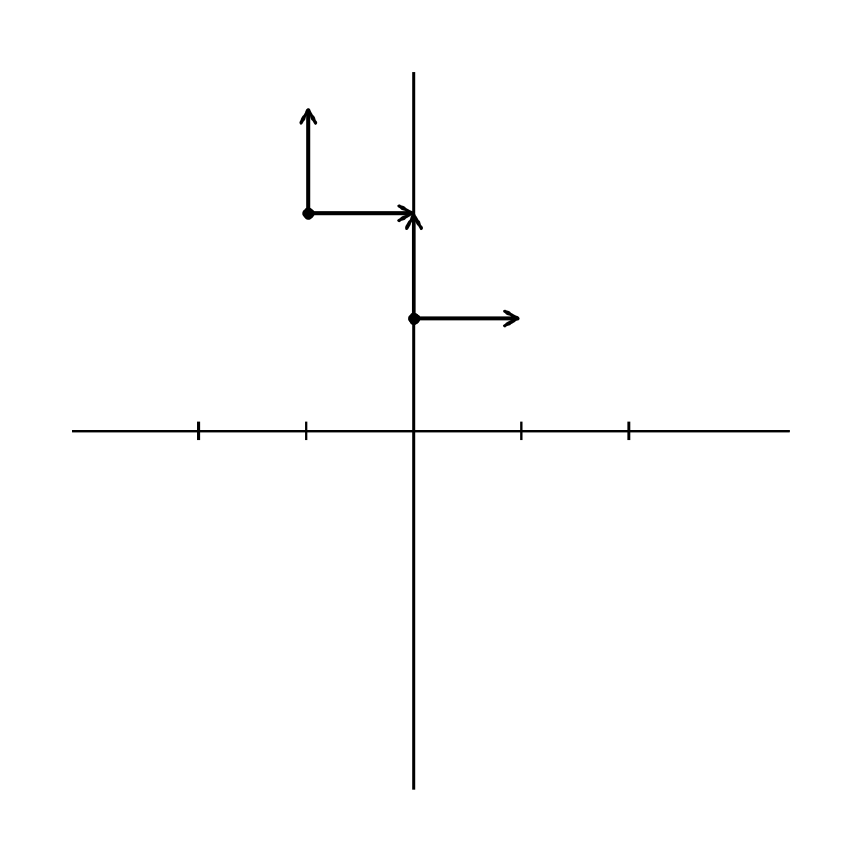}
  \caption{The double complex $C[[\theta,\theta^{-1}]]$}
  \label{fig1}
\end{figure}

The relations~\eqref{eq:commute} imply that this is indeed a double complex.
Its total differential $\delta_\theta$ induces differentials on all
the seven sub- and quotient complexes $C[[\theta,\theta^{-1}]$ etc,
defined as in~\eqref{eq:cyc-versions} with $u$ replaced by $\theta$. 
The double complex $C[[\theta,\theta^{-1}]]$ has the following two key
properties:
\begin{enumerate}
\item its odd columns are {\em contractible} with contracting homotopy $-s$, i.e., $(d+b')s+s(d+b')=\id$;
\item its rows are exact (cf.~\cite{Loday} Theorem 2.1.5).
\end{enumerate}

\begin{lemma}\label{lem:cyccomplex-dgmod}
Let $(C_n,d_i,s_j,t_n,d)$ be a cyclic cochain complex. Then 
$$
   (C,d_H:=d+b,B)
$$ 
is a mixed complex. Moreover, the chain map  
$$
   k:C[[u,u^{-1}]]\longrightarrow C[[\theta,\theta^{-1}]],\qquad
   \sum_jc_ju^j\mapsto \sum_j(c_j\theta^{2j}+sNc_j\theta^{2j+1}) 
$$
induces an isomorphism on homology as an 
$\R[u,u^{-1}]$-module, where $u$ acts on $C[[\theta,\theta^{-1}]]$ as multiplication by
$\theta^2$. Similarly for all the other versions of homology. 
Moreover, these isomorphisms fit into a commuting diagram
\begin{equation}\label{eq:u-theta}
\xymatrix{
   \cdots HC_{*-2}^{[u]} \ar[d]_\cong^{k_*} \ar[r]^{\cdot u} &
   HC_*^{[u,u^{-1}]} \ar[d]_\cong^{k_*} \ar[r]^{p_*} & HC_*^{[u^{-1}]}
   \ar[d]_\cong^{k_*} \ar[r]^{B_0} & HC_{*-1}^{[u]}\cdots \ar[d]_\cong^{k_*}\\
   \cdots HC_{*-2}^{[\theta]} \ar[r]^{\cdot\theta^2} &
   HC_*^{[\theta,\theta^{-1}]} \ar[r]^{r_*p_*} & HC_*^{[\theta^{-1}]}
   \ar[r]^{B_0} & HC_{*-1}^{[\theta]} \cdots\,
}
\end{equation}
and similarly for all other tautological and Connes exact sequences.
\end{lemma}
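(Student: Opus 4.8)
The plan is to handle the three assertions of the lemma in turn, investing the real effort in the homology isomorphism and then obtaining the diagram \eqref{eq:u-theta} by naturality. \textbf{First}, I would check that $(C,d_H,B)$ is a mixed complex directly from \eqref{eq:commute}. Here $d_H^2=(d+b)^2=d^2+(db+bd)+b^2=0$; the identity $N(1-t)=0$ (a consequence of $t^{n+1}=\id$) gives $B^2=(1-t)s\bigl(N(1-t)\bigr)sN=0$; and for $d_HB+Bd_H=0$ I would split it as $(dB+Bd)+(bB+Bb)$, the first summand vanishing because $d$ commutes with $t,N$ and anticommutes with $s$, and the second because $b(1-t)=(1-t)b'$ and $Nb=b'N$ turn it into $(1-t)(b's+sb')N=(1-t)N=0$. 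All of this is routine bookkeeping.

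The core is the second assertion. I would first verify that $k$ is a chain map by comparing the $\theta^{2j},\theta^{2j+1},\theta^{2j+2}$ components of $\delta_\theta k(cu^j)$ and $k\delta_u(cu^j)$ using \eqref{eq:delta-theta}; the even components match trivially, and the odd component reduces to the identity $(d+b')sN=N-sN(d+b)$, itself a consequence of $ds+sd=0$, $b's+sb'=1$ and $b'N=Nb$. The map is injective (its $\theta^{2j}$-component recovers the coefficient of $u^j$) and $\R[u]$-linear with $u$ acting as $\theta^2$, since $k(cu^{j+1})=\theta^2 k(cu^j)$. To see that $k$ is a quasi-isomorphism I would identify the cokernel. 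Using the image vectors $k(cu^j)=c\theta^{2j}+sNc\theta^{2j+1}$ to eliminate all even powers of $\theta$, every class in $C[[\theta,\theta^{-1}]]/\im k$ has a unique representative $\sum_j e_j\theta^{2j+1}$ supported in odd powers, and a short computation with \eqref{eq:delta-theta} shows the induced differential is $-(d+b')$ on each odd strand: the even output $(1-t)(e_j)\theta^{2j+2}$ is reduced back to $-sN(1-t)(e_j)\theta^{2j+3}=0$, again by $N(1-t)=0$. Thus the cokernel is a product of copies of an odd column $(C,-(d+b'))$, each contractible with homotopy $-s$ by key property (i) of the double complex, hence acyclic; the long exact sequence of $0\to C[[u,u^{-1}]]\xrightarrow{k}C[[\theta,\theta^{-1}]]\to\coker k\to 0$ then shows $k_*$ is an isomorphism of $\R[u,u^{-1}]$-modules.

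Next I would run the identical cokernel computation on each sub- and quotient version in \eqref{eq:cyc-versions}. Since $-s$ preserves every column, it preserves all boundedness conditions, so the cokernel is again a truncated product of contractible odd columns and $k_*$ is an isomorphism; for the two genuine quotient versions $C[u^{-1}]$ and $C[[u^{-1}]]$ one may alternatively invoke the five lemma on the defining short exact sequences, on which $k$ is a map. The one point needing care is that $\theta$ has degree $1$ while $u$ has degree $2$: the tautological sequence attached to multiplication by $\theta^2$ has natural quotient $C[\theta,\theta^{-1}]/\theta^2C[\theta]$ rather than $C[\theta^{-1}]=C[\theta,\theta^{-1}]/\theta C[\theta]$, and these differ exactly by the acyclic $\theta^1$-column; this is precisely the role of the extra quasi-isomorphism $r$ in the factorization $r_*p_*$ in the bottom row of \eqref{eq:u-theta}.

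Finally, for the diagram \eqref{eq:u-theta} I would argue by naturality. The squares with $\cdot u$ versus $\cdot\theta^2$ commute by $\R[u]$-linearity; the squares with $p_*$ versus $r_*p_*$ commute because $k$ is a chain map compatible with the defining sub/quotient short exact sequences up to the identification $r$ above; and the squares with the connecting maps $B_0$ commute by naturality of the connecting homomorphism in the snake lemma, once $k$ is recognized as a morphism of the underlying short exact sequences (compare \eqref{eq:D0}). \textbf{The hard part} is not any single step but the uniform bookkeeping: coaxing the signs in \eqref{eq:delta-theta} into the chain-map identity, and matching the $\theta$-picture maps $r,p,B_0$ to their $u$-counterparts across all seven versions simultaneously. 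The cleanest route is to prove cokernel-acyclicity once for the total complex and then let functoriality of the snake lemma propagate commutativity through every tautological and Connes sequence at once.
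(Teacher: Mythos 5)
Your proposal is correct and follows essentially the same route as the paper: direct verification of the mixed-complex axioms and the chain-map identity from \eqref{eq:commute}, the quasi-isomorphism via contractibility of the odd columns (your explicit cokernel computation is just a self-contained unpacking of the citation to Loday's Lemma~2.1.6), and the diagram \eqref{eq:u-theta} via the commuting diagram of short exact sequences with the extra map $r$ killing the acyclic $\theta$-column. In particular you correctly identify the one delicate point, namely that $C[\theta,\theta^{-1}]/\theta^2C[\theta]=\theta C[\theta^{-1}]$ differs from $C[\theta^{-1}]$ by that contractible column, which is exactly how the paper introduces $r$ and concludes via $r_*q_*=k_*$.
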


\begin{proof}
The proof is an easy adaptation of the arguments in~\cite[Section
  2.1]{Loday}: that $d+b$ and $B$ are anticommuting differentials and $k$ is a
chain map follows by direct computation based on~\eqref{eq:commute},
and that $k$ induces an isomorphism on homology follows from
contractibility of the odd columns of the double complex
$C[[\theta,\theta^{-1}]]$ and~\cite[Lemma~2.1.6]{Loday}. 
%As a sample computation:
%$$
%bB=b(1-t)sN=(1-t)b'sN=(1-t)(-sb'+1)N=-(1-t)sb'N
%$$
%$$
%=-(1-t)sb'N=-(1-t)sNb=-Nb.
%$$
To derive the diagram~\eqref{eq:u-theta}, consider the commuting
diagram of short exact sequences 
$$
\xymatrix{
   0 \ar[r] & uC[u] \ar[d]^k \ar[r] & C[u,u^{-1}] \ar[d]^k \ar[r]^p &
   C[u^{-1}] \ar[d]^q \ar[r] & 0 \\
   0 \ar[r] & \theta^2C[\theta] \ar[r] & C[\theta,\theta^{-1}] \ar[r]^p &
   \theta C[\theta^{-1}] \ar[d]^r \ar[r] & 0 \\
   & & & C[\theta^{-1}],
}
$$
where $q$ is the induced map between the quotients and $r$ is the map
dividing out the $\theta$-column. Since that column is
contractible, the induced map $r_*$ on homology is an isomorphism and
we obtain on homology the diagram
$$
\xymatrix{
   \cdots HC_{*-2}^{[u]} \ar[d]_\cong^{k_*} \ar[r]^{\cdot u} &
   HC_*^{[u,u^{-1}]} \ar[d]_\cong^{k_*} \ar[r]^{p_*} & HC_{*}^{[u^{-1}]}
   \ar[d]_\cong^{q_*} \ar[r]^{B_0} & HC_{*-1}^{[u]}\cdots \ar[d]_\cong^{k_*}\\
   \cdots HC_{*-2}^{[\theta]} \ar[r]^{\cdot\theta^2} & HC_*^{[\theta,\theta^{-1}]}
   \ar[r]^{p_*} & H_*(\theta C[\theta^{-1}]) \ar[d]_\cong^{r_*}
   \ar[r] & HC_{*-1}^{[\theta]}\cdots \\
   & & HC_{*}^{[\theta^{-1}]} \ar[ru]_{B_0}
}
$$
Using $r_*q_*=k_*$ we obtain~\eqref{eq:u-theta}. 
\end{proof}

%The key advantage of the double complex $C[[\theta,\theta^{-1}]]$ is that its rows are exact.

The homology $H(C,d_H:=d+b)$ is the {\em Hochschild homology}
and the homologies $HC^{[[u,u^{-1}]]}$ etc are the various
versions of {\em cyclic homology} of the cyclic cochain complex. 

{\bf Connes' version of cyclic homology. }
Due to the relations~\eqref{eq:commute} we have short exact sequences
of cochain complexes 
\begin{gather*}
   0 \longrightarrow \bigl(\im(1-t),d+b\bigr) \longrightarrow (C,d+b) \longrightarrow
   \bigl(C/\im(1-t),d+b\bigr) \longrightarrow 0, \cr
   0 \longrightarrow (\im N,d+b') \longrightarrow (C,d+b') \longrightarrow
   (C/\im N,d+b') \longrightarrow 0. 
\end{gather*}
The first one induces a long exact sequence
$$
   \cdots H_*\bigl(\im(1-t),d+b\bigr) \longrightarrow H_*(C,d+b) \longrightarrow
   H_*\bigl(C/\im(1-t),d+b\bigr) \longrightarrow H_{*+1}\bigl(\im(1-t),d+b\bigr) \cdots
$$
and the second one, by acyclicity of $(C,d+b')$, an isomorphism
$$
   H_*(C/\im N,d+b')\cong H_{*+1}(\im N,d+b').
$$ 
Moreover, the chain isomorphisms
$N:\bigl(C/\im(1-t),d+b\bigr)\stackrel{\cong}\longrightarrow (\im
N,d+b')$ and
$1-t:(C/\im N,d+b')\stackrel{\cong}\longrightarrow \bigl(\im(1-t),d+b\bigr)$
induce isomorphisms 
\begin{gather*}
   N:H_*\bigl(C/\im(1-t),d+b\bigr)\stackrel{\cong}\longrightarrow H_*(\im N,d+b'),\cr
   1-t:H_*(C/\im N,d+b')\stackrel{\cong}\longrightarrow H_*\bigl(\im(1-t),d+b\bigr).
\end{gather*}
\begin{definition}
{\em Connes' version of cyclic homology} of the cyclic cochain complex
$(C_n,d_i,s_j,t_n,d)$ is
\begin{align*}
   HC^\lambda_* &:= H_{*+1}\bigl(C/\im(1-t),d+b\bigr)\cong H_{*+1}(\im N,d+b')\cr
   &\ \cong H_{*}(C/\im N,d+b') \cong H_{*}\bigl(\im(1-t),d+b\bigr).
\end{align*}
\end{definition}

The following lemma identifies Connes' version of cyclic homology
with two of the other versions. 

\begin{lemma}\label{lem:cyccomplex-computation}
Let $(C_n,d_i,s_j,t_n,d)$ be a cyclic cochain complex.
Then $HC_*^{[u,u^{-1}]]}=HC_*^{[\theta,\theta^{-1}]]}=0$ and we have a commuting diagram 
\begin{equation}\label{eq:cyclic-cochain}
\xymatrix{
   HC_*^{[[u^{-1}]]} \ar[d]_{\cong}^{k_*} \ar[r]^{B_0}_{\cong} & HC_{*-1}^{[u]} \ar[d]_{\cong}^{k_*}\\
   HC_*^{[[\theta^{-1}]]} \ar[d]_{\cong}^{e_*} \ar[r]^{B_0}_{\cong} & HC_{*-1}^{[\theta]} \\
   H_*\bigl(C/\im(1-t),d+b\bigr) \ar[r]^{N_*}_{\cong} & H_{*}(\im
   N,d+b') \ar[u]^{\cong}_{((1-t)s)_*}\,
}
\end{equation}
where the upper square arises from~\eqref{eq:u-theta} and
$e:C[[\theta^{-1}]]\to C/\im(1-t)$ is the projection onto the $0$-th column.
\end{lemma}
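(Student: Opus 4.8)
The plan is to prove the two vanishing statements first, and then to build the diagram from the outside in: the vanishing forces the top row to be an isomorphism, the bottom map $e_*$ is the standard identification of the cyclic bicomplex with Connes' complex, and the remaining map $((1-t)s)_*$ is obtained by a two-out-of-three argument once the squares are shown to commute.

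\emph{Vanishing.} Since $k_*$ is an isomorphism by Lemma~\ref{lem:cyccomplex-dgmod}, it suffices to prove $HC^*_{[\theta,\theta^{-1}]]}=0$. The complex $C[\theta,\theta^{-1}]]$ is bounded above in the $\theta$-grading (polynomial in $\theta$) and complete in the $\theta^{-1}$-direction (power series in $\theta^{-1}$). For a cocycle $z$ I would solve $\delta_\theta w=z$ by descending induction on the $\theta$-power: the horizontal differential of the leading term of $z$ vanishes, so by exactness of the rows (property~(ii)) that term lies in the image of $\delta_\hor$; subtracting a horizontal primitive together with its vertical correction lowers the top $\theta$-power, and the induction proceeds downward. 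It is well-founded because $z$ is bounded above, and the infinite descent converges because $C[\theta,\theta^{-1}]]$ is complete downward. Equivalently, filtering by the internal degree $|c|$ makes $\delta_\hor$ the associated graded differential, so that $E_1$ is the (bounded-above, hence exact) row homology, which vanishes; completeness of this filtration then gives $H^*=0$. By $k_*$ this also yields $HC^*_{[u,u^{-1}]]}=0$.

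\emph{The top row.} Using the identity $C[[u^{-1}]]=C[u,u^{-1}]]/uC[u]$ from~\eqref{eq:cyc-versions}, I would consider the short exact sequence $0\to uC[u]\to C[u,u^{-1}]]\to C[[u^{-1}]]\to 0$, noting that $uC[u]\cong C[u]$ with a degree shift of $2$. Its long exact sequence together with $HC^*_{[u,u^{-1}]]}=0$ shows that the connecting homomorphism $HC^*_{[[u^{-1}]]}\to H^{*+1}(uC[u])\cong HC^{*-1}_{[u]}$ is an isomorphism, and by the factorisation~\eqref{eq:D0} (with $D=B$) this connecting map is exactly $B_0$. Transporting along the vertical isomorphisms $k_*$, and using that the upper square commutes (both from Lemma~\ref{lem:cyccomplex-dgmod}), shows that the middle $B_0$ is an isomorphism and that the upper square of~\eqref{eq:cyclic-cochain} commutes.

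\emph{The bottom row and conclusion.} The map $N_*$ is one of the chain isomorphisms recorded just before the definition of $HC^*_\lambda$. For $e_*$ I would invoke the standard fact that the bicomplex $C[[\theta^{-1}]]$ computes Connes' cyclic homology: filtering by internal degree, the rows are bounded above at the $0$-th column, so the row homology is concentrated there and equals $C/\im(1-t)$ with induced differential $d+b$, the edge homomorphism being precisely $e$. This is the cochain analogue of~\cite[Thm.~2.1.5]{Loday} and again uses contractibility of the odd columns (property~(i)). Finally a direct computation with the relations~\eqref{eq:commute}, $B=(1-t)sN$ and $NB=0$ shows $((1-t)s)_*\circ N_*\circ e_*=B_0$, i.e.\ the lower square commutes; since the other three maps in this relation are now known to be isomorphisms, so is $((1-t)s)_*$.

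\emph{Main obstacle.} The only genuine difficulty lies in the two places where a completed (power-series) direction occurs, namely the acyclicity of $C[\theta,\theta^{-1}]]$ and the quasi-isomorphism $e$. In both cases the associated graded (row) homology is under control, but one must justify that passing to homology commutes with the $\theta^{-1}$-adic completion. I expect to handle this either by the explicit descending-induction argument above, or by a complete-convergence theorem for the internal-degree filtration (which is complete, Hausdorff, exhaustive, with vanishing $E_1$); everything else is bookkeeping with~\eqref{eq:commute}.
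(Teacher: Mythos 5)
Your proof is correct and follows essentially the same route as the paper: a zigzag argument over the exact rows for the vanishing of $HC^*_{[\theta,\theta^{-1}]]}$ and for the quasi-isomorphism $e$, the tautological exact sequence for the isomorphism $B_0$, the definition $B=(1-t)sN$ for commutativity of the lower square, and a two-out-of-three argument for $((1-t)s)_*$. (One cosmetic slip: the identification of $e$ as a quasi-isomorphism rests on exactness of the rows, i.e.\ property (ii), not on contractibility of the odd columns.)
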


\begin{proof}
Consider the double complex with exact rows $(C[\theta,\theta^{-1}]],\delta_\theta)$. 
Standard zigzag arguments as in~\cite{Loday} (see also the proof of
Lemma~\ref{lem:our-iso} below) show that its total homology vanishes
and the map $e:C[[\theta^{-1}]]\to C/\im(1-t)$ induces an isomorphism
on homology. In view of~\eqref{eq:u-theta} this yields the
diagram~\eqref{eq:cyclic-cochain}, where commutativity of the lower
square follows from the definition of $B$. The maps $B_0$ are isomorphism
because the third terms in the corresponding tautological exact sequences vanish. Since $N_*$
is an isomorphism, this implies that the map $((1-t)s)_*$ is an isomorphism as well.
It is important to note, however, that these arguments fail for other
versions of cyclic homology. 
\end{proof}

The following refinement of Lemma~\ref{lem:cyccomplex-computation}
will be crucial in the following. 

\begin{lemma}\label{lem:our-iso}
Let $(C_n,d_i,s_j,t_n,d)$ be a cyclic cochain complex.  
%such that for each $n$ the degrees in $C_n$ are bounded from above. 
Then the canonical inclusion $\phi:C[\theta^{-1}]\to C[[\theta^{-1}]]$
induces an isomorphism on homology
\begin{equation*}
   \phi_*:HC_*^{[\theta^{-1}]}\stackrel{\cong}\longrightarrow HC_*^{[[\theta^{-1}]]}. 
\end{equation*}
The same holds true for the inclusions $C[u^{-1}]\to C[[u^{-1}]]$,
$C[u,u^{-1}]\to C[u,u^{-1}]]$, and $C[[u,u^{-1}]\to C[[u,u^{-1}]]$.
\end{lemma}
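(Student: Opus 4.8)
The plan is to realize the inclusion as part of a short exact sequence and show that the cokernel is acyclic. Writing $Q:=C[[\theta^{-1}]]/C[\theta^{-1}]$, the sequence $0\to C[\theta^{-1}]\xrightarrow{\phi}C[[\theta^{-1}]]\to Q\to 0$ is exact, and in the resulting long exact sequence the maps induced by $\phi$ are all isomorphisms if and only if $H^*(Q,\delta_\theta)=0$. So it suffices to prove the latter. I would carry this out in detail for $C[\theta^{-1}]\to C[[\theta^{-1}]]$; the inclusion $C[u^{-1}]\to C[[u^{-1}]]$ is identical (or can be transported through the isomorphism $k_*$ of Lemma~\ref{lem:cyccomplex-dgmod}), and the two two-sided inclusions differ only in the $u$-direction, which is left untouched by the completion and plays no role in the argument.

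First I would exhibit $Q$ as a filtered colimit. Let $G_p\subset C[[\theta^{-1}]]$ be the subcomplex spanned by the columns $\theta^0,\dots,\theta^{-p}$; these are genuine subcomplexes because $\delta_\hor$ raises the power of $\theta$, so nothing escapes to the left. Put $Q_p:=C[[\theta^{-1}]]/G_p$. Then $C[\theta^{-1}]=\bigcup_pG_p$, the quotient maps $Q_p\twoheadrightarrow Q_{p+1}$ form a direct system with $Q=\varinjlim_pQ_p$, and hence $H^*(Q)=\varinjlim_pH^*(Q_p)$. Each $Q_p$ consists of the columns $\theta^k$ with $k\le -p-1$, and for $p$ odd it is isomorphic as a complex to $C[[\theta^{-1}]]$ via multiplication by $\theta^{-(p+1)}$ (an even shift, which preserves the even/odd column pattern). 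Consequently the projection $e_p\colon Q_p\to C/\im(1-t)$ onto the boundary column $\theta^{-p-1}$ induces, by the shifted form of Lemma~\ref{lem:cyccomplex-computation}, an isomorphism $e_{p*}\colon H^*(Q_p)\xrightarrow{\cong}H^*(C/\im(1-t),d+b)$.

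The heart of the argument is to show that every class dies in the colimit. Fix an odd $p$ and a class $\alpha=[\zeta]\in H^*(Q_p)$, represented by a $\delta_\theta$-cycle $\zeta=\sum_{k\le -p-1}\zeta_k\theta^k$. For odd $p'>p$ the image of $\alpha$ in $H^*(Q_{p'})$ is detected by $e_{p'*}$ as the class $[\zeta_{-p'-1}]\in H^*(C/\im(1-t),d+b)$. Now the cycle equation of $\zeta$ in the odd column $\theta^{-p'}$ reads $-(d+b')\zeta_{-p'}+N\zeta_{-p'-1}=0$, that is, $N\zeta_{-p'-1}=(d+b')\zeta_{-p'}$ lies in the image of $d+b'$. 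Hence $[N\zeta_{-p'-1}]=0$ in $H^*(\im N,d+b')$, and since $N_*\colon H^*(C/\im(1-t),d+b)\xrightarrow{\cong}H^*(\im N,d+b')$ is the isomorphism from the discussion of Connes' version, we obtain $[\zeta_{-p'-1}]=0$. Thus $\alpha$ maps to zero in $H^*(Q_{p'})$, and as the odd $p'$ are cofinal we conclude $H^*(Q)=\varinjlim_pH^*(Q_p)=0$.

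The step I expect to be the main obstacle is exactly the vanishing of the connecting maps in this colimit. A naive attempt to correct a power-series cycle to a polynomial one by a leftward recursion $\delta_\ver w_k+\delta_\hor w_{k-1}=\zeta_k$ is solvable at each individual step—using exactness of the rows together with $\delta_\hor^2=0$ and $\delta_\ver\delta_\hor=-\delta_\hor\delta_\ver$—but it cannot be seeded at any finite right-hand column, since the homology is concentrated precisely at the boundary. The colimit formulation, together with the identity $N\zeta_{-p'-1}=(d+b')\zeta_{-p'}$ and the isomorphism $N_*$, is what circumvents this seeding obstruction: passing deeper trades the boundary class for one on which $N$ acts by a $d+b'$-boundary. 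Once $H^*(Q)=0$ is established the long exact sequence yields the claim, and the cokernels of the remaining three inclusions are shown acyclic by the same computation applied to the $\theta^{-1}$- (resp. $u^{-1}$-) direction.
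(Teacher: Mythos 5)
Your reduction to showing that the quotient $Q=C[[\theta^{-1}]]/C[\theta^{-1}]$ is acyclic, and the presentation of $Q$ as the colimit of the shifted copies $Q_p\cong C[[\theta^{-1}]]$ with $H^*(Q_p)\cong H^*(C/\im(1-t),d+b)$, are both correct and a clean way to package the problem. But the crucial step --- that a class dies under the transition map $H^*(Q_p)\to H^*(Q_{p'})$ --- contains a genuine error. From $N\zeta_{-p'-1}=(d+b')\zeta_{-p'}$ you conclude $[N\zeta_{-p'-1}]=0$ in $H^*(\im N,d+b')$ because the element ``lies in the image of $d+b'$''. This conflates $\im(d+b')\cap\im N$ with $(d+b')(\im N)$: the boundaries of the subcomplex $(\im N,d+b')$ are only the elements $(d+b')w$ with $w\in\im N$, and $\zeta_{-p'}$ need not lie in $\im N$. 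In fact, since $(C,d+b')$ is contractible, the connecting homomorphism of $0\to\im N\to C\to C/\im N\to 0$ is an isomorphism $H^*(C/\im N,d+b')\cong H^{*+1}(\im N,d+b')$, and it sends $[\bar y]$ to $[(d+b')y]$; so elements of $\im(d+b')\cap\im N$ represent \emph{every} class of $H^{*+1}(\im N,d+b')$, not just the zero class. Concretely, for the trivial dga $A=\R$ of Example~\ref{ex:point-deRham} one has $b'|_{\im N}=0$, so $H(\im N,b')=\im N={\rm span}\{1^n:n\ \text{odd}\}$, while $1^m=\pm b'(1^{m+1})\in\im b'$ for every odd $m$; these classes are nonzero. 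Correspondingly, your intermediate assertion is quantitatively false: the transition map $H^*(Q_p)\to H^*(Q_{p+2})$ is essentially Connes' periodicity operator on $HC^*_\lambda$, which in this example sends $[1^m]$ to a nonzero multiple of $[1^{m-2}]$ and is therefore not zero; a given class only dies after a number of steps depending on the class.

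So the content of the lemma is concentrated exactly where your argument breaks: one must show that each class dies after \emph{finitely many} (class-dependent) steps, which is equivalent to the polynomiality of resolutions and is not a formal consequence of the row-exactness you invoke. The paper's proof supplies the missing ingredient by a word-length (weight) argument: choosing the horizontal preimages so that $N^{-1}$ acts on $\im N$ as multiplication by $(1+n)^{-1}$ --- hence preserves $d$-closed elements --- one gets $dN^{-1}d=0$, and every remaining term of the two-step zigzag $\delta_\hor^{-1}\delta_\ver\delta_\hor^{-1}\delta_\ver$ involves $b$ or $b'$ and strictly lowers the word length. This forces the resolution to terminate after at most (top word length) many steps, which is precisely the locally nilpotent behaviour your colimit needs. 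Your framework would go through verbatim once this finiteness statement is substituted for the faulty vanishing claim.
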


\begin{proof}
Consider the sequence of chain maps
\begin{equation}
\begin{CD} 
   (C[\theta^{-1}],\delta_\theta) @>{\phi}>> (C[[\theta^{-1}]],\delta_\theta) @>e>> (C/\im(1-t),d+b).
\end{CD}
\end{equation}
%\begin{equation}
%\begin{CD} 
%   (C[\theta^{-1}],\delta_\theta) @>{\phi}>> (C[[\theta^{-1}]],\delta_\theta) @>p>> (C/\im(1-t),d+b) \\     
%   @AAA @AAA\\            
%   (C[u^{-1}],\delta_u) @>>> (C[[u^{-1}]],\delta_u) \;. &         
%\end{CD}
%\end{equation}
Since $e$ induces an isomorphism on homology by
Lemma~\ref{lem:cyccomplex-computation}, it suffices to show 
that the composition $e\phi$ induces an isomorphism $(e\phi)_*$ on homology. 
%The proof consists in a closer inspection of
%resolutions used to prove that $p$ induces an isomorphism on homology in Lemma~\ref{lem:gen-zigzag}.

We call an element $c\in\theta^{-1}C[[\theta^{-1}]]$ a {\em
  resolution} of an element $x\in C$ if $\delta_\theta c=x$ (in
particular, all nonzero powers of $\theta^{-1}$ cancel). Since the
rows of $C[[\theta^{-1}]]$ are exact, an element $x\in C$ admits a
resolution if and only if $(d+b)x=0$ and $x\in\im(1-t)=\ker N$. 

{\em Claim. }Every $x\in C$ with $(d+b)x=0$ and $x\in\im(1-t)$ admits
a resolution $c$ which is {\em polynomial} in $\theta^{-1}$,
i.e.~$c\in \theta^{-1}C[\theta^{-1}]$. 

Let us assume the claim for the moment and finish the proof. 
We begin with surjectivity of $(e\phi)_*$. 
Let $y\in C$ represent a closed element in $(C/\im(1-t),d+b)$. 
Then $(d+b)y$ is $(d+b)$-closed and lies in $\im(1-t)$, so by the
claim it has a polynomial resolution $c\in\theta^{-1}C[\theta^{-1}]$. 
Then 
$$
   \delta_\theta(y-c)=Ny\theta + (d+b)y - \delta_\theta c =
   Ny\theta\equiv 0\in C[\theta^{-1}]=C[\theta,\theta^{-1}/\theta C[\theta],
$$
so the element $y-c$ is closed in $C[\theta^{-1}]$. Since $e\phi
(y-c)=y$, this proves surjectivity of $(e\phi)_*$. 

Next we show that $(e\phi)_*$ is injective. Since $e_*$ is an
isomorphism, it suffices to show that $\phi_*$ is injective. 
Let a closed element $c=\sum_{k=0}^nc_{-k}\theta^{-k}\in C[\theta^{-1}]$ represent an 
exact element in $C[[\theta^{-1}]]$, i.e., $\delta_\theta a=c$ for
some $a = \sum_{k=0}^\infty a_{-k}\theta^{-k}\in C[[\theta^{-1}]]$. 
We may assume that $n$ is odd by allowing the last coefficient $c_{-n}$ to be zero.
Then $(d+b)a_{-n-1}$ is $(d+b)$-closed and lies in $\im(1-t)$, so by the
claim it has a polynomial resolution $z = \sum_{k=1}^Nz_{-k}\theta^{-1}$. 
Now
$$
   \tilde a := \sum_{k=0}^{n+1}a_{-k}\theta^{-k} - z\theta^{-n-1}
%   = \sum_{k=0}^{n+1}a_{-k}\theta^{-k} - \sum_{k=1}^Nz_{-k}\theta^{-n-1-k} 
   \in C[\theta^{-1}]
$$
satisfies 
$$
   \delta_\theta\tilde a = c + (d+b)a_{-n-1}\theta^{-n-1} -
   \delta_\theta z\,\theta^{-n-1} = c,
$$
so $c$ is exact in $C[\theta^{-1}]$ and injectivity is proved.

It remains to prove the claim. 
By a slight abuse of notation, we will denote by $\delta_\ver$ and
$\delta_\hor$ the maps $C\to C$ defined as in~\eqref{eq:delta-theta}
but without the factors of $\theta$. 
Recall the formula $c=\Sigma_{k=1}^\infty c_{-k}\theta^{-k}$ for a
resolution of $x\in C$. We will write 
$$
   c_{-1} = \delta_\hor^{-1}(x)\qquad\text{and}\qquad c_{-(k+1)} =
   -\delta_\hor^{-1}\delta_\ver c_{-k}\,\, \text{for}\,\, k\ge 1, 
$$ 
where by $\delta_\hor^{-1}$ we mean some preimage under
$\delta_\hor$. The main point of the proof is a certain special choice
of the preimage. To explain it, recall that $C=\bigoplus_{n=0}^\infty
C_n$. The maps $t,N,d$ preserve $C_n$, whereas $b$ and $b'$ map
$C_{n+1}$ to $C_n$. In particular, $\delta_\hor$ preserves $C_n$. 
%This allows us to stipulate that  
%$$
%\delta_{hor}^{-1}\left(\bigoplus_{j\in I}C_j\right)\subset \bigoplus_{j\in I}C_j
%$$
%for any finite subset $I$ of $\N_0$. In other words, when we take
%preimage of $c$, we do not make it unnecessary large, by adding 
%closed elements in irrelevant summands $C_j$. 
Let us define the {\em weight} $w(c)$ of a nonzero element $c\in C$ as the
smallest integer $w$ such that $c\in\bigoplus_{n=0}^wC_n$, and set
$w(0):=-1$. By the preceding discussion we can choose the preimages
under $\delta_\hor$ such that 
\begin{equation}\label{eq:nonstrict}
   w(c_{-k-1})\le w(c_{-k})\qquad\text{for all }k\geq 1.
\end{equation}
This condition can be improved further. For this, note that 
the map $N:C_n\rightarrow C_n$ acts on its image as multiplication  
by $1+n$. Therefore $N^{-1}$ on the image of $N$ can be taken to be
multiplication by $(1+n)^{-1}$. In particular, it maps $d$-closed
elements to $d$-closed ones. From this we can deduce the following
strict (!) inequality: 
\begin{equation}\label{eq:strict}
   w(c_{-k-2})< w(c_{-k})\quad \text{for $k$ odd and }c_{-k}\ne 0.
\end{equation}
Indeed, for $k$ odd we obtain from~\eqref{eq:nonstrict}
$$
   w(c_{-k-2}) = w(\delta_\hor^{-1}\delta_\ver\delta_\hor^{-1}\delta_\ver c_{-k})
   \le w(\delta_\ver\delta_\hor^{-1}\delta_\ver c_{-k}) 
   = w\bigl((d+b)N^{-1}(d+b')c_{-k}\bigr). %< w(c_{-k})
$$
To compute the last term we write out
$$
   (d+b)N^{-1}(d+b')c_{-k}=dN^{-1}dc_{-k}+dN^{-1}b'c_{-k}+bN^{-1}dc_{-k}+bN^{-1}b'c_{-k},
$$
where the first summand $dN^{-1}dc_k=0$ vanishes since $N^{-1}$ maps
closed elements to closed ones. All other summands have weight
strictly less than that of $c_{-k}$ due to the presence of either $b$ or
$b'$ (or both), and inequality~\eqref{eq:strict} follows.  

Inequalities~\eqref{eq:nonstrict} and~\eqref{eq:strict} imply that
the weight $w(c_{-k})$ strictly decreases each time $k$ increases by $2$,
hence $c_{-k}=0$ for all sufficiently large $k$ and the resolution is polynomial.  
This proves the claim and thus the isomorphism
$\phi_*:HC_*^{[\theta^{-1}]}\stackrel{\cong}\longrightarrow HC_*^{[[\theta^{-1}]]}$.  
The statement about the inclusion $C[u^{-1}]\to C[[u^{-1}]]$ follows
from this and Lemma~\ref{lem:cyccomplex-dgmod}, and the statement
about the other two inclusions follows from the tautological exact
sequence in Proposition~\ref{prop:gysin} and the 5-lemma. 
\end{proof}

The preceding two lemmas are summarized in

\begin{cor}\label{cor:cyccochain}
For a cyclic cochain complex $(C_n,d_i,s_j,t_n,d)$ the different
versions of cyclic homology are given by
\begin{gather*}
   HC_*^{[u^{-1}]}\cong HC_*^{[[u^{-1}]]}\cong HC_{*-1}^\lambda\cong HC_{*-1}^{[u]},\qquad
   HC_*^{[[u]]}, \cr
   HC_*^{[[u,u^{-1}]}\cong HC_*^{[[u,u^{-1}]]},\qquad
   HC_*^{[u,u^{-1}]} = HC_*^{[u,u^{-1}]]} = 0.
\end{gather*}
\end{cor}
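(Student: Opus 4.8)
The plan is to read off the corollary from the three preceding lemmas; no new computation is required, only an organization of the eight versions into the four groups displayed in the statement, each handled by quoting the relevant isomorphism.

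First I would treat the nilpotent group. Lemma~\ref{lem:our-iso} gives that the canonical inclusion induces an isomorphism $HC^*_{[u^{-1}]}\cong HC^*_{[[u^{-1}]]}$. Next, the left column of the diagram in Lemma~\ref{lem:cyccomplex-computation} identifies $HC^*_{[[u^{-1}]]}$ with $H^*\bigl(C/\im(1-t),d+b\bigr)$ through the composite $e_*k_*$ of isomorphisms, and by the definition of Connes' cyclic homology (where $HC^{*-1}_\lambda=H^*(C/\im(1-t),d+b)$) this group is exactly $HC^{*-1}_\lambda$. Finally, the top row of the same diagram shows that $B_0\colon HC^*_{[[u^{-1}]]}\to HC^{*-1}_{[u]}$ is an isomorphism. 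Chaining these four identifications yields the first displayed line.

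The remaining groups are immediate. For the Goodwillie group, the third assertion of Lemma~\ref{lem:our-iso} states that the inclusion $C[[u,u^{-1}]\to C[[u,u^{-1}]]$ induces $HC^*_{[[u,u^{-1}]}\cong HC^*_{[[u,u^{-1}]]}$. For the vanishing group, Lemma~\ref{lem:cyccomplex-computation} gives $HC^*_{[u,u^{-1}]]}=0$, while Lemma~\ref{lem:our-iso} supplies $HC^*_{[u,u^{-1}]}\cong HC^*_{[u,u^{-1}]]}$, so both are zero. The Borel version $HC^*_{[[u]]}$ stands alone in the statement precisely because none of the lemmas relates it to the others, so it calls for no argument.

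I do not expect any genuine obstacle, since every ingredient is already in place; the proof is pure bookkeeping. The one point that needs care is the degree shift in Connes' version: since the definition sets $HC^*_\lambda=H^{*+1}(C/\im(1-t),d+b)$, the identification of $HC^*_{[[u^{-1}]]}$ with $H^*(C/\im(1-t),d+b)$ must be recorded as $HC^{*-1}_\lambda$, which is consistent with the shift already appearing in $HC^*_{[[u^{-1}]]}\cong HC^{*-1}_{[u]}$.
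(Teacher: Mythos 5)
Your proposal is correct and is exactly the paper's argument: the corollary is stated there as a summary of Lemmas~\ref{lem:cyccomplex-computation} and~\ref{lem:our-iso}, and your bookkeeping (including the degree shift $H^*(C/\im(1-t),d+b)=HC^{*-1}_\lambda$ coming from the definition $HC^*_\lambda=H^{*+1}(C/\im(1-t),d+b)$) matches how those lemmas combine.
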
 

{\bf Behaviour under maps of cyclic cochain complexes. }
According to Proposition~\ref{prop:quism-cyclicchains}, only the $3$
classical versions of cyclic homology are invariant under
quasi-isomorphisms of mixed complexes. By contrast, with respect to 
quasi-isomorphisms of cyclic cochain complexes we have
 
\begin{prop}\label{prop:quism-cyclicchains}
Hochschild homology and all versions of cyclic homology are invariant
under quasi-isomorphisms of cyclic cochain complexes, i.e.~morphisms
inducing an isomorphism on $d$-cohomology. 
\end{prop}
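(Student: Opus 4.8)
The plan is to reduce the whole statement to a single claim about Hochschild homology and then to feed it into the results already obtained for cyclic cochain complexes. A morphism of cyclic cochain complexes is a family $f=(f_n)$ commuting with $d_i,s_j,t_n$ and $d$, hence with the derived operators $b,b',N,s,t,B$; in particular each $f_n$ preserves the simplicial degree $n$, so $f$ is a morphism of the associated mixed complexes $(C,d+b,B)\to(\wt C,\wt d+\wt b,\wt B)$. The key claim I would first establish is $(\star)$: that $f$ induces an isomorphism $H^*(C,d+b)\to H^*(\wt C,\wt d+\wt b)$ on Hochschild homology.

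Granting $(\star)$, the map $f$ is by definition a quasi-isomorphism of mixed complexes, so Proposition~\ref{prop:quism-invariance} immediately shows that $f_*$ is an isomorphism on the three classical versions $HC^*_{[[u]]}$, $HC^*_{[[u,u^{-1}]}$ and $HC^*_{[u^{-1}]}$. For the remaining five versions I would invoke Corollary~\ref{cor:cyccochain}: for a cyclic cochain complex each of them is either trivial (the $[u,u^{-1}]$ and $[u,u^{-1}]]$ versions) or isomorphic, up to a degree shift, to one of the three classical versions. The point is that the chains of isomorphisms in the corollary are induced by the \emph{natural} chain maps $\phi$, $k$, $e$, $B_0$, $N$ and $(1-t)s$ of Lemmas~\ref{lem:cyccomplex-dgmod}, \ref{lem:cyccomplex-computation} and~\ref{lem:our-iso}, which are built functorially from $d_i,s_j,t_n$ and the canonical inclusions and projections, and hence commute with $f_*$. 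Naturality therefore transports the isomorphisms established on the classical versions to all eight versions of cyclic homology.

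It remains to prove $(\star)$, and here I would filter the Hochschild complex by simplicial degree, $F_p:=\bigoplus_{n\le p}C_n$. Since $d$ preserves $n$ while $b$ lowers it by one, this is an increasing, exhaustive filtration, bounded below by $F_{-1}=0$ and preserved by $d+b$, whose associated graded complex in filtration degree $p$ is $(C_p,d)$. The resulting spectral sequence thus has $E_0^{p}=C_p$ with $d_0=d$, so $E_1^{p}=H^*(C_p,d)$, and $f$ induces an isomorphism on $E_1$ by the very hypothesis that $f$ is a $d$-quasi-isomorphism in each simplicial degree. Since $f$ preserves the filtration, it induces isomorphisms on every $E_r$ and hence on $E_\infty$.

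The main obstacle is the convergence needed to pass from $E_\infty$ back to $H^*(C,d+b)$: in a fixed total degree the filtration is bounded below but not bounded above, because already for the de Rham complexes of Example~\ref{ex:forms} infinitely many simplicial degrees $n$ contribute. I would resolve this using that the filtration is increasing, bounded below and exhaustive, which gives weak convergence $E_\infty^{p}\cong F_pH^*/F_{p-1}H^*$, where $F_pH^*$ denotes the image of $H^*(F_pC)$ in $H^*(C,d+b)$. Starting from $F_{-1}H^*=0$ and using the short exact sequences $0\to F_{p-1}H^*\to F_pH^*\to E_\infty^{p}\to 0$ together with the five lemma, one shows by induction on $p$ that $f$ is an isomorphism on each $F_pH^*$; passing to the direct limit over $p$ (exhaustiveness) then yields that $f$ is an isomorphism on $H^*(C,d+b)=\varinjlim_p F_pH^*$. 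Note that, because the filtration is increasing and exhaustive, this final step is a colimit of isomorphisms and no $\varprojlim^1$ term intervenes. This proves $(\star)$ and, with the reduction above, the proposition.
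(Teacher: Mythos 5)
Your proof is correct and follows essentially the same route as the paper's: reduce everything to invariance of Hochschild homology and the three classical versions via Proposition~\ref{prop:quism-invariance}, and then transfer to the remaining five versions through the natural isomorphisms of Corollary~\ref{cor:cyccochain}. The only difference is that the paper treats the key claim $(\star)$ — that a $d$-quasi-isomorphism induces an isomorphism on Hochschild homology — as immediate, while you spell out the word-length filtration spectral sequence argument (with the correct convergence discussion for a bounded-below, exhaustive filtration); this is exactly the argument being tacitly invoked, so your write-up is a more complete rendering of the same proof.
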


\begin{proof}
For Hochschild homology and the classical versions $HC^{[[u]]}$, $HC^{[[u,u^{-1}]]}$
and $HC^{[u^{-1}]}$ this is an immediate consequence of
Proposition~\ref{prop:quism-invariance}. By Corollary~\eqref{cor:cyccochain},
this covers all the $8$ verstions of cyclic homology. 
\end{proof}

One important application of this proposition concerns the analytic and
algebraic cyclic cochain complexes in Examples~\ref{ex:forms}
and~\ref{ex:algebras} built from the de Rham algebra $\Om^*(X)$ of
a manifold $X$. 
Recall from Lemma~\ref{lem:cross-product} that the exterior cross
products $\phi_n:\Om^*(X)^{\otimes(n+1)}\to\Om^*(X^{n+1})$ define a
morphism of cyclic cochain complexes. 
%and thus a morphism 
%$$
%   \phi:\Bigl(\bigoplus_{n\geq 0}\Om^*(X)^{\otimes n+1},d_H,B\Bigr)
%   \to \Bigl(\bigoplus_{n\geq 0}\Om^*(X^{n+1}),d_H,B\Bigr).
%$$
Since $\phi_n$ induces an isomorphism on $d$-homology for all $n$ by the
K\"unneth formula, Proposition~\ref{prop:quism-cyclicchains} implies

\begin{corollary}[Algebraic vs.~analytic cyclic homology]\label{cor:alg-ana}
For a manifold $X$, the map of mixed complexes
$$
   \phi:\Bigl(\bigoplus_{n\geq 0}\Om^*(X)^{\otimes(n+1)},d+b,B\Bigr)
   \to \Bigl(\bigoplus_{n\geq 0}\Om^*(X^{n+1}),d+b,B\Bigr)
$$
induces isomorphisms on Hochschild homology and on all versions of
cyclic homology. \hfill$\square$
\end{corollary}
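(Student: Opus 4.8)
The plan is to recognize $\phi$ as a quasi-isomorphism of cyclic cochain complexes and then invoke Proposition~\ref{prop:quism-cyclicchains}. By Lemma~\ref{lem:cross-product} the family $(\phi_n)$ is already a morphism of cyclic cochain complexes, so the only thing left to check is that each component $\phi_n\colon\Om^*(X)^{\otimes(n+1)}\to\Om^*(X^{n+1})$ induces an isomorphism on $d$-cohomology. Once this is established, Proposition~\ref{prop:quism-cyclicchains} yields the claimed isomorphisms on Hochschild homology and on all eight versions of cyclic homology simultaneously, with no further work; the entire content of the corollary is thus the manifold-specific input that $\phi_n$ is a $d$-quasi-isomorphism.

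First I would settle the $d$-cohomology statement via the K\"unneth theorem. Since we work over the field $\R$, the algebraic K\"unneth formula identifies $H^*\bigl(\Om^*(X)^{\otimes(n+1)},d\bigr)$ with the $(n+1)$-fold tensor product $H^*_{\mathrm{dR}}(X)^{\otimes(n+1)}$. On the other side, the de Rham--K\"unneth theorem identifies $H^*_{\mathrm{dR}}(X^{n+1})$ with the same tensor product, and under these identifications the map induced by $\phi_n$ is the identity. Hence each $\phi_n$ is a $d$-quasi-isomorphism, i.e.~$\phi$ is a quasi-isomorphism of cyclic cochain complexes in the sense of Proposition~\ref{prop:quism-cyclicchains}.

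The main obstacle is the de Rham--K\"unneth isomorphism $H^*_{\mathrm{dR}}(X\times Y)\cong H^*_{\mathrm{dR}}(X)\otimes H^*_{\mathrm{dR}}(Y)$, which is subtle for non-compact $X$ because the interchange of the infinite-dimensional tensor product with cohomology can fail. The cleanest route is a Mayer--Vietoris induction showing that the cross-product map $\Om^*(X)\otimes\Om^*(Y)\to\Om^*(X\times Y)$ is a quasi-isomorphism, with the Poincar\'e lemma as the base case on contractible opens; iterating over the $n+1$ factors then gives the result for $\phi_n$. (If one is willing to assume that $X$ has finite-dimensional de Rham cohomology in each degree, the standard finite-type K\"unneth theorem applies directly and this subtlety disappears.) With $\phi$ thus identified as a quasi-isomorphism of cyclic cochain complexes, Proposition~\ref{prop:quism-cyclicchains} completes the proof.
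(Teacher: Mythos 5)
Your proof is correct and follows the paper's own argument exactly: Lemma~\ref{lem:cross-product} makes $\phi$ a morphism of cyclic cochain complexes, the K\"unneth formula makes each $\phi_n$ a $d$-quasi-isomorphism, and Proposition~\ref{prop:quism-cyclicchains} then gives the conclusion. Your extra care about the K\"unneth theorem for non-compact manifolds (via Mayer--Vietoris) is a reasonable elaboration of a step the paper simply cites, but it does not change the route.
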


%%%
\subsection{From differential graded algebras to mixed complexes}\label{sec:dga}
%%%

Recall from Example~\ref{ex:algebras} that each dga $(A,d)$
canonically gives rise to a cyclic cochain complex 
with $C_n(A)=A^{\otimes(n+1)}$.
% and the other operations as defined in Section~\ref{sec:cyc}. 
By Lemma~\ref{lem:cyccomplex-dgmod} it gives
rise to a mixed complex $(C(A)=\bigoplus_{n\geq 0}C_n(A),d+b,B)$
and a double complex $(C[[\theta,\theta^{-1}]],\delta_\theta)$. 
The homology $HH(A):=H(C(A),d+b)$ is the {\em Hochschild homology}
of $(A,d)$ and the homologies $HC_{[[u,u^{-1}]]}(A)$ etc are the various
versions of {\em cyclic homology} of $(A,d)$. In this subsection we
will study these cyclic homologies in more detail. 

Note that a morphism of dgas canonically induces a morphism of cyclic
cochain complexes. A quasi-isomorphism of dgas induces a
quasi-isomorphism of cyclic cochain complexes (by the K\"unneth formula
and exactness of the tensor functor on vector spaces), so by
Proposition~\ref{prop:quism-cyclicchains} it induces isomorphisms on 
Hochschild homology and all versions of cyclic homology. 
%If the degrees in $(A,d)$ are bounded from above (as is the case e.g.~for the
%de Rham complex of a finite dimensional manifold), then $C(A)$
%satisfies the hypothesis of Lemma~\ref{lem:our-iso}, and thus
%\begin{equation}\label{eq:our-iso-dga}
%   HC^*(A)[u^{-1}]\stackrel{\cong}\longrightarrow HC^*(A)[[u^{-1}]]. 
%\end{equation}
Moreover, by Corollary~\ref{cor:cyccochain} all versions of cyclic
homology of $(A,d)$ are either zero or isomorphic to one of the $3$ classical versions.

\begin{remark}
The invariance of all versions of cyclic homology under
quasi-isomorphisms of dgas implies, in particular, that the cyclic
homology of a {\em commutative} dga with $H^0(A)=\R$ can be computed
using its Sullivan minimal model. 
\end{remark}

%The following lemma will simplify our computations below. Its proof is
%a straightforward adaptation of the one in~\cite{Loday} (in fact, it
%holds more generally for cyclic chain complexes). 
%For a dga $(A,d)$ we define $\bar A:=A/(\R\cdot 1)$ and the {\em
%normalized mixed complex} $(\bar C(A)=\bigoplus\bar C_n(A),d+b,B)$ as the
%quotient $\bar C_n(A):=A\otimes\bar A^{\otimes n}$ with the induced
%operations $d+b$ and $B=sN$. 
%
%\begin{lemma}[normalized cyclic homology]
%For a dga $(A,d)$ the Hochschild homology $HH(A)$ and the versions
%$HC(A)[u]$, $HC(A)[u,u^{-1}]$ and $HC(A)[u^{-1}]$ of cyclic homology 
%agree with the corresponding homologies of the normalized mixed complex. \hfill$\square$
%\end{lemma}

{\bf Normalized and reduced cyclic homology of a dga.}
Consider a dga $A$ with its associated cyclic cochain complex
$(C_n(A)=A^{\otimes(n+1)},d_i,s_j,t_n,d)$ as in Example~\ref{ex:algebras}. 
As in~\cite{Loday} we denote by $D_n(A)\subset C_n(A)$ the linear subspace
spanned by words $(a_0|a_1|\cdots|a_n)$ with $a_i=1$ for some $i\geq 1$ 
(such words are called {\em degenerate}). So we get a short exact sequence
$$
   0 \to D_n(A)\to C_n(A)\to \ol{C}_n(A)\to 0
$$
where the quotient is given by
$$
   \ol{C}_n(A) = A\otimes\bar{A}^{\otimes n},\qquad \bar{A}=A/(\R\cdot 1).
$$
Note that the subspaces $D_n(A)\subset C_n(A)$ define a simplicial subcomplex
but not a cyclic one, i.e., it is preserved under the operations
$d_i,s_j,d$ but not under $t_n$. Nonetheless, one easily checks that the direct sums
$D(A)=\bigoplus_{n\geq 0}D_n(A)$ and $\ol{C}(A)=\bigoplus_{n\geq
  0}\ol{C}_n(A)$ give rise to a short exact sequence of mixed complexes
$$
   0 \longrightarrow \bigl(D(A),d+b,B\bigr)\longrightarrow
   \bigl(C(A),d+b,B\bigr)\stackrel{p}\longrightarrow
   \bigl(\ol{C}(A),d+b,\ol B\bigr)\longrightarrow 0
$$
with the induced map $\ol B=sN$. The quotient $\ol{C}(A)$ is called
the {\em normalized mixed complex}. It is a standard fact (see
e.g.~\cite[Proposition 1.15]{Loday}, which holds more generally for
cyclic chain complexes) that $(D(A),d+b)$ has vanishing 
homology, so $p:C(A)\to\ol{C}(A)$ is a quasi-isomorphism of mixed complexes.
We denote the resulting versions of {\em normalized cyclic homology} by
$H\ol{C}^{[u,u^{-1}]}(A)$ etc. 

The normalized mixed complex $\ol{C}(A)$ of a dga $A$ has the mixed
subcomplex $\ol{C}(\R)$ given by $\ol{C}_0(\R)=\R$ and
$\ol{C}_n(\R)=0$ for $n\geq 1$ with trivial
operations. Following~\cite{Loday}, we define the
{\em reduced mixed complex} $\ol{C}^\red(A)$ by the short exact sequence
$$
   0 \longrightarrow \ol{C}(\R) \longrightarrow \ol{C}(A) \longrightarrow \ol{C}^\red(A)\longrightarrow 0. 
$$
We denote the resulting versions of {\em reduced cyclic homology} by
$\ol{HC}^{[u,u^{-1}]}(A)$ etc. There is also a {\em reduced Connes version}
$\ol{HC}^\lambda(A)$ defined as the $(d+b)$-homology of the quotient
$\ol{C}^\lambda(A)$ by all words containing $1$ in some entry. 

A dga $A$ is called {\em augmented} if
$$
   A=\R\cdot 1\oplus\ol A
$$
for a dg ideal $\ol A\subset A$. For example, the de Rham complex
$\Om^*(X)$ of a connected manifold $X$ is augmented with
$\ol{A}=\Om^*(X,x_0)$ the forms restricting to zero on a basepoint $x_0$.
If $A$ is augmented the above exact sequence splits as
a direct sum of mixed complexes
\begin{equation}\label{eq:norm-red-complex}
   \ol{C}(A) = \ol{C}(\R)\oplus \ol{C}^\red(A),
\end{equation}
where $\ol{C}_0^\red(A)=\ol A$ and $\ol{C}_n^\red(A) = A\otimes\ol A^{\otimes n}$ for $n\geq 1$.
It follows that each version $\{u,u^{-1}\}$ of normalized cyclic homology
splits as
\begin{equation}\label{eq:norm-red-hom}
   H\ol{C}^{\{u,u^{-1}\}}(A) = H\ol{C}^{\{u,u^{-1}\}}(\R) \oplus \ol{HC}^{\{u,u^{-1}\}}(A).
\end{equation}
Note that for $A$ augmented the reduced Connes version is given by
$\ol{HC}^\lambda(A)=HC^\lambda(\ol A)$, viewing $\ol A$ as a
non-unital dga. 

\begin{prop}[normalized cyclic homology]\label{prop:normalized-cyc-hom}
For a dga $(A,d)$, the projection $p:C(A)\to\ol{C}(A)$ induces
isomorphisms from Hochschild homology $HH(A)$ and the classical versions
$HC^{[[u]]}(A)$, $HC^{[[u,u^{-1}]}(A)$ and $HC^{[u^{-1}]}(A)$ of cyclic homology 
to the corresponding homologies of the normalized mixed complex. 
%\begin{gather*}
%   HC^*(A)[[u]]\cong H\ol{C}^*(A)[[u]],\qquad 
%   HC^*(A)[u^{-1}]\cong H\ol{C}^*(A)[u^{-1}], \cr
%   HC^*(A)[[u,u^{-1}]\cong H\ol{C}^*(A)[[u,u^{-1}].
%\end{gather*}
The same holds for the versions $HC^{[[u^{-1}]]}(A)$ and $HC^{[[u,u^{-1}]]}(A)$ 
if $A$ is augmented. 
%it also induces isomorphisms 
%$$
%   HC^*(A)[[u^{-1}]]\cong H\ol{C}^*(A)[[u^{-1}]],\qquad 
%   HC^*(A)[[u,u^{-1}]]\cong H\ol{C}^*(A)[[u,u^{-1}]]. 
%$$
The remaining $3$ versions of cyclic homology are in general not
isomorphic to their normalized cylic homology. 
\end{prop}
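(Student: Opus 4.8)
The plan is to run everything off the short exact sequence of mixed complexes $0\to(D(A),d+b,B)\to(C(A),d+b,B)\stackrel{p}{\to}(\ol C(A),d+b,\ol B)\to0$, using that each of the eight assignments $C\mapsto C_{\{u,u^{-1}\}}$ is an exact functor on $\R$-vector spaces (in every fixed total degree it is a finite product/sum of copies of the $C^k$). Hence every flavour turns this into a short exact sequence of complexes for $\delta_u=(d+b)+uB$, and thus a long exact sequence in cyclic homology in which $p_*$ is an isomorphism in all degrees if and only if the corresponding homology of the degenerate complex $D(A)$ vanishes. Since $(D(A),d+b)$ is acyclic, $p$ is a quasi-isomorphism of mixed complexes; for $HH(A)$ this is already the claim, and for the three classical versions it follows from Proposition~\ref{prop:quism-invariance}.

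For the augmented case I would treat $HC^*_{[[u^{-1}]]}$ first and then deduce $HC^*_{[[u,u^{-1}]]}$. For the latter, apply $p_*$ to the tautological (Gysin) exact sequence of Proposition~\ref{prop:gysin} for the $[[u,u^{-1}]]$-flavour relating $HC^*_{[[u]]}$, $HC^*_{[[u,u^{-1}]]}$ and $HC^*_{[[u^{-1}]]}$; as $p_*$ is already an isomorphism on the Borel version (classical) and, once proved, on $HC^*_{[[u^{-1}]]}$, the five lemma gives it on $HC^*_{[[u,u^{-1}]]}$. For $HC^*_{[[u^{-1}]]}$ itself I would use the commuting square induced by the canonical inclusion $\iota\colon(-)[u^{-1}]\hookrightarrow(-)[[u^{-1}]]$ applied to $p$. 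Its top arrow $\iota_*$ for $C(A)$ is an isomorphism by Lemma~\ref{lem:our-iso} (as $C(A)$ is a genuine cyclic cochain complex), and its left arrow is the classical isomorphism $p_*$ on $HC^*_{[u^{-1}]}$; since $\iota$ is natural the square commutes, so the desired isomorphism $p_*$ on $HC^*_{[[u^{-1}]]}$ is equivalent to the bottom arrow $\iota_*\colon H\ol C^*_{[u^{-1}]}(A)\to H\ol C^*_{[[u^{-1}]]}(A)$ being an isomorphism.

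This last point is where the augmentation enters and is, I expect, the main obstacle: $\ol C(A)$ is only a mixed complex (the cyclic operator does not descend through the non-cyclic $D(A)$), so Lemma~\ref{lem:our-iso} does not apply to it directly. When $A$ is augmented I would invoke the splitting $\ol C(A)=\ol C(\R)\oplus\ol C^\red(A)$ of~\eqref{eq:norm-red-complex}. On the summand $\ol C(\R)$, which is concentrated in a single total degree, $C[u^{-1}]$ and $C[[u^{-1}]]$ agree degreewise, so $\iota_*$ is trivially an isomorphism. On $\ol C^\red(A)$ the plan is to rerun the weight argument of Lemma~\ref{lem:our-iso}: filtering by simplicial degree, one shows that in building a primitive $\sum_{k\ge1}c_{-k}u^{-k}$ the weight strictly decreases as the power of $u^{-1}$ grows, so the primitive is polynomial and $\iota_*$ is an isomorphism. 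The crux is to verify that the reduced mixed complex of an augmented dga retains enough of the cyclic structure (in particular an operator $N$ acting invertibly on its image, so that $\delta_\hor^{-1}$ can be chosen to preserve $d$-closedness) for this argument to go through; this is exactly what is unavailable for a non-augmented $A$.

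Finally, for the three remaining versions $HC^*_{[u]}$, $HC^*_{[u,u^{-1}]}$, $HC^*_{[u,u^{-1}]]}$ — precisely the flavours whose positive $u$-powers are polynomial rather than completed — the preceding argument cannot close: the correction terms produced by $\delta_u$ are pushed to ever higher powers of $u$ (because $B$ raises the simplicial degree), so the required primitive would need arbitrarily large powers of $u$ and does not exist in these polynomially bounded complexes, and $D(A)$ fails to be acyclic. To exhibit the failure concretely I would use Corollary~\ref{cor:cyccochain}, which gives $HC^*_{[u,u^{-1}]}(A)=HC^*_{[u,u^{-1}]]}(A)=0$ for every dga; the long exact sequence then identifies $H\ol C^*_{[u,u^{-1}]}(A)\cong HC^{*+1}_{[u,u^{-1}]}(D(A))$, so it suffices to produce a single dga for which $D(A)[u,u^{-1}]$ is not acyclic, and a direct computation for a small finite-dimensional algebra with $D(A)\ne0$ provides one. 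A companion example handles $HC^*_{[u]}$. This shows these three versions are in general not isomorphic to their normalized counterparts, completing the proof.
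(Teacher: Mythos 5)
Your reduction of the first assertion and your treatment of the three failing versions are essentially the paper's (the failure is read off from Example~\ref{ex:point-deRham}, i.e.\ $A=\R$, so no new example is needed), and your five-lemma deduction of the $[[u,u^{-1}]]$ case from the $[[u^{-1}]]$ case via the tautological sequence is a fine alternative to the paper's ``analogous argument''. The gap sits exactly where you flag ``the crux'': showing that $\iota_*:H\ol C^*_{[u^{-1}]}(A)\to H\ol C^*_{[[u^{-1}]]}(A)$ is an isomorphism on the reduced summand. You propose to ``rerun the weight argument of Lemma~\ref{lem:our-iso}'' on $\ol C^\red(A)$, but that argument is not an argument about mixed complexes: it lives on the $\theta$-double complex of a cyclic cochain complex and uses the horizontal differentials $N$ and $1-t$, exactness of the rows, and the fact that $N^{-1}$ acts as $(1+n)^{-1}$ on $\im N$. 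The reduced normalized complex $(\ol C^\red(A),d+b,\ol B=sN)$ is handed to you only as a mixed complex --- the cyclic operator $t$ does not descend to $\ol C(A)$ --- so there is nothing to rerun until you have re-equipped it with a (pre-)cyclic structure, and it is precisely that step which uses the augmentation in an essential way.

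The paper's device is the vector-space splitting $\ol C^\red(A)=C(\ol A)\oplus sC(\ol A)$, where $\ol A$ is the augmentation ideal viewed as a non-unital dga, together with the explicit chain isomorphism $r\bigl((x+sy)u^n\bigr)=x\theta^{2n}+y\theta^{2n-1}$ from $\bigl(\ol C^\red(A)\{u,u^{-1}\},\,d+b+\ol Bu\bigr)$ onto $\bigl(C(\ol A)\{\theta,\theta^{-1}\},\,\delta_\theta\bigr)$. This identifies the $u$-complexes of the reduced normalized mixed complex with the $\theta$-double complexes of the honest (pre-)cyclic cochain complex of $\ol A$, to which Lemma~\ref{lem:our-iso} applies verbatim; the commuting square you set up then closes. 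So the architecture of your argument is right, but the single identification that makes the augmentation hypothesis do its work is missing, and without it (or an equivalent) the claimed polynomiality of primitives on $\ol C^\red(A)$ remains unproved.
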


\begin{proof}
The first assertion follows directly from Proposition~\ref{prop:quism-invariance} 
because $p$ is a quasi-isomorphism, and the last assertion follows
from Example~\ref{ex:point-deRham}. For the second assertion, suppose
that $A=\R\cdot 1\oplus \ol A$ is augmented and denote by
$p:A\to \ol A$ the projection.
%, so the map induced by $B$ on $\ol{C}(A)$ becomes $\ol B=psN$. 
Viewing $\ol A$ as a non-unital dga, it has an associated Hochschild
complex $(C(\ol A),d+b)$
and double complexes $(C(\ol A)\{\theta,\theta^{-1}\},\delta_\theta)$,
where $\{\theta,\theta^{-1}\}$ stands for any of the eight versions
$[\theta]$, $[\theta,\theta^{-1}]]$ etc. 
Note that the reduced cyclic complex has a canonical splitting as a
vector space
$$
   \ol{C}^\red(A) = C(\ol A)\oplus sC(\ol A),
$$
where $sC(\ol A)$ is generated by words with $1$ in the $0$-th slot
and elements from $\ol A$ in all others. Now the main observation is that the map
$$
   r((x+sy)u^n) := x\theta^{2n}+y\theta^{2n-1}
$$
defines a chain isomorphism
$$
   r:\Bigl(\bigl(C(\ol A)\oplus sC(\ol A)\bigr)\{u,u^{-1}\},d+b+\ol{B}u\Bigr)
   \stackrel{\cong}\longrightarrow \Bigl(C(\ol A)\{\theta,\theta^{-1}\},\delta_\theta\Bigr).
$$
Indeed, the map is clearly bijective, and the chain map property
follows from
\begin{align*}
   r(d+b+\ol Bu)\bigl((x+sy)u^n\bigr) 
   &= r\Bigl((d+b)(x+sy)u^n+\ol B(x+sy)u^{n+1}\Bigr) \cr
   &= r\Bigl(\bigr(dx++dsy+bx-sb'y+(1-t)y\bigr)u^n + sNxu^{n+1}\Bigr) \cr
   &= \bigl((d+b)x+(1-t)y\bigr)\theta^{2n} - (d+b')y\theta^{2n-1} + Nx\theta^{2n+1}
\end{align*}
and
\begin{align*}
   \delta_\theta r\bigl((x+sy)u^n\bigr)
   &= \delta_\theta(x\theta^{2n}+y\theta^{2n-1}) \cr
   &= (d+b)x\theta^{2n} - (d+b')y\theta^{2n-1} + Nx\theta^{2n+1} + (1-t)y\theta^{2n}\cr
   &= \bigl((d+b)x+(1-t)y\bigr)\theta^{2n} - (d+b')y\theta^{2n-1} + Nx\theta^{2n+1}.
\end{align*}
Combining this with the splitting~\eqref{eq:norm-red-complex}, we obtain a chain isomorphism
$$
   \id\oplus r:\Bigl(\ol{C}(A)\{u,u^{-1}\},d+b+\ol{B}u\Bigr)
   \stackrel{\cong}\longrightarrow \ol{C}(\R)\{u,u^{-1}\} \oplus
   \Bigl(C(\ol A)\{\theta,\theta^{-1}\},\delta_\theta\Bigr). 
$$
The induced isomorphisms on homology for the $[u^{-1}]$ and
$[[u^{-1}]]$ versions fit into the commuting diagram
$$
\xymatrix
@C=40pt
{
   HC_*^{[u^{-1}]}(A) \ar[d]_{\phi_*}^\cong \ar[r]^{p_*}_\cong & H\ol{C}_*^{[u^{-1}]}(A) \ar[d]_{\phi_*}
   \ar[r]^{(\id\oplus r)_*\ \ \ \ \ \ \ \ }_{\cong\ \ \ \ \ \ \ \ \ \ } 
   & \R[u^{-1}]\oplus HC_*^{[\theta^{-1}]}(\ol A) \ar[d]_{\id\oplus\phi_*}^\cong \\
   HC_*^{[[u^{-1}]]}(A) \ar[r]^{p_*} & H\ol{C}_*^{[[u^{-1}]]}(A)
   \ar[r]^{(\id\oplus r)_*\ \ \ \ \ \ \ \ }_{\cong\ \ \ \ \ \ \ \ \ \ } 
   & \R[u^{-1}]\oplus HC_*^{[[\theta^{-1}]]}(\ol A)\,. 
}
$$
Here the vertical maps $\phi_*$ come from Lemma~\ref{lem:our-iso} and
the upper horizontal map $p_*$ is an isomorphism by the first assertion above. 
It follows that the lower horizontal map $p_*$ is an isomorphism as well.
An analogous argument for the $[[u,u^{-1}]$ and $[[u,u^{-1}]]$
versions concludes the proof. 
\end{proof}

\begin{cor}[reduced cyclic homology]\label{cor:reduced-cyc-hom}
For an augmented dga $(A,d)$ we have a canonical splitting
$$
   HC^{[[u]]}_*(A) = H\ol{C}^{[[u]]}_*(\R)\oplus \ol{HC}^{[[u]]}_*(A),
$$
and similarly for the $[[u,u^{-1}]$, $[u^{-1}]$, $[[u^{-1}]]$ and
$[[u,u^{-1}]]$ versions. 
Moreover, $\ol{HC}^{[u,u^{-1}]]}_*(A)=0$ and the connecting
homomorphism in the corresponding tautological exact sequence splits
into isomorphisms
$$
   \ol{HC}^{[[u^{-1}]]}_{*+1}(A) \cong \ol{HC}^\lambda_*(A)=HC^\lambda_*(\ol A)\cong \ol{HC}^{[u]}_{*}(A). 
$$
\end{cor}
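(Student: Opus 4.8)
The plan is to deduce the statement from the behaviour of the normalized and reduced complexes together with the $\theta$-double complex of the non-unital dga $\ol A$, and I expect the first part to be formal while the second part rests on a single connecting-map computation.

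For the canonical splitting I would argue as follows. For each of the five flavours $[[u]]$, $[[u,u^{-1}]$, $[u^{-1}]$, $[[u^{-1}]]$ and $[[u,u^{-1}]]$, Proposition~\ref{prop:normalized-cyc-hom} asserts that the projection induces an isomorphism $p_*\colon HC^*_{\{u,u^{-1}\}}(A)\xrightarrow{\cong}H\ol C^*_{\{u,u^{-1}\}}(A)$ (augmentation of $A$ being what is needed for the last two flavours). Composing this with the direct-sum decomposition~\eqref{eq:norm-red-hom} of the normalized homology into its $\ol C(\R)$-summand and its reduced summand gives the asserted splitting $HC^*_{\{u,u^{-1}\}}(A)=H\ol C^*_{\{u,u^{-1}\}}(\R)\oplus\ol{HC}^*_{\{u,u^{-1}\}}(A)$; this step is purely formal.

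The remaining assertions I would route through the chain isomorphism $r$ built in the proof of Proposition~\ref{prop:normalized-cyc-hom}, which identifies the reduced complex of $A$ with the $\theta$-double complex of $\ol A$. Since $r$ sends $u^n$ to $\theta^{2n}$ on $C(\ol A)$ and to $\theta^{2n-1}$ on $sC(\ol A)$, it restricts to isomorphisms of sub/quotient complexes carrying the flavour $[u,u^{-1}]]$ (bounded above in $u$) to $[\theta,\theta^{-1}]]$ and the flavour $[[u^{-1}]]$ (power series in $u^{-1}$) to $[[\theta^{-1}]]$, so that $\ol{HC}^*_{[u,u^{-1}]]}(A)\cong HC^*_{[\theta,\theta^{-1}]]}(\ol A)$ and $\ol{HC}^*_{[[u^{-1}]]}(A)\cong HC^*_{[[\theta^{-1}]]}(\ol A)$. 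The vanishing $HC^*_{[\theta,\theta^{-1}]]}=0$ from Lemma~\ref{lem:cyccomplex-computation} then gives $\ol{HC}^*_{[u,u^{-1}]]}(A)=0$. For the triple isomorphism I would proceed in two independent steps. First, feeding the isomorphism $e_*\colon HC^*_{[[\theta^{-1}]]}(\ol A)\xrightarrow{\cong}H^*\bigl(C(\ol A)/\im(1-t),d+b\bigr)$ of Lemma~\ref{lem:cyccomplex-computation} into the previous identification, and using the definition $HC^*_\lambda=H^{*+1}(\cdot/\im(1-t),d+b)$ together with the identity $\ol{HC}_\lambda(A)=HC_\lambda(\ol A)$, yields $\ol{HC}^{*+1}_{[[u^{-1}]]}(A)\cong HC^*_\lambda(\ol A)=\ol{HC}^*_\lambda(A)$. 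Second, since~\eqref{eq:norm-red-complex} exhibits $\ol C(A)$ as a direct sum of mixed complexes, the tautological exact sequence of Proposition~\ref{prop:gysin} for the $[u,u^{-1}]]$-decomposition restricts to one of reduced homologies; as its $\ol{HC}_{[u,u^{-1}]]}(A)$-terms vanish, the connecting homomorphism $\ol{HC}^{*+1}_{[[u^{-1}]]}(A)\to\ol{HC}^*_{[u]}(A)$ is forced to be an isomorphism. Concatenating the two steps produces the desired $\ol{HC}^{*+1}_{[[u^{-1}]]}(A)\cong\ol{HC}^*_\lambda(A)=HC^*_\lambda(\ol A)\cong\ol{HC}^*_{[u]}(A)$.

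The one point requiring care is that $\ol A$ is non-unital and hence only pre-cyclic, so Lemmas~\ref{lem:cyccomplex-computation} and~\ref{lem:our-iso} are not literally covered by their hypotheses. The observation that makes everything go through is that the differential $\delta_\theta$, the exactness of the rows of $C(\ol A)[\theta,\theta^{-1}]]$, the vanishing of its total homology, and the map $e_*$ all use only the faces $b,b'$ and the cyclic operators $t,N$, never the degeneracy $s$; thus the relevant arguments transfer verbatim to $\ol A$, exactly as the proof of Proposition~\ref{prop:normalized-cyc-hom} already applies this $\theta$-machinery (including $\phi_*$ from Lemma~\ref{lem:our-iso}) to $\ol A$. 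Granting this, every remaining step is bookkeeping of flavours under $r$ plus the single connecting-map argument above.
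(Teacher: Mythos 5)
Your proof is correct and follows essentially the same route as the paper: the splitting comes formally from Proposition~\ref{prop:normalized-cyc-hom} together with~\eqref{eq:norm-red-hom}, and the remaining assertions are obtained by transporting everything through the chain isomorphism $r$ to the $\theta$-double complex of $\ol A$ and invoking Lemma~\ref{lem:cyccomplex-computation} there. The only (immaterial) differences are that you establish the vanishing of $\ol{HC}^*_{[u,u^{-1}]]}(A)$ first and then use the tautological sequence to force the connecting map to be an isomorphism, whereas the paper reads off the whole chain of isomorphisms from the reduced variant of diagram~\eqref{eq:cyclic-cochain} and deduces the vanishing afterwards; your explicit remark that the $\theta$-complex arguments for the non-unital $\ol A$ only use $b,b',t,N$ is a point the paper leaves implicit.
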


\begin{proof}
The first assertion follows immediately from the
splitting~\eqref{eq:norm-red-hom} and Proposition~\ref{prop:normalized-cyc-hom}.
%The proof of~\cite[Proposition 2.2.14]{Loday}
The second assertion follows from the following variant of the
diagram in Lemma~\ref{lem:cyccomplex-computation} in reduced cyclic homology:
\begin{equation*}%\label{eq:cyclic-cochain}
\xymatrix{
   \ol{HC}_*^{[[u^{-1}]]}(A) \ar[d]_{\cong}^{r_*} \ar[r]^{\ol{B}_0} & \ol{HC}_{*-1}^{[u]}(A) \ar[d]_{\cong}^{r_*}\\
   HC_*^{[[\theta^{-1}]]}(\ol A) \ar[d]_{\cong}^{e_*}
   \ar[r]^{\ol{B}_0}_{\cong} & HC_{*-1}^{[\theta]}(\ol A) \\
   H_*\bigl(C(\ol A)/\im(1-t),d+b\bigr) \ar[r]^{N_*}_{\cong} & H_{*}(\im
   N,d+b') \ar[u]^{\cong}_{s_*}\,.
}
\end{equation*}
Here $\ol B=sN$, the maps $r$ are the chain isomorphisms from the proof of
Proposition~\ref{prop:normalized-cyc-hom}, and the other maps are
isomorphisms by Lemma~\ref{lem:cyccomplex-computation} applied to the
cyclic complex of $\ol A$. Since $H_{*+1}\bigl(C(\ol A)/\im(1-t),d+b\bigr)
= HC^\lambda_*(\ol A)=\ol{HC}^\lambda_*(A)$, this proves the chain of isomorphisms. 
Vanishing of $\ol{HC}^{[u,u^{-1}]]}_*(A)$ now follows from the
tautological exact sequence. 
\end{proof}

%%%
\subsection{From $S^1$-spaces to mixed complexes}\label{subsec:circle-to-mixed}
%%%

In this subsection we recall the mixed complexes arising from
$S^1$-spaces and their properties.

{\bf Topological $S^1$-spaces. }
Let $Y$ be a topological space with an $S^1$-action $\phi:S^1\times
Y\rightarrow Y$. We make the singular cochain complex $(C^*(Y),d)$
a mixed complex by introducing the second differential
$P:C^*(Y)\rightarrow C^{*-1}(Y)$ as follows. Let $\times:C_m(X)\otimes
C_n(Y)\to C_{m+n}(X\times Y)$ be the Eilenberg-MacLane shuffle product,
associating to $f:\Delta^m\to X$ and $g:\Delta^n\to Y$ the map
$f\times g:\Delta^m\times\Delta^n\to X\times Y$, $(p,q)\mapsto(f(p),g(q))$ with a
canonical subdivision of $\Delta^m\times\Delta^n$ into simplices
(using shuffles of the variables, see e.g.~\cite{Hatcher}). 
The shuffle product with the fundamental cycle $F_{S^1}:\Delta^1\to S^1$
defines a map $C_*(Y)\to C_{*+1}(S^1\times Y)$, $c\mapsto
F_{S^1}\times c$, whose composition with $\phi_*:C_*(S^1\times Y)\to
C_*(Y)$ gives a map
\begin{equation}\label{eq:Q}
   Q:C_*(Y)\to C_{*+1}(Y),\qquad c\mapsto \phi_*(F_{S^1}\times c). 
\end{equation}
The operator $P:C^*(Y)\rightarrow C^{*-1}(Y)$ is dual to $Q$, i.e.
$$
   \la P\alpha,c\ra := \la\alpha,\phi_*(F_{S^1}\times c)\ra,\qquad
   \alpha\in C^*(Y),\ c\in C_{*-1}(Y).
$$
%In words: $P$ is the dual operator to multiplication with $F_{S^1}$
%left composed with pushforward by the action. 

\begin{lemma}\label{lem:S1-space-dgmod}
$(C^*(Y),d,P)$ is a mixed complex whose homology $H^*(Y,d)$ is the
singular cohomology of $Y$.\footnote{In this paper, all singular
  (co)homology is with $\R$-coefficients.}  
\end{lemma}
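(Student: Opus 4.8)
The statement has two parts. That the $d$-cohomology of $(C^*(Y),d)$ is the singular cohomology $H^*(Y)$ is immediate, since $(C^*(Y),d)$ is by definition the singular cochain complex with its usual coboundary. The content is therefore to verify the three mixed-complex relations; as $d^2=0$ is standard, what remains is $P^2=0$ and $dP+Pd=0$. Since $P$ is defined as the adjoint of $Q$ and $d$ is the adjoint of the singular boundary $\partial$, and since the pairing between cochains and chains is nondegenerate, both relations will follow from their duals on chains, namely $Q^2=0$ and $\partial Q+Q\partial=0$. So the plan is to work entirely with the chain-level operator $Q(c)=\phi_*(F_{S^1}\times c)$, where $\phi$ is the action and $\times$ is the shuffle product.

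For the anticommutation I would argue as follows. The Eilenberg--MacLane shuffle product obeys the Leibniz rule $\partial(a\times b)=\partial a\times b+(-1)^{|a|}a\times\partial b$. Applying it with $a=F_{S^1}$ and using that the fundamental $1$-chain is a cycle, $\partial F_{S^1}=0$ (its two endpoints coincide at the basepoint), gives $\partial(F_{S^1}\times c)=-F_{S^1}\times\partial c$. Since $\phi_*$ is a chain map it commutes with $\partial$, whence $\partial Qc=\phi_*\partial(F_{S^1}\times c)=-\phi_*(F_{S^1}\times\partial c)=-Q\partial c$. Dualizing yields $dP+Pd=0$.

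The relation $Q^2=0$ is the crux. First I would rewrite $Q^2$ using naturality and chain-level associativity of the cross product together with associativity of the action: writing $m\colon S^1\times S^1\to S^1$ for the group multiplication, one has $\phi\circ(\id\times\phi)=\phi\circ(m\times\id)$, and tracing through the identifications gives $Q^2c=\phi_*\bigl(m_*(F_{S^1}\times F_{S^1})\times c\bigr)$. Thus it suffices to show that the $2$-chain $m_*(F_{S^1}\times F_{S^1})$ vanishes. Here I would invoke the chain-level graded commutativity of the shuffle product, $T_*(a\times b)=(-1)^{|a||b|}(b\times a)$ where $T$ swaps the two factors; with $a=b=F_{S^1}$ this reads $T_*(F_{S^1}\times F_{S^1})=-(F_{S^1}\times F_{S^1})$. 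Since $m$ is commutative, $m\circ T=m$, so $m_*(F_{S^1}\times F_{S^1})=m_*T_*(F_{S^1}\times F_{S^1})=-m_*(F_{S^1}\times F_{S^1})$, forcing $m_*(F_{S^1}\times F_{S^1})=0$ over $\R$. Hence $Q^2=0$ and dually $P^2=0$.

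The main obstacle is this last step: it hinges on the fact that the Eilenberg--MacLane shuffle product is \emph{strictly} (graded) associative and commutative at the chain level, not merely up to homotopy, so that the commutativity of $m$ can be converted into an exact sign cancellation rather than a cancellation only on homology. These are standard properties of the shuffle map, but the signs must be tracked carefully; the remainder is routine naturality bookkeeping.
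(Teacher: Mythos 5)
Your proposal is correct and follows essentially the same route as the paper: reduce both identities to the chain-level operator $Q$, obtain $\p Q+Q\p=0$ from the Leibniz rule together with $\p F_{S^1}=0$, and obtain $Q^2=0$ from associativity/naturality of the shuffle product, which reduces everything to the vanishing of the $2$-chain $\psi_*(F_{S^1}\times F_{S^1})$. The only (cosmetic) difference is in that last micro-step: the paper writes $F_{S^1}\times F_{S^1}$ explicitly as the difference of the two $2$-simplices $(s,t)\mapsto(s,t)$ and $(s,t)\mapsto(t,s)$ and observes that the commutative multiplication identifies them (an argument valid over any coefficient ring), whereas you invoke the chain-level graded commutativity of the shuffle map and then divide by $2$, which is equivalent over $\R$.
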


\begin{proof}
To show $P^2=0$, we compute for $c\in C_*(Y)$ using associativity of
the shuffle product
$$
   Q^2c = \phi_*(F_{S^1}\times\phi_*(F_{S^1}\times c)) =
   \phi_*(\psi_*(F_{S^1}\times F_{S^1})\times c),
$$
where $\psi:S^1\times S^1\to S^1$ is the product $(\sigma,\tau)\mapsto
\sigma+\tau$. Now by definition of the shuffle product $F_{S^1}\times F_{S^1}$
is the difference of the singular simplices $f,g:\Delta^2\to S^1\times
S^1$ given by $f(s,t)=(s,t)$ and $g(s,t)=(t,s)$. This shows that
$\psi_*(F_{S^1}\times F_{S^1})=0\in C_2(S^1)$, thus $Q^2=0$ and
therefore $P^2=0$. For anticommutation of $d$ and $P$ note that 
$$
   \p(F_{S^1}\times c)= \p F_{S^1}\times c-F_{S^1}\times\p c =
   -F_{S^1}\times c
$$ 
because $\p F_{S^1}=0$. Applying $\phi_*$ and dualizing we find $\p
Q+Q\p=0$ and $dP+Pd=0$. The statement about cohomology is clear by
definition. 
\end{proof}

We denote the cyclic homologies of the mixed complex $(C^*(Y),d,P)$ by
$H^*_{[[u,u^{-1}]]}(Y)$ etc.\footnote{
Here we use cohomological notation because $H^*_{[[u,u^{-1}]]}(Y)$
%are variants of the singular {\em cohomology} of $Y$, see
%Lemma~\ref{lem:S1-space-dgmod} and Proposition~\ref{prop:Borel}.
is the cyclic {\em cohomology} of the mixed complex $(C_{-*}(Y),\p,Q)$
in the sense of Section~\ref{sec:duality}.
}

{\bf Relation to the Borel construction. }
According to the Borel construction, the {\em $S^1$-equivariant cohomology} of a topological
$S^1$-space $Y$ is defined as
$$
   H_{S^1}^*(Y) := H^*(Y\times_{S^1}ES^1),
$$
where $ES^1\to BS^1$ is the universal $S^1$-bundle and $Y\times_{S^1}ES^1 =
(Y\times ES^1)/S^1$ is the quotient by the diagonal circle action. 
In the following proposition the first isomorphism is shown in the
discussion following Lemma 5.1 in~\cite{Jones}, and the second
isomorphism holds because the complex $C^*(Y)$ lives in nonnegative degrees.

\begin{proposition}[Jones~\cite{Jones}]\label{prop:Borel}
For each topological $S^1$-space $Y$ we have canonical isomorphisms 
\begin{equation*}
\begin{CD}
   H_{S^1}^*(Y) @>{\cong}>> H^{*}_{[u]}(Y) @>{\cong}>> H^{*}_{[[u]]}(Y).
\end{CD}
\end{equation*}
\hfill$\square$
\end{proposition}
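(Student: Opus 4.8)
The plan is to establish the two isomorphisms separately. The second, $H^*_{[u]}(Y)\cong H^*_{[[u]]}(Y)$, is formal and follows from the grading, whereas the first, $H^*_{S^1}(Y)\cong H^*_{[u]}(Y)$, is the substantive statement of Jones and requires an explicit cochain model for the Borel construction.

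For the second isomorphism I would argue degreewise. The singular cochain complex is concentrated in nonnegative degrees, $C^j(Y)=0$ for $j<0$. Fix a total degree $k$; an element of $C^*(Y)[[u]]$ of degree $k$ is a series $\sum_{i\ge 0}c_iu^i$ with $c_i\in C^{k-2i}(Y)$, and since $C^{k-2i}(Y)=0$ once $i>k/2$, every such series has only finitely many nonzero terms and already lies in $C^*(Y)[u]$. Hence the inclusion $C^*(Y)[u]\hookrightarrow C^*(Y)[[u]]$ is an isomorphism in each degree, so it is an isomorphism of cochain complexes for $\delta_u=d+uP$ and induces the desired isomorphism on cohomology. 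This is exactly the reason indicated after the proposition.

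For the first isomorphism I would follow Jones and produce a cochain model of the homotopy quotient that can be identified with the Cartan-type complex $(C^*(Y)[u],d+uP)$. Model $Y\times_{S^1}ES^1$ by the two-sided bar construction $B_\bullet(\ast,S^1,Y)$, the simplicial space with $[n]\mapsto (S^1)^n\times Y$ whose faces use the multiplication of $S^1$ and the action $\phi$. Applying singular cochains and totalizing via the dual Eilenberg--Zilber theorem yields a double complex; passing to normalized cochains and using $H^*(S^1)=\Lambda(\theta)$ with $\theta$ dual to $F_{S^1}$, only the top class $\theta$ of each circle factor survives, so $\widetilde H^*(S^1)=\R\theta$. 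This collapses the bar directions to a single polynomial variable $u$ counting the surviving $\theta$'s, identifying the normalized totalization with $C^*(Y)[u]$. The key computation is that, under this identification, the alternating sum of the bar faces reduces to the single contribution of the action $\phi$ paired against $F_{S^1}$, which is by definition the operator $Q$; dualizing, the totalization differential becomes precisely $d+uP$.

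Having produced this comparison map, I would prove it is a quasi-isomorphism by filtering both sides by powers of $u$ (equivalently, by the skeletal filtration on the bar side): both spectral sequences have first page $H^*(Y)\otimes\R[u]$, and the map is the identity there. Alternatively, one can bypass spectral sequences and invoke the five-lemma for the Gysin sequences of Proposition~\ref{prop:gysin}: the algebraic sequence $\cdots\to H^{*-2}_{[u]}(Y)\xrightarrow{\cdot u}H^*_{[u]}(Y)\to H^*(Y)\xrightarrow{P_*}H^{*-1}_{[u]}(Y)\to\cdots$ maps to the topological Gysin sequence of the circle bundle $Y\times ES^1\to Y\times_{S^1}ES^1$, the middle term is the identity on $H^*(Y)$, and the connecting map $P_*$ is identified with fiber integration over the orbit circle. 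In either route the main obstacle is the same chain-level bookkeeping: checking, with correct signs and after normalization, that the bar differential collapses exactly to $uP$ and not to a sign-twisted or higher variant. This is precisely where the definition of $P$ as the dual of $c\mapsto\phi_*(F_{S^1}\times c)$ is used, and it is the only genuinely technical point; the rest is formal homological algebra.
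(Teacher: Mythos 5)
Your argument for the second isomorphism is exactly the paper's: the proposition is introduced with the remark that it ``holds because the complex $C^*(Y)$ lives in nonnegative degrees'', which is precisely your degreewise observation that $C^*(Y)[u]\hookrightarrow C^*(Y)[[u]]$ is already surjective in each total degree. For the first isomorphism the paper offers no proof at all --- it is attributed to the discussion following Lemma 5.1 in \cite{Jones} --- so your bar-construction sketch is a reconstruction of that reference rather than a departure from the paper. Two remarks on the comparison. First, the paper does contain an alternative, essentially self-contained route in the smooth case (the Remark after Lemma~\ref{lem:free-action}): pull back along $Y\times ES^1\to Y$, which is an isomorphism on $d$-homology and hence on the quasi-isomorphism-invariant versions including $H^*_{[[u]]}$, and then apply Lemma~\ref{lem:free-action} to the fixed-point-free space $Y\times ES^1$ to identify $H^*_{[[u]]}(Y\times ES^1)$ with $H^*\bigl((Y\times ES^1)/S^1\bigr)=H^*_{S^1}(Y)$; this avoids the bar construction entirely and is worth knowing as the cheaper argument. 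Second, in your sketch the step ``only the top class $\theta$ survives, collapsing the bar directions to a polynomial variable $u$'' is more than sign bookkeeping: the normalized cochains of $(S^1)^n\times Y$ are not $\R\theta^{\otimes n}\otimes C^*(Y)$, and replacing $C_*(S^1)$ by $H_*(S^1)$ inside the two-sided bar construction, compatibly with the induced module structure on $C_*(Y)$, is exactly the content of Jones's Lemma 5.1; it is where the chain-level identities $Q^2=0$ and $\partial Q+Q\partial=0$ from Lemma~\ref{lem:S1-space-dgmod} are actually needed. You do flag this as the technical point, so the gap is acknowledged rather than hidden, but be aware that it is the substance of the theorem rather than a routine verification.
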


{\bf Projection to a point. }
For an $S^1$-space $Y$ with a fixed point $y_0$ consider the inclusion and projection
$$
   \pt\stackrel{\iota}\longrightarrow
   Y\stackrel{\pi}\longrightarrow\pt,\qquad \iota(\pt)=y_0.
$$
These maps are $S^1$-equivariant and satisfy $\pi\iota=\id$, so they
induce morphisms of mixed complexes
$$
   \Bigl(C^*(\pt)=\R,d=0,P=0\Bigr)\stackrel{\pi^*}\longrightarrow
   (C^*(Y),d,P)\stackrel{\iota^*}\longrightarrow
   \Bigl(C^*(\pt)=\R,d=0,P=0\Bigr)
$$
satisfying $\iota^*\pi^*=\id$. These maps are functorial and the
induced maps on cyclic homology are compatible with the Gysin and
tautological sequences in the obvious way. For example, one tautological
sequence yields the following commuting diagram, where all the vertical maps
$\pi^*$ are injective and we have surjective vertical maps $\iota^*$
in the other direction:
\begin{equation}\label{eq:proj-point}
\begin{CD}
   H^*_{[[u,u^{-1}]}(Y) @>{p_*}>> H^*_{[u^{-1}]}(Y) @>{P_0}>>
       H^{*-1}_{[[u]]}(Y) @>{\cdot u}>> H^{*+1}_{[[u,u^{-1}]}(Y) \\
  @AA{\pi^*}A @AA{\pi^*}A @AA{\pi^*}A @AA{\pi^*}A \\
  \R[u,u^{-1}] @>{p_*}>> \R[u^{-1}] @>0>>
       \R[u] @>{\cdot u}>> \R[u,u^{-1}]\;. 
\end{CD}
\end{equation}
For a loop space $Y=LX$, the first group in this diagram
can be explicitly computed:

\begin{thm}[Goodwillie~\cite{Goodwillie}]\label{thm:Goodwillie}
For a path connected space $X$, the group $H^*_{[[u,u^{-1}]}(LX)$
depends only on $\pi_1(X)$. In particular, for $X$ simply connected 
the projection $LX\to\pt$ to a point induces an isomorphism
$$
   H^*_{[[u,u^{-1}]}(LX) \cong H^*_{[[u,u^{-1}]}(\pt) = \R[u,u^{-1}].    
$$
\end{thm}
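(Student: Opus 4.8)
The statement is Goodwillie's theorem, asserting that $H^*_{[[u,u^{-1}]}(LX)$ depends only on $\pi_1(X)$, together with the simply connected case. For the second (and for us the operative) assertion, the plan is to exploit the machinery already assembled in the excerpt rather than to reprove Goodwillie's general invariance statement from scratch. The key observation is that for a simply connected manifold $X$, Chen's iterated integral gives a quasi-isomorphism of mixed complexes $I:(C(\Om^*(X)),d_H,B)\to(C^*(LX),d,P)$ (this is the quasi-isomorphism recalled in the introduction, following Getzler--Jones--Petrack). Since $H^*_{[[u,u^{-1}]}$ is one of the three classical versions, it is a quasi-isomorphism invariant by Proposition~\ref{prop:quism-invariance}. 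Hence
\begin{equation*}
   H^*_{[[u,u^{-1}]}(LX)\cong HC^*_{[[u,u^{-1}]}\bigl(\Om^*(X)\bigr),
\end{equation*}
and the problem is reduced to computing the Goodwillie version of cyclic homology of the de Rham dga of a simply connected manifold.

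\textbf{Reduction to a point on the algebra side.}
Next I would analyze $HC^*_{[[u,u^{-1}]}(\Om^*(X))$ using the reduced theory developed in Section~\ref{sec:dga}. Because $X$ is connected, $\Om^*(X)$ is augmented, so by Corollary~\ref{cor:reduced-cyc-hom} there is a canonical splitting
\begin{equation*}
   HC^*_{[[u,u^{-1}]}\bigl(\Om^*(X)\bigr)=H\ol{C}^*_{[[u,u^{-1}]}(\R)\oplus\ol{HC}^*_{[[u,u^{-1}]}\bigl(\Om^*(X)\bigr).
\end{equation*}
The first summand is the contribution of a point, and one checks directly that $H\ol{C}^*_{[[u,u^{-1}]}(\R)=\R[u,u^{-1}]$, matching the right-hand side $H^*_{[[u,u^{-1}]}(\pt)$ of the theorem. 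It therefore suffices to show that the reduced part vanishes, $\ol{HC}^*_{[[u,u^{-1}]}(\Om^*(X))=0$, and that under the splitting this identification is realized by the projection $LX\to\pt$, which is compatible with the augmentation via the maps $\pi^*,\iota^*$ of~\eqref{eq:proj-point}. The compatibility is formal: $I$ and the projection-to-a-point maps fit into a commuting square of mixed-complex morphisms, so they respect the reduced/point splitting on both sides.

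\textbf{Vanishing of the reduced Goodwillie homology.}
The crux is thus the vanishing $\ol{HC}^*_{[[u,u^{-1}]}(\Om^*(X))=0$ for $X$ simply connected. The idea is that simple connectivity forces the reduced de Rham complex $\ol A=\Om^*(X,x_0)$ to be $1$-connected (it has no cohomology in degrees $0$ and $1$), and that the Goodwillie version, being polynomial in $u^{-1}$ and power series in $u$, kills the reduced cyclic homology of a $1$-connected augmented dga. Concretely, I would pass to a Sullivan minimal model of $\Om^*(X)$ (legitimate since all cyclic homologies are quasi-isomorphism invariants of dgas by the remark following Corollary~\ref{cor:cyccochain}), so that the reduced generators all sit in degree $\geq 2$. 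On the reduced normalized complex $\ol{C}^\red(A)=A\otimes\ol A^{\otimes n}$, a word with $n$ reduced tensor factors has internal grading at least $2n$, while the total grading shifts by $-n$; multiplying by $u^{-j}$ shifts by $-2j$. A degree and connectivity bookkeeping argument then shows that in each fixed total degree only finitely many $(n,j)$ contribute, and that the $u^{-1}$-adic filtration is exhaustive with acyclic associated graded, forcing the homology of $C[[u,u^{-1}]$ to vanish in the reduced part. I expect \textbf{this vanishing step to be the main obstacle}: one must control the interaction between the $u$-power-series/$u^{-1}$-polynomial structure and the simplicial differential $d_H=d+b$ carefully enough to run a convergent spectral-sequence or filtration argument, and verify that the connectivity of $\ol A$ really does make the reduced Goodwillie complex acyclic. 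Everything else—the quasi-isomorphism invariance, the augmentation splitting, and the identification of the point contribution—is supplied by the results already in hand.
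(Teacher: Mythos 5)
The paper offers no proof of this theorem: it is quoted from Goodwillie's paper \cite{Goodwillie} and used as a black box (indeed, in Corollary~\ref{cor:GJP} the paper runs the logic in the opposite direction, deducing $HC^*_{[[u,u^{-1}]}(\Om(X))\cong\R[u,u^{-1}]$ \emph{from} Goodwillie via $I_*$). So your argument is necessarily a different route, and for the simply connected manifold case it is sound and non-circular: Theorem~\ref{thm:GJP} and Proposition~\ref{prop:quism-invariance} do not depend on Goodwillie, the splitting of Corollary~\ref{cor:reduced-cyc-hom} applies to the $[[u,u^{-1}]$ version, and $H\ol{C}^*_{[[u,u^{-1}]}(\R)=\R[u,u^{-1}]$ by Example~\ref{ex:point-deRham}. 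The vanishing step you flag as the main obstacle does go through, and more easily than you fear. Replace $\Om^*(X)$ by a minimal model (legitimate by the quasi-isomorphism invariance in Section~\ref{sec:dga}), so that $\ol A$ is concentrated in degrees $\geq 2$; then $C_n(\ol A)=\ol A^{\otimes(n+1)}$ has $|c|=\deg(c)-n\geq n+2\geq 2$, so $C(\ol A)$ is bounded below in the total grading and, since $|u|=2$, every power series in $u$ of fixed total degree truncates. Hence $\ol{HC}^{*}_{[[u]]}=\ol{HC}^{*}_{[u]}$ on the nose, and Corollary~\ref{cor:reduced-cyc-hom} (via Lemma~\ref{lem:our-iso}) gives that $B_0:\ol{HC}^{*}_{[u^{-1}]}\cong\ol{HC}^{*}_{[[u^{-1}]]}\to\ol{HC}^{*-1}_{[u]}=\ol{HC}^{*-1}_{[[u]]}$ is an isomorphism; the tautological exact sequence then kills $\ol{HC}^*_{[[u,u^{-1}]}$. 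The identification of the surviving summand with $\pi^*\R[u,u^{-1}]$ is indeed formal, since $\iota^*\pi^*=\id$ makes $\pi^*$ split injective and a split injection $\R[u,u^{-1}]\to\R[u,u^{-1}]$ of graded modules is an isomorphism.

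The genuine shortfall is coverage, not correctness. Your argument proves only the second assertion, and only for simply connected \emph{manifolds}: Chen's iterated integral and Theorem~\ref{thm:GJP} are unavailable for a general path connected topological space, and the first assertion --- that $H^*_{[[u,u^{-1}]}(LX)$ depends only on $\pi_1(X)$ --- is not touched by this method at all (it requires Goodwillie's actual argument with the cyclic bar construction on chains of the based loop space, or an equivalent). Since the paper only ever invokes the theorem for simply connected manifolds, your special case suffices for its applications and has the merit of making that part self-contained; but it should not be presented as a proof of the theorem as stated.
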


{\bf Smooth $S^1$-spaces. }
Let now $Y$ be a differentiable space in the sense of Chen~\cite{Chen77}
equipped with a smooth circle action. For example, $Y$ could be a
finite dimensional manifold or an infinite dimensional Banach or
Fr\'echet manifold. Our main example is the space $Y=LX$ of smooth
maps $S^1\to X$ into a finite dimensional manifold $X$ with the
natural circle action $\phi(s,\gamma):=\gamma(\cdot+s)$ by
reparametrization. Following Chen~\cite{Chen77}, to such a {\em smooth
$S^1$-space} $Y$ we associate a mixed complex $(\Om^*(Y),d,P)$ as
follows. Let $(\Om^*(Y),d)$ be the space of differential forms on $Y$
with exterior derivative. The inner product with the vector field $v$ on
$Y$ generating the circle action gives a map
$\iota:\Om^*(Y)\to\Om^{*-1}(Y)$. By Cartan's formula, $d\iota+\iota
d=L_v$ is the Lie derivative in direction of $v$. Let
$A:\Om^*(Y)\to\Om^*(Y)$ be the operator averaging over the circle
action. In view of the relations $\iota A=A\iota$ and $dA=Ad$,
Cartan's formula implies that 
$$
   P:=A\iota:\Om^*(Y)\to\Om^{*-1}(Y)
$$
satisfies $dP+Pd=0$. The relation $\iota^2=0$ implies $P^2=0$, so 
$(\Om^*(Y),d,P)$ is indeed a mixed complex. 
It is shown in~\cite{Chen77} that for ``nice'' smooth $S^1$-spaces
(such as loop spaces of manifolds) the homology of $(\Om^*(Y),d)$ is
isomorphic to the singular cohomology $H^*(Y;\R)$. 

To a smooth $S^1$-space $Y$ we can associate another canonical mixed
complex $(\Om_{S^1}^*(Y),d,\iota)$, where $\Om^*_{S^1}(Y)$ denotes the
space of $S^1$-invariant forms.  

\begin{proposition}[Jones--Petrack~\cite{Jones-Petrack}]
The inclusion $\Om_{S^1}^*(Y)\into \Om^*(Y)$ induces a homotopy
equivalence of mixed complexes 
$$
   (\Om_{S^1}^*(Y),d,\iota) \to (\Om^*(Y),d,P).
$$
In particular, it induces isomorphisms on all versions of cyclic
homology. 
\end{proposition}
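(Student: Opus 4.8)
The plan is to exhibit the averaging operator $A$ as an explicit homotopy inverse to the inclusion $i:\Om_{S^1}^*(Y)\into\Om^*(Y)$, and then to invoke the general fact (recorded just before Proposition~\ref{prop:quism-invariance}) that a homotopy equivalence of mixed complexes induces isomorphisms on all versions of cyclic cohomology. First I would check that $i$ and $A$ are morphisms of mixed complexes. For $i$ this is immediate: it commutes with $d$, and if $\alpha$ is $S^1$-invariant then so is $\iota\alpha$, whence $Pi\alpha=A\iota\alpha=\iota\alpha=i\iota\alpha$, i.e.\ $Pi=i\iota$. For $A:(\Om^*(Y),d,P)\to(\Om_{S^1}^*(Y),d,\iota)$ I would use the identities $dA=Ad$ and $\iota A=A\iota$ from the text together with idempotency $A^2=A$ (averaging an already invariant form changes nothing); these give $\iota A=A\iota=A(A\iota)=AP$, so $A$ is a morphism of mixed complexes. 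Since averaging fixes invariant forms, $Ai=\id$ holds on the nose.

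It then remains to construct a homotopy of mixed complexes from $iA$ to $\id$ on $\Om^*(Y)$. Parametrising $S^1=\R/\Z$ and writing $\phi_s:Y\to Y$ for the time-$s$ map, so that $A=\int_0^1\phi_s^*\,ds$ and $\frac{d}{ds}\phi_s^*=\phi_s^*L_v$, I would set
$$
   H:=\int_0^1(1-s)\,\phi_s^*\iota\,ds:\Om^*(Y)\to\Om^{*-1}(Y).
$$
Cartan's formula $L_v=d\iota+\iota d$ together with a single integration by parts in $s$ (using $\phi_0^*=\id$) then yields the first homotopy relation
$$
   dH+Hd=\int_0^1(1-s)\,\phi_s^*L_v\,ds=\int_0^1(1-s)\,\frac{d}{ds}\phi_s^*\,ds=A-\id.
$$
This is the routine chain-homotopy step.

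The decisive point, and the one I expect to be the main obstacle, is the second relation demanded by the definition of a homotopy of mixed complexes, $PH+HP=0$; its validity depends on having placed the factor $\iota$ on the correct (right) side of $H$. Because $\iota$ commutes with $A$ and with every $\phi_s^*$ (as the generating vector field $v$ is $S^1$-invariant) and because $\iota^2=0$, both composites vanish term by term:
$$
   HP=\int_0^1(1-s)\,\phi_s^*\,(\iota A\iota)\,ds=0,\qquad
   PH=A\int_0^1(1-s)\,\phi_s^*\,\iota^2\,ds=0,
$$
using $\iota A\iota=A\iota^2=0$ and $\iota\phi_s^*=\phi_s^*\iota$. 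Hence $H$ is a homotopy of mixed complexes from $iA$ to $\id$, so $i$ is a homotopy equivalence of mixed complexes with inverse $A$, and the ``in particular'' statement about all versions of cyclic cohomology follows. The argument is purely formal in the operators $d$, $\iota$, $A$ and $\phi_s^*$, so it applies verbatim on Chen's differentiable spaces once these operations and Cartan's formula are available, as assumed here.
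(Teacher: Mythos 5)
Your proof is correct. The paper itself offers no argument for this proposition --- it is simply quoted from Jones--Petrack --- so there is nothing internal to compare against; what you give is the standard proof, and every identity you need checks out: $i$ and $A$ are morphisms of mixed complexes because $\iota$ preserves invariant forms and $AP=A^2\iota=A\iota=\iota A$; the homotopy $H=\int_0^1(1-s)\,\phi_s^*\iota\,ds$ satisfies $dH+Hd=\int_0^1(1-s)\,\tfrac{d}{ds}\phi_s^*\,ds=A-\id$ by Cartan's formula and one integration by parts; and the extra condition $PH+HP=0$ required by the paper's definition of a homotopy of mixed complexes holds because $\iota$ commutes with $A$ and with each $\phi_s^*$ and squares to zero. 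Together with $Ai=\id$ this exhibits $i$ as a homotopy equivalence, and the ``in particular'' clause follows from the remark preceding Proposition~\ref{prop:quism-invariance} that homotopy equivalences induce isomorphisms on all eight versions.
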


{\bf Fixed point free $S^1$-actions. }
Let $Y$ be a smooth $S^1$-space without fixed points. Then there 
exists a connection form, i.e., an invariant $1$-form $\alpha\in\Om^1_{S^1}(Y)$
satisfying $\alpha(v)=1$ for the vector field $v$ generating the action. 
The wedge product with $\alpha$ then defines a chain homotopy
$H:\Om^*_{S^1}(Y)\to \Om^{*+1}_{S^1}(Y)$
satisfying
$$
   \iota H+H\iota = \id.
$$
Consider the tautological sequence
\begin{equation*}
\begin{CD}
   H^{*+1}_{[u,u^{-1}]]}(Y) @>{p_*}>> H^{*+1}_{[[u^{-1}]]}(Y) @>{P_0}>>
   H^{*}_{[u]}(Y) @>{\cdot u}>> H^{*+2}_{[u,u^{-1}]]}(Y).
\end{CD}
\end{equation*}
The map $P_0$ is the composition of the first two maps in 
\begin{equation*}
\begin{CD}
   H^{*+1}_{[[u^{-1}]]}(Y) @>>> H^*(\ker\iota)=H^*(Y/S^1) 
   @>>> H^{*}_{[u]}(Y) @>{\cong}>> H^{*}_{[[u]]}(Y),
\end{CD}
\end{equation*}
where the last map is an isomorphism because the complex $\Om_{S^1}^*(Y)$
lives in nonnegative degrees. The homotopy formula for $H$ shows that
the double complex $(\Om_{S^1}^*(Y)[u,u^{-1}]],d+\iota u)$ for the computation
of $H^{*}_{[u,u^{-1}]]}(Y)$ has exact rows. Now a standard zigzag
argument as in the proof of Lemma~\ref{lem:cyccomplex-computation}
yields 

\begin{lemma}\label{lem:free-action}
Let $Y$ be a smooth $S^1$-space without fixed points. Then
$H^{*}_{[u,u^{-1}]]}(Y)=0$ and we have canonical isomorphisms 
\begin{equation*}
\begin{CD}
   H^{*+1}_{[[u^{-1}]]}(Y) @>{\cong}>> H^*(Y/S^1) 
   @>{\cong}>> H^{*}_{[u]}(Y) @>{\cong}>> H^{*}_{[[u]]}(Y).
\end{CD}
\end{equation*}
\hfill$\square$
\end{lemma}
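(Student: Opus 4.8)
The plan is to compute everything on the homotopy-equivalent mixed complex $(\Om^*_{S^1}(Y),d,\iota)$ furnished by the Jones--Petrack equivalence, and to exploit the contracting homotopy $H=\alpha\wedge(\cdot)$ for $\iota$. First I would record the consequences of the relation $\iota H+H\iota=\id$: the complex $(\Om^*_{S^1}(Y),\iota)$ is acyclic, so $\ker\iota=\im\iota$, and this common subcomplex is precisely the basic forms, which compute $H^*(Y/S^1)$; in particular $\iota$ induces an isomorphism of complexes $(\Om^*_{S^1}(Y)/\im\iota,d)\cong(\im\iota,d)$. I then view $\Om^*_{S^1}(Y)[u,u^{-1}]]$ as a bicomplex with vertical differential $d$ and horizontal differential $u\iota$; each row is a copy of the $\iota$-complex and is therefore exact, with explicit contraction $u^{-1}H$.

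Next I would run the zigzag argument, using the proof of Lemma~\ref{lem:cyccomplex-computation} as a template. Inverting the horizontal differential $u\iota$ by $u^{-1}H$ lowers the power of $u$, so starting from a closed element that is polynomial in $u$ (bounded above) the successive corrections proceed towards $u\to-\infty$ and assemble into a power series in $u^{-1}$. This is exactly an element of $C[u,u^{-1}]]$, which is why the argument produces a primitive here and yields $H^*_{[u,u^{-1}]]}(Y)=0$. The same zigzag shows that the edge map $H^{*+1}_{[[u^{-1}]]}(Y)\to H^*(\Om^*_{S^1}(Y)/\im\iota)\cong H^*(Y/S^1)$, i.e.\ the projection $\pi_0$ onto the constant-$u$ coefficient (the analogue of $e_*$ in Lemma~\ref{lem:cyccomplex-computation}), is an isomorphism; this is the first isomorphism in the statement.

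Finally I would assemble the chain. Feeding $H^*_{[u,u^{-1}]]}(Y)=0$ into the tautological exact sequence (Proposition~\ref{prop:gysin}) collapses it to an isomorphism $P_0\colon H^{*+1}_{[[u^{-1}]]}(Y)\to H^*_{[u]}(Y)$. Since $P_0$ factors as the edge map above followed by the map $b\colon H^*(Y/S^1)\to H^*_{[u]}(Y)$, and the edge map is already known to be an isomorphism, $b$ is an isomorphism as well; this is the middle arrow. The last arrow $H^*_{[u]}(Y)\to H^*_{[[u]]}(Y)$ is an isomorphism because $\Om^*_{S^1}(Y)$ is concentrated in nonnegative degrees, so in each total degree only finitely many powers of $u$ occur and $C[u]$ and $C[[u]]$ coincide.

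The main obstacle is the zigzag of the second paragraph: one must check that the corrections really terminate at the top and converge downward, i.e.\ that the primitive lands in the completion $C[u,u^{-1}]]$ and not merely in a formal series with unbounded positive powers of $u$. This boundedness-above together with convergence-below is the precise feature distinguishing the vanishing version $[u,u^{-1}]]$ from the others, and it is where the polynomial-versus-power-series bookkeeping has to be handled with care.
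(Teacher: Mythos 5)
Your proposal is correct and follows essentially the same route as the paper: pass to the invariant-forms model $(\Om^*_{S^1}(Y),d,\iota)$, use the connection form to get the contracting homotopy $H$ with $\iota H+H\iota=\id$ so that the rows of the $[u,u^{-1}]]$ double complex are exact, run the zigzag argument of Lemma~\ref{lem:cyccomplex-computation} to get the vanishing and the edge isomorphism onto $H^*(Y/S^1)$, and then read off the remaining isomorphisms from the tautological sequence and the fact that the complex lives in nonnegative degrees. Your attention to the direction of the zigzag (corrections going to negative powers of $u$, hence landing in the completion $C[u,u^{-1}]]$) is exactly the point the paper relies on implicitly.
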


\begin{remark}
Consider a smooth $S^1$-space $Y$ (possibly with fixed points). 
Then the canonical projection $Y\times ES^1\to Y$ induces a morphism of
mixed complexes 
$$
   (\Om^*_{S^1}(Y),d,\iota)\to (\Om^*_{S^1}(Y\times ES^1),d,\iota)
$$
which induces an isomorphism on $d$-homology, and thus on the
$[[u]]$, $[u^{-1}]$ and $[[u,u^{-1}]$ versions of cyclic homology. 
Since $Y\times ES^1$ has no fixed points, applying
Lemma~\ref{lem:free-action} to it provides an alternative proof of
Proposition~\ref{prop:Borel} in the smooth case. 
\end{remark}

{\bf Fixed point localization. }
The version $H^*_{[[u,u^{-1}]]}(Y)$ satisfies the following fixed
point localizaton theorem. 

\begin{thm}[Jones--Petrack~\cite{Jones-Petrack}]\label{thm:JP}
Let $Y$ be a smooth $S^1$-space whose fixed point set $F\subset Y$ is
a smooth submanifold which has an $S^1$-invariant tubular neighbourhood.
Then the inclusion $c:F\into Y$ induces an isomorphism
$$
   c^*:H^*_{[[u,u^{-1}]]}(Y)\stackrel{\cong}\longrightarrow H^*(F)\otimes\R[u,u^{-1}].
$$
In particular, for a manifold $X$ the inclusion $c:X\into LX$ of the
constant loops induces an isomorphism
$$
   c^*:H^*_{[[u,u^{-1}]]}(LX)\stackrel{\cong}\longrightarrow H^*(X)\otimes\R[u,u^{-1}].
$$
\end{thm}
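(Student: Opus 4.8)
The plan is to replace $(\Om^*(Y),d,P)$ by the homotopy-equivalent mixed complex of invariant forms $(\Om^*_{S^1}(Y),d,\iota)$ (Jones--Petrack), so that $H^*_{[[u,u^{-1}]]}(Y)$ becomes the homology of $(\Om^*_{S^1}(Y)[[u,u^{-1}]],d+u\iota)$, and then to \emph{localize} this homology to $F$ by an equivariant Mayer--Vietoris argument. The geometric input is a single vanishing statement: \emph{for a smooth $S^1$-space $Z$ without fixed points one has $H^*_{[[u,u^{-1}]]}(Z)=0$.} Granting this, the theorem follows by cutting $Y$ into a tubular neighbourhood of $F$ and its fixed-point-free complement.

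I would prove the vanishing first, as it is the heart of the matter. Choose a connection form $\alpha\in\Om^1_{S^1}(Z)$ with $\alpha(v)=1$ and put $H:=\alpha\wedge(-)$, so that $\iota H+H\iota=\id$ as recorded above. Writing $\partial:=d+u\iota$ and $G:=u^{-1}H$ on $\Om^*_{S^1}(Z)[[u,u^{-1}]]$, a direct computation gives
$$\partial G+G\partial=\id+N,\qquad N:=u^{-1}(dH+Hd).$$
Since $N=\{\partial,G\}-\id$ is built from the square-zero differential $\partial$, it commutes with $\partial$; and $N$ lowers the power of $u$ by one while raising form degree by two. Because invariant forms live in non-negative degrees, each fixed power of $u$ in $(\id+N)^{-1}x=\sum_{k\geq 0}(-N)^kx$ receives only finitely many contributions, so the series converges in $\Om^*_{S^1}(Z)[[u,u^{-1}]]$; the output runs through arbitrarily negative powers of $u$, which is exactly why the completion in the $u^{-1}$-direction built into the $[[u,u^{-1}]]$ flavour is indispensable. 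Then $(\id+N)^{-1}$ also commutes with $\partial$, so $G(\id+N)^{-1}$ is a contracting homotopy, proving $H^*_{[[u,u^{-1}]]}(Z)=0$. Note that the zigzag proof of Lemma~\ref{lem:free-action} for the $[u,u^{-1}]]$ version does not survive passage to the infinite-dimensional $Z=LX\setminus X$, whereas the present argument does.

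Next I would localize. Pick an $S^1$-invariant tubular neighbourhood $U$ of $F$; then $\{U,Y\setminus F\}$ is an invariant open cover, and averaging a subordinate partition of unity over the circle yields a short exact sequence of mixed complexes
$$0\to\Om^*_{S^1}(Y)\to\Om^*_{S^1}(U)\oplus\Om^*_{S^1}(Y\setminus F)\to\Om^*_{S^1}(U\setminus F)\to 0.$$
The functor $(-)[[u,u^{-1}]]$ is exact (in each degree it amounts to a direct product of the graded pieces, which is exact over a field), so applying it together with the differential $d+u\iota$ gives a Mayer--Vietoris long exact sequence in $H^*_{[[u,u^{-1}]]}$. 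Both $Y\setminus F$ and $U\setminus F$ are fixed-point-free, so the vanishing statement annihilates two out of every three terms and the restriction $H^*_{[[u,u^{-1}]]}(Y)\to H^*_{[[u,u^{-1}]]}(U)$ is an isomorphism. Finally $F\hookrightarrow U$ is an $S^1$-equivariant deformation retract, hence a homotopy equivalence of mixed complexes, and therefore induces an isomorphism $c^*:H^*_{[[u,u^{-1}]]}(U)\stackrel{\cong}\longrightarrow H^*_{[[u,u^{-1}]]}(F)$. On $F$ the generating vector field vanishes, so $\iota=0$ and the complex is $(\Om^*(F)[[u,u^{-1}]],d)$ with $d$ acting coefficientwise; because $H^*(F)$ is bounded, only finitely many powers of $u$ survive in each total degree, giving $H^*_{[[u,u^{-1}]]}(F)=H^*(F)\otimes\R[u,u^{-1}]$. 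Composing the three isomorphisms identifies the restriction $c^*$ with the asserted isomorphism, and specializing to $Y=LX$ with $F=X$ the constant loops (the fixed locus of the reparametrization action) gives the last statement.

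The main obstacle is the vanishing step. Both the convergence of $\sum_k(-N)^k$ inside the completed complex and the resulting contractibility hinge on the interplay between the non-negativity of form degree and the allowance of infinitely many negative powers of $u$; it is precisely here that the $[[u,u^{-1}]]$ flavour behaves differently from the other seven, and this is what makes fixed point localization a feature of this version alone.
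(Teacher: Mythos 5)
The paper offers no proof of Theorem~\ref{thm:JP} --- it is quoted from \cite{Jones-Petrack} --- so there is nothing internal to compare against; your proposal is, in substance, a correct reconstruction of the Jones--Petrack argument itself. The heart of it, the contracting homotopy $G(\id+N)^{-1}$ with $G=u^{-1}(\alpha\wedge\cdot)$ and $N=u^{-1}(dH+Hd)$, is exactly their mechanism, and your convergence argument is right: $N$ trades one power of $u$ for two units of form degree, so for a fixed output power $u^j$ only the terms of $\sum_k(-N)^k$ with $2k$ at most the relevant form degree contribute. Two steps should be made explicit. First, the Mayer--Vietoris short exact sequence of invariant forms needs a smooth $S^1$-invariant partition of unity subordinate to $\{U,\,Y\setminus F\}$; averaging handles invariance, but on an infinite-dimensional $Y$ such as $LX$ the existence of a smooth subordinate partition of unity is not automatic and must be supplied --- this is the one place where the ``smooth $S^1$-space'' hypotheses really enter. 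Second, since $H^*_{[[u,u^{-1}]]}$ is \emph{not} quasi-isomorphism invariant, the retraction $U\simeq F$ must be upgraded to a homotopy equivalence of mixed complexes: the de Rham homotopy operator $K=\int_0^1 h_t^*\iota_{\xi_t}\,dt$ does anticommute with $\iota_v$ (equivariance of $h_t$ makes $h_t^*$ commute with $\iota_v$, and contractions with two vector fields anticommute), but this check is load-bearing and should appear rather than being absorbed into the phrase ``homotopy equivalence of mixed complexes''. Lastly, your closing claim that localization is a feature of this flavour \emph{alone} overshoots: the same contraction preserves $C[u,u^{-1}]]$, consistent with Lemma~\ref{lem:free-action}, and only the flavours polynomial in $u^{-1}$ genuinely escape the argument.
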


%%%
\subsection{Examples}
%%%

In this subsection we work out the different flavours of cyclic
homology for some examples. In the case of a dga $A$ we will use the
equivalent sign convention~\eqref{eq:cyc-signs-Loday}, dropping the
$\sim$'s, which spells out as
\begin{align*}
%   d(a_0|\dots|a_n) &= \sum_{i=0}^n(-1)^{n+|a_0|+\dots+|a_{i-1}|}
%   (a_0|\dots|da_i|\dots|a_n), \cr 
   b'(a_0|\dots|a_n) &= \sum_{i=0}^{n-1}(-1)^i (a_0|\dots|
   a_ia_{i+1}|\dots|a_n), \cr
   b(a_0|\dots|a_n) &= b'(a_0|\dots| a_n) + (-1)^{n+\deg a_n(\deg
     a_0+\dots+\deg a_{n-1})} (a_na_0|a_1|\dots|a_{n-1}), \cr 
   t(a_0|\dots|a_n) &= (-1)^{n+\deg a_n(\deg a_0+\dots+\deg a_{n-1})}
   (a_n|a_0|\dots|a_{n-1}), \cr 
%   N(a_0|\cdots|a_n) &=
%   \sum_{i=0}^n(-1)^{ni+(|a_0|+\cdots+|a_{i-1}|)(|a_i|+\cdots+|a_n|)} 
%   (a_i|\cdots|a_n|a_0|\cdots|a_{i-1}), \cr
   s(a_0|\dots|a_n) &= (1|a_0|\dots|a_n),\cr
%   B(a_0|\cdots|a_n) &=
%   \sum_{i=0}^n(-1)^{ni+(|a_0|+\cdots+|a_{i-1}|)(|a_i|+\cdots+|a_n|)} \cr 
%   & \ \ \ \Bigl[(1|a_i|\cdots|a_n|a_0|\cdots|a_{i-1})
%   - (a_i|1|a_{i+1}|\cdots|a_n|a_0|\cdots|a_{i-1})\Bigr].
\end{align*}
%Here we choose $t$ as in~\cite{Loday}, which differs from~\cite{Jones}.

\begin{example}[de Rham complex of a point]\label{ex:point-deRham}
Consider the trivial dga $(A=\R,d=0)$ sitting in degree $0$, which is the de Rham complex of
a point. Its Hochschild complex $C(\R)$ has the basis
$1^n:=(1|\dots|1)$ of words with $n$ $1$'s of degree $|1^n|=1-n$, for
$n\geq 1$. From the definitions (and being careful about signs) we
read off the operations
\begin{align*}
   b(1^n) &= \begin{cases}
      1^{n-1} & n\geq 3\text{ odd},\\
      0 & \text{else}, 
   \end{cases} \cr
   t(1^n) &= (-1)^{n-1}1^n, \cr
   N(1^n) &= \begin{cases}
      n1^{n} & n\text{ odd},\\
      0 & \text{else},
   \end{cases} \cr
   sN(1^n) &= \begin{cases}
      n1^{n+1} & n\text{ odd},\\
      0 & \text{else},
   \end{cases} \cr
   B(1^n) &= \begin{cases}
      2n1^{n+1} & n\text{ odd},\\
      0 & \text{else}.
   \end{cases}
\end{align*}
This gives us the mixed complex $(C=C(\R),b,B)$. As shown in Figure~\ref{fig2},
cycles in the complex $(C[[u,u^{-1}]],\delta_u=b+uB)$ consist of zigzags in the
lower half plane starting on the horizontal axis and extending indefinitely to the right. 

\begin{figure}
\labellist
\small
\pinlabel* $1$ at 191 316 
\pinlabel* $1^2$ at 189 255
\pinlabel* $1^3$ at 189 203 
\pinlabel* $1^4$ at 189 151 
\pinlabel* $1^5$ at 189 98
\pinlabel* $u^21$ at 305 317
\pinlabel* $u^21^2$ at 318 262
\pinlabel* $u^21^3$ at 318 208
\pinlabel* $u^21^4$ at 318 158
\pinlabel* $u^21^5$ at 318 104
\pinlabel* $u1$ at 252 316
\pinlabel* $u1^2$ at 263 262
\pinlabel* $u1^3$ at 263 208
\pinlabel* $u1^4$ at 263 158
\pinlabel* $u1^5$ at 263 104
\pinlabel* $b$ at 296 120
\pinlabel* $b$ at 244 226
\pinlabel* $b$ at 205 226
\pinlabel* $b$ at 205 120
\pinlabel* $B$ at 230 183
\pinlabel* $B$ at 230 287
\pinlabel* $B$ at 282 183
\pinlabel* $B$ at 331 80
\endlabellist 
\centering
  \includegraphics[width=1.0\textwidth]{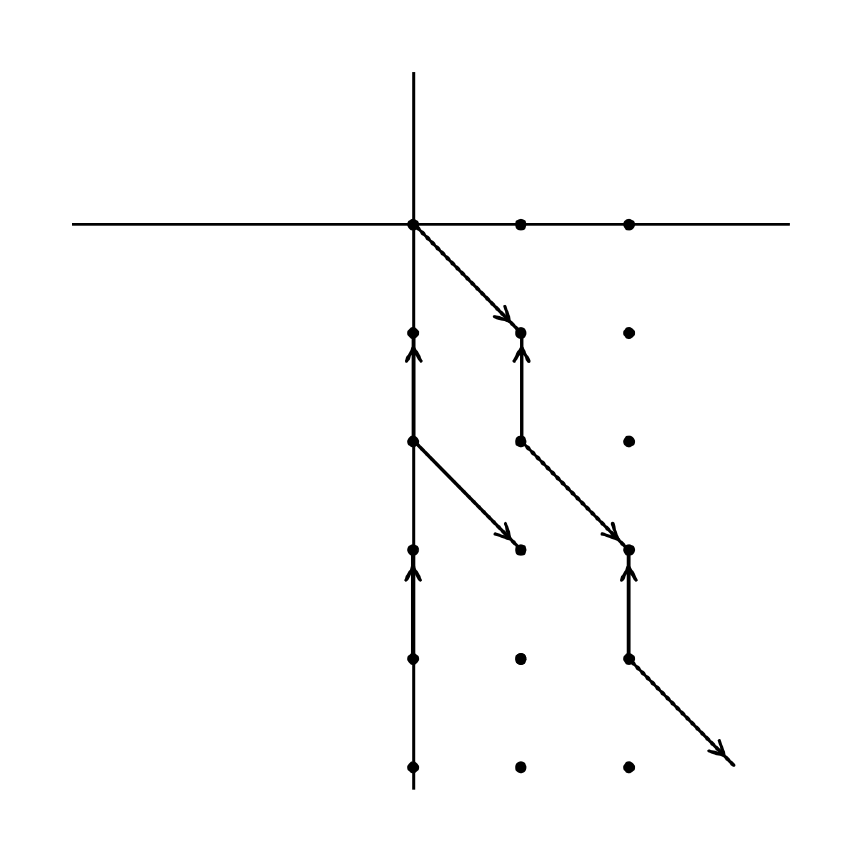}
  \caption{Cyclic complex of the trivial dga $(A=\R,d=0)$}
  \label{fig2}
\end{figure}

Thus it makes no difference whether we consider polynomials or
power series in $u^{-1}$. Using this, we read off the Hochschild and
cyclic homology groups (where $[i]$ denotes degree shift by $i$)
\begin{align*}
   &H(C,\delta)=\R, \cr
   &HC^{[u]}=\R[u^{-1}][-1]\quad\text{generated by }1^2\sim u1^4\sim\cdots, \cr
   &HC^{[u,u^{-1}]}=HC^{[u,u^{-1}]]}=0, \cr
   &HC^{[u^{-1}]}=HC^{[[u^{-1}]]}=\R[u^{-1}]\quad\text{generated by }u^{-k}1+u^{-k+1}1^3+\cdots+1^{2k+1}, k\geq 0, \cr
   &HC^{[[u]]}=\R[u]\quad\text{generated by }1+u1^3+u^21^5\cdots, \cr
   &HC^{[[u,u^{-1}]]}=HC^{[[u,u^{-1}]}=\R[u,u^{-1}]\quad\text{generated by
  }1+u1^3+u^21^5\cdots. 
\end{align*}
So we see two types of tautological sequences:
\begin{gather}\label{eq:taut-ex1}
\xymatrix{
    0 \ar[r] & HC_{*-2}^{[[u]]} \ar@{=}[d] \ar[r]^{\cdot u} & HC_*^{[[u,u^{-1}]}
      \ar@{=}[d] \ar[r]^{p_*} & HC_*^{[u^{-1}]} \ar@{=}[d] \ar[r] & 0 \\
    0 \ar[r] & \R[u][-2] \ar[r]^{\cdot u} & \R[u,u^{-1}] \ar[r]^{p_*}
    & \R[u^{-1}] \ar[r] & 0
}
\end{gather}
and
\begin{gather*}
\xymatrix{
   HC_*^{[u,u^{-1}]} \ar@{=}[d] \ar[r] & HC_*^{[u^{-1}]} \ar@{=}[d]
   \ar[r]^{D_0}_{\cong} & HC_{*-1}^{[u]} \ar@{=}[d] \ar[r] &
   HC_{*+1}^{[u,u^{-1}]} \ar@{=}[d] \\
   0 \ar[r] & \R[u^{-1}]\cdot 1 \ar[r]^{1\mapsto 1^2}_{\cong} & \R[u^{-1}]\cdot
   1^2 \ar[r] & 0\;.
}
\end{gather*}
The normalized complex is given by $\ol{C}_0(\R)=\R$ sitting in degree
$0$ and $\ol{C}_n(\R)=0$ for $n\geq 1$, so the normalized cylic homologies are
\begin{gather*}
   H\ol{C}^{[u]} = H\ol{C}^{[[u]]} = \R[u],\qquad 
   H\ol{C}^{[u^{-1}]} = H\ol{C}^{[[u^{-1}]]} = \R[u^{-1}],\cr
   H\ol{C}^{[u,u^{-1}]} = H\ol{C}^{[[u,u^{-1}]} =
   H\ol{C}^{[u,u^{-1}]]} = H\ol{C}^{[[u,u^{-1}]]} = \R[u,u^{-1}].  
\end{gather*}
\end{example}

\begin{example}[singular cochains on a point]\label{ex:point-sing}
Consider the trivial mixed complex $(C=\R,\delta=D=0)$, which is the singular
cochain complex of a point viewed as an $S^1$-space. The homologies
can be directly read off to be
\begin{align*}
   &H(C,\delta)=\R, \cr
   &HC^{[u]}=HC^{[[u]]}=\R[u], \cr
   &HC^{[u^{-1}]}=HC^{[[u^{-1}]]}=\R[u^{-1}], \cr
   &HC^{[u,u^{-1}]}=HC^{[u,u^{-1}]]}=HC^{[[u,u^{-1}]}=HC^{[[u,u^{-1}]]}=\R[u,u^{-1}].
\end{align*}
Here all tautological sequences look like~\eqref{eq:taut-ex1} above. 
\end{example}

\begin{example}[singular cochains on $ES^1$]\label{ex:ES1-sing}
Consider the mixed complex $(C,\delta,D)$, where $C=\Lambda[\alpha,\beta]$ is
the exterior algebra in two generators of degrees $|\alpha|=1$ and
$|\beta|=2$ with the operations defined by the Leibniz rule and
$$
   \delta\alpha=\beta,\quad
   \delta\beta=0,\quad
   D\alpha=1,\quad
   D\beta=0.
$$
This is the Cartan--Weil model for the classifying space
$ES^1=S^\infty$ with its standard $S^1$-action,
see~\cite{Atiyah-Bott84}. As shown in Figure~\ref{fig3}, 
cycles in the complex $(C[[u,u^{-1}]],\delta_u=\delta+uD)$ consist of zigzags
in the upper half plane starting on the horizontal axis and extending
indefinitely to the left. 

\begin{figure}
\labellist
\small 
\pinlabel* $1$ at 190 65
\pinlabel* $\delta$ at 203 147
\pinlabel* $\delta$ at 203 254
\pinlabel* $D$ at 225 108
\pinlabel* $u^{-1}1$ at 141 65
\pinlabel* $u^{-1}\alpha$ at 125 124
\pinlabel* $u^{-1}\beta$ at 125 174
\pinlabel* $u^{-2}\alpha\beta$ at 72 225
\pinlabel* $\alpha$ at 184 126
\pinlabel* $\beta$ at 184 177
\pinlabel* $\alpha\beta$ at 182 231
\pinlabel* ${\beta}^2$ at 184 283
\pinlabel* $u1$ at 248 65
\endlabellist 
\centering
  \includegraphics[width=1.0\textwidth]{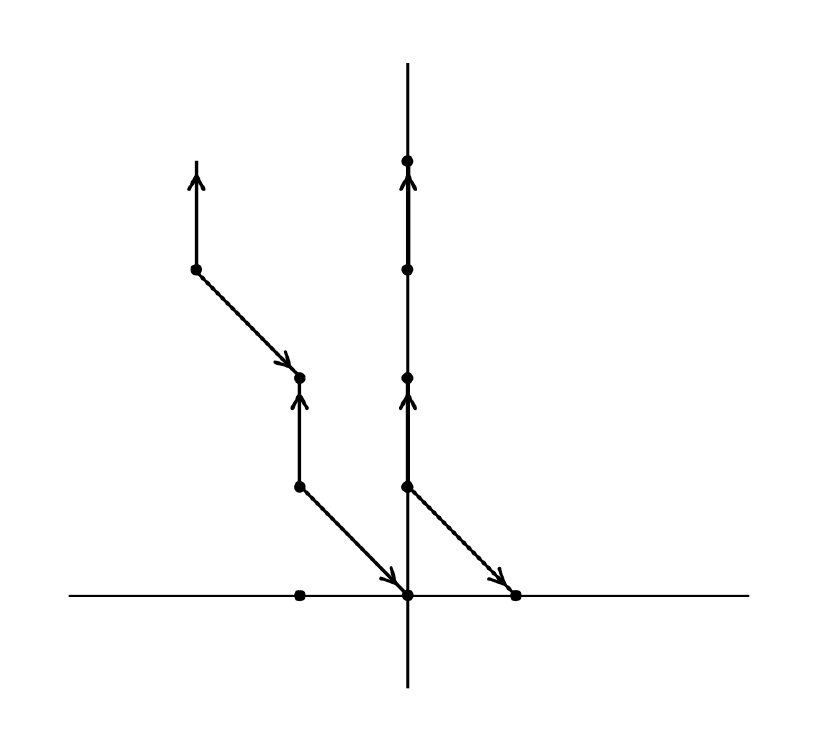}
  \caption{Cyclic complex of singular cochains on $ES^1$}
  \label{fig3}
\end{figure}

Thus it makes no difference whether we consider polynomials or
power series in $u$. Using this, we read off the Hochschild and
cyclic homology groups
\begin{align*}
   &H(C,\delta)=\R, \cr
   &HC^{[[u]]}=HC^{[u]}=\R[u]\quad\text{generated by $1$ (note that $\beta\sim u$)}, \cr
   &HC^{[[u,u^{-1}]]}=HC^{[u,u^{-1}]]}=0, \cr
   &HC^{[[u^{-1}]]}=\R[u][1]\quad\text{generated by }\alpha(1+u^{-1}\beta+u^{-2}\beta^2+\cdots, \cr
   &HC^{[[u,u^{-1}]}=HC^{[u,u^{-1}]}=\R[u,u^{-1}]\quad\text{generated by }1, \cr
   &HC^{[u^{-1}]}=\R[u^{-1}]\quad\text{generated by }1,u^{-1},u^{-2},\dots. 
\end{align*}
So we see two types of tautological sequences:
\begin{gather*}
\begin{CD}
    @>0>> HC_{*-2}^{[u]} @>{\cdot u}>> HC_*^{[u,u^{-1}]} @>{p_*}>> HC_*^{[u^{-1}]} @>0>> \\
    && @V=VV @V=VV @V=VV \\
    @>0>> \R[u][-2] @>{\cdot u}>> \R[u,u^{-1}] @>{p_*}>> \R[u^{-1}] @>0>>
\end{CD}
\end{gather*}
and
\begin{gather*}
\begin{CD}
   HC_*^{[u,u^{-1}]]} @>>> HC_*^{[[u^{-1}]]} @>{D_0}>{\cong}> HC_{*-1}^{[u]} @>>> HC_{*+1}^{[u,u^{-1}]]}\\
   @V=VV @V=VV @V=VV @V=VV \\
   0 @>>> \R[u^{-1}]\cdot \alpha @>{\alpha\mapsto 1}>{\cong}> \R[u^{-1}]\cdot
   1 @>>> 0\;. \\
\end{CD}
\end{gather*}
\end{example}

\begin{lemma}\label{lem:examples-quiso}
The mixed complexes in Examples~\eqref{ex:point-deRham},
\eqref{ex:point-sing} and~\eqref{ex:ES1-sing} are quasi-isomorphic.
\end{lemma}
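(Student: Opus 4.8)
The plan is to exploit the fact that a quasi-isomorphism of mixed complexes only needs to induce an isomorphism on $\delta$-homology. Since all three mixed complexes in Examples~\ref{ex:point-deRham}, \ref{ex:point-sing} and~\ref{ex:ES1-sing} have $H^*(C,\delta)=\R$ concentrated in degree $0$ (as recorded in each example), it suffices to produce honest \emph{morphisms} of mixed complexes between them --- i.e.\ maps commuting with \emph{both} differentials --- each sending the degree-$0$ generator to the degree-$0$ generator. I would connect the three complexes by the chain
\[
   \text{Ex.~\ref{ex:point-deRham}} \xrightarrow{\ p\ }
   \text{Ex.~\ref{ex:point-sing}} \xrightarrow{\ \iota\ }
   \text{Ex.~\ref{ex:ES1-sing}},
\]
with both arrows quasi-isomorphisms of mixed complexes.

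For the first arrow I would observe that the normalized mixed complex $\ol C(\R)$ of the trivial dga $A=\R$ is, as already noted in Example~\ref{ex:point-deRham}, \emph{literally} the trivial mixed complex $(\R,0,0)$ of Example~\ref{ex:point-sing}: only the one-letter word $(1)$ survives normalization, and both induced operations $\overline{d+b}$ and $\ol B=sN$ vanish on it. By the standard acyclicity of the degenerate subcomplex $(D(\R),d+b)$ cited above from \cite[Proposition 1.15]{Loday}, the projection $p\colon C(\R)\to\ol C(\R)$ is a quasi-isomorphism of mixed complexes, which supplies the first arrow. For the second arrow I would define $\iota\colon(\R,0,0)\to(\Lambda[\alpha,\beta],\delta,D)$ by $1\mapsto 1$. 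This is a chain map for $\delta$ since $\delta 1=0$, and --- the one point that genuinely needs checking --- it commutes with $D$ as well: because $D$ is a degree $-1$ derivation, $D(1)=D(1\cdot 1)=2D(1)$ forces $D(1)=0$, even though $D\alpha=1$. On $\delta$-homology $\iota$ sends the generator $1$ to the generator $1$ of $H^*(\Lambda[\alpha,\beta],\delta)=\R$, hence is an isomorphism, so $\iota$ is a quasi-isomorphism of mixed complexes.

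Composing these (here a straight chain rather than a genuine zig-zag) shows that all three mixed complexes are quasi-isomorphic. The only nonroutine points are the two compatibilities with the \emph{second} differential: that normalization kills $\ol B$ on the surviving generator, so that the acyclicity result of \cite{Loday} applies verbatim as a statement about mixed complexes and not merely about the underlying $\delta$-complexes; and the observation that the unit is $D$-closed in the Cartan--Weil model despite $\alpha$ not being. I expect the main (and rather minor) obstacle to be the bookkeeping of the normalization step, namely confirming that $\ol C(\R)$ together with its induced operations $\overline{d+b}$ and $\ol B$ is exactly the complex of Example~\ref{ex:point-sing}; everything else follows from the degree reasons above.
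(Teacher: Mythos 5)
Your proof is correct and follows essentially the same route as the paper: both connect the three complexes by a direct chain of two morphisms of mixed complexes, and your first arrow (the normalization projection $C(\R)\to\ol C(\R)=(\R,0,0)$) is literally the paper's map $1^n\mapsto\delta_{n,1}\cdot 1$, while the second arrow $1\mapsto 1$ into $\Lambda[\alpha,\beta]$ is identical. The only cosmetic difference is that you justify the first arrow via the acyclicity of the degenerate subcomplex, whereas the paper simply notes that all three $\delta$-homologies equal $\R$ in degree $0$ and the maps send generator to generator.
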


\begin{proof}
The morphisms of mixed complexes
$$
   (C(\R),\delta=b,D=B)\to (\R,\delta=D=0),\qquad 1^n\mapsto\begin{cases}
      1 & n=1,\\ 0 & n\geq 2 \end{cases}
$$
from Example~\eqref{ex:point-deRham} to~\eqref{ex:point-sing} and
$$
   (\R,\delta=D=0)\to (\Lambda[\theta,\om],\delta,D),\qquad 1\mapsto 1
$$
from Example~\eqref{ex:point-sing} to~\eqref{ex:ES1-sing} are
quasi-isomorphisms because all homologies $H(C,\delta)$ equal $\R$. 
\end{proof}

Comparison of the homology groups in these three examples shows that
the $5$ remaining homologies in
Proposition~\ref{prop:quism-invariance} are not quasi-isomorphism
invariants of mixed complexes.

%%%%%%%%%%%%%%%%%%%%%%%%%%%%%%%%%%%%%%%%%%%%%%%%%%%%%%%%%%%%%%%%%%%%%%%%%%%%
\section{Chen's iterated integral}\label{sec:chen}
%%%%%%%%%%%%%%%%%%%%%%%%%%%%%%%%%%%%%%%%%%%%%%%%%%%%%%%%%%%%%%%%%%%%%%%%%%%%

In this section we study the behaviour of the various flavours of
cyclic homology under Chen's iterated integral, introduced by
K.T.~Chen in~\cite{Chen73,Chen77}. 

%%%
\subsection{Chen's iterated integral as a map of mixed complexes}
%%%
Let $X$ be a connected oriented manifold and $LX$ its free loop
space. For each $k\in\N$ the {\em Chen pairing} 
\begin{equation}\label{eq:chenpair}
   \la\cdot,\cdot\ra:\Om^{*+k}(X^{k+1})\times C_*(LX)\longrightarrow\R
\end{equation}
between differential forms on the $(k+1)$-fold product $X^{k+1}$ and
singular chains on $LX$ is defined as follows. Recall the standard
simplex
$$
   \Delta^k = \{(t_1,...,t_k)\in \R^k\mid 0\le t_1\le t_2\le...\le
   t_k\le 1\},\qquad \Delta^0=\{0\} 
$$
with its face maps (parametrizing the boundary faces)
$\delta_j:\Delta^{k-1}\longrightarrow \p\Delta^k$ defined by
\begin{gather*}
   \delta_0(t_1,...,t_{k-1})=(0,t_1,...,t_{k-1}),\qquad
   \delta_k(t_1,...,t_{k-1})=(t_1,...,t_{k-1},1), \cr
   \delta_j(t_1,...,t_{k-1})=(t_1,...,t_j,t_j,...,t_{k-1}),\qquad j=1,...,k-1. 
\end{gather*}
We give $\Delta^k$ the induced orientation from $\R^k$. Then
$\delta_0$ is orientation reversing, $\delta_1$ is orientation
preserving, and in general $\delta_j$ changes orientation by 
$(-1)^{j+1}$. Consider a simplex $B$ and a continuous map
$$
   f:B\longrightarrow LX.
$$
For any $k\ge 0$ we define the evaluation map
$$
   ev_f:B\times \Delta^k\longrightarrow X^{k+1},\quad
   ev_f(p,t_1,...,t_k) := \bigl(f(p)(0),f(p)(t_1),...,f(p)(t_k)\bigr). 
$$
Given $\om\in \Om^*(X^{k+1})$ and such a map $f$, we define the
pairing~\eqref{eq:chenpair} as 
$$
    \la\om,f\ra := \int_{B\times\Delta^k}ev_f^*\om.
$$
It gives rise to a degree preserving linear map, {\em Chen's iterated integral}
\begin{equation}\label{eq:chen}
   I:\bigoplus_{n\geq 0}\Om^{*+n}(X^{n+1})\to C^*(LX),\qquad (I\om)(f):=\la\om,f\ra. 
\end{equation}
Recall from Example~\ref{ex:forms} that the spaces $\Om^*(X^{n+1})$ for $n\in\N_0$ form a cyclic
cochain complex, so according to Lemma~\ref{lem:cyccomplex-dgmod} they
give rise to a mixed complex $(\bigoplus_{n\geq 0}\Om^*(X^{n+1}),d_H=d+b,B)$. 
The other side of equation~\eqref{eq:chen} carries the structure of a
mixed complex $(C^*(LX),d,P)$ provided by Lemma~\ref{lem:S1-space-dgmod}. 

\begin{prop}\label{prop:chen}
Chen's iterated integral $I$ defines a morphism of mixed complexes
$$
   I:\Bigl(\bigoplus_{n\geq 0}\Om^*(X^{n+1}),d_H,B\Bigr)\longrightarrow (C^*(LX),d,{-}P).
$$
\end{prop}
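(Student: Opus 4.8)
The plan is to verify the two defining identities of a morphism of mixed complexes separately, namely $d\circ I=I\circ d_H$ with $d_H=d+b$, and $P\circ I=I\circ B$. The first is essentially Stokes' theorem, while the second encodes the compatibility of the Connes operator $B$ with the circle action and is the geometric heart of the statement.

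For the first identity, fix a singular simplex $f:B\to LX$ and a form $\om\in\Om^*(X^{k+1})$. Since pullback commutes with the exterior derivative, $ev_f^*(d\om)=d(ev_f^*\om)$, and Stokes' theorem on the manifold-with-corners $B\times\Delta^k$ gives $\la d\om,f\ra=\int_{\p(B\times\Delta^k)}ev_f^*\om$. I would split the boundary into the two contributions $(\p B)\times\Delta^k$ and $B\times\p\Delta^k$, the latter carrying the sign $(-1)^{\dim B}$. The first piece reproduces the singular coboundary, i.e.\ $\la\om,\p f\ra=\la d(I\om),f\ra$. For the second piece I would restrict $ev_f$ to each face map $\delta_j:\Delta^{k-1}\to\p\Delta^k$ of the simplex: the face $t_1=0$ duplicates the base point and, via Example~\ref{ex:top} and Example~\ref{ex:forms}, realizes $d_0$; the interior faces $t_j=t_{j+1}$ realize $d_j$ for $1\le j\le k-1$; and the face $t_k=1$, using the loop condition $f(p)(1)=f(p)(0)$, realizes the cyclic face $d_k$. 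Summing over the faces with the orientation signs $(-1)^{j+1}$ of the $\delta_j$ assembles the operator $b$, so the $\p\Delta^k$-contribution equals $\la b\om,f\ra$ up to sign. Matching these signs against the convention \eqref{eq:cyc-signs} yields $d\circ I=I\circ(d+b)$.

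For the second identity I would work dually, using that $P$ is adjoint to $Q(c)=\phi_*(F_{S^1}\times c)$ and that $B=(1-t)sN$. Unravelling $\la I\om,Qf\ra$ produces the integral of $ev^*\om$ over $S^1\times B\times\Delta^k$, where the extra circle coordinate $s$ rotates the loop, so that the evaluation becomes $(f(p)(s),f(p)(s+t_1),\dots,f(p)(s+t_k))$. The key geometric step is to re-sort the $k+1$ marked times $s,s+t_1,\dots,s+t_k$ into their cyclic order on $S^1$: this identifies $S^1\times\Delta^k$, up to orientation, with a union of copies of $\Delta^{k+1}$, one for each position of the rotation point $s$ relative to the original marked points. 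Each copy corresponds to one term of $sN$ — the extra degeneracy $s$ inserting $f(p)(s)$ as a new base point and $N$ running over the cyclic insertions — while the antisymmetrization $1-t$ emerges from the two boundary orientations of the rotation parameter. I would then read off $\la I\om,Qf\ra=\la IB\om,f\ra$, i.e.\ $P\circ I=I\circ B$.

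The main obstacle is the sign and orientation bookkeeping in the second identity: one must check that the combinatorial decomposition of $S^1\times\Delta^k$ under rotation reproduces $B=(1-t)sN$ with exactly the signs of \eqref{eq:cyc-signs}. This is precisely why those signs were chosen, as noted in the remark following \eqref{eq:cyc-signs-Loday}, so once the decomposition is set up the verification is routine but delicate. By contrast, the first identity requires only the standard Stokes argument together with the identification of the faces of $\Delta^k$ with the face operators $d_j$.
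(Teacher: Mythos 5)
Your first identity is handled exactly as in the paper: Stokes' theorem on $B\times\Delta^k$, the splitting of the boundary into $\p B\times\Delta^k$ and $(-1)^{\dim B}B\times\p\Delta^k$, and the identification of the faces $\delta_j:\Delta^{k-1}\to\p\Delta^k$ (with their orientation signs $(-1)^{j+1}$) with the face operators $d_j$ of the cocyclic structure. That part is correct and complete in outline.

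The second identity contains a genuine gap. Your decomposition of $S^1\times\Delta^k$ into $k+1$ copies of $\Delta^{k+1}$, indexed by the cyclic position of the rotation point $s$ among the marked times, is the right geometric move and matches the paper's regions $R_i=\{t_{i-1}\le 1-s\le t_i\}$; summing over these pieces produces exactly the cyclic sum $N$ applied after the extra degeneracy $s$, i.e.\ it yields $\la I(sN\om),f\ra$ up to sign. But $B=(1-t)sN$, not $sN$, and your proposed source for the factor $(1-t)$ --- ``the two boundary orientations of the rotation parameter'' --- is not a real mechanism: you are decomposing a domain of integration, not applying Stokes, so the endpoints of the rotation interval contribute nothing (and on $S^1$ there are no endpoints at all). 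As it stands your computation would end with $\la I\om,Qf\ra=\pm\la I(sN\om),f\ra$ and no way to reconcile this with $\la I((1-t)sN\om),f\ra$. The missing ingredient, which is the one nontrivial observation in the paper's proof, is that $I(tsN\om)=0$: the form $tsN\om=\pm\tau_k^*\sigma_0^*N\om$ on $X^{k+1}$ does not depend on the variable $x_1$, so the contraction of $ev_f^*(tsN\om)$ with $\p_{t_1}$ vanishes and the integrand is pointwise zero. Hence $I\circ B=I\circ sN$ on the nose, and only the $sN$-part needs to be matched against the prism decomposition. Without this vanishing statement (or an equivalent one) the argument does not close.
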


\begin{proof}
Consider $\om\in\Om^*(X^{k+1})$ and $f:B\to LX$ as above. 
For compatibility with the differentials, we need to show
\begin{equation}\label{eq:dH}
   \la d_H\om,f\ra = \la\om,\p f\ra.
\end{equation}
Using Stokes' theorem, we rewrite the left hand side as
\begin{align*}
   \la d_H\om,f\ra
   &= \int_{B\times \Delta^k}ev_f^*d\om + \int_{B\times
     \Delta^{k-1}}ev_f^*(b\om) \cr
   &= \int_{\p(B\times \Delta^k)}ev_f^*\om + \int_{B\times
     \Delta^{k-1}}ev_f^*(b\om) \cr
   &= \int_{\p B\times \Delta^k}ev_f^*\om + (-1)^{\dim B}\int_{B\times
     \p\Delta^k}ev_f^*\om + \int_{B\times \Delta^{k-1}}ev_f^*(b\om) \cr
   &= \la\om,\p f\ra + \left((-1)^{|\om|+1}\int_{B\times
     \p\Delta^k}ev_f^*\om + \int_{B\times \Delta^{k-1}}ev_f^*(b\om)\right).
\end{align*}
Here for the signs in the last step we have used that $\deg d\om =
\dim (B\times \Delta^k)$ and thus $\dim B = \deg\om + 1 - k = |\om| +
1$. So for~\eqref{eq:dH} it remains to show 
\begin{equation}\label{eq:b}
   \int_{B\times\Delta^{k-1}}ev_f^*(b\om)= ( -1)^{|\om|}\int_{B\times
     \p\Delta^k}ev_f^*\om.
\end{equation}
For this, observe that the face maps $\delta_j:X^k\to X^{k+1}$ from
Example~\eqref{ex:top} and the face maps
$\delta_j:\Delta^{k-1}\to \p\Delta^k$ satisfy for each 
$j=0,\dots,k$ the relation
$$
   ev_f\circ (\id\times\delta_j)=\delta_j\circ ev_f:
   B\times\Delta^{k-1}\to X^{k+1}.
$$
Using this and the fact that $\delta_j:\Delta^{k-1}\to \p\Delta^k$
changes orientation by $(-1)^{j+1}$ we obtain
$$
   \int_{B\times \Delta^{k-1}}ev_f^*\delta_j^*\om 
   = \int_{B\times \Delta^{k-1}}(\id\times\delta_j)^*ev_f^*\om 
   = (-1)^{j+1}\int_{B\times \delta_j(\Delta^{k-1})}ev_f^*\om. 
$$
Now recall that $b\om=\sum_{j=0}^k(-1)^{|\om|+j+1}\delta_j^*\om$ and
$\p\Delta^k=\bigcup_{j=0}^k\delta_j(\Delta^{k-1})$. Multiplying both
sides of the last displayed equation with $(-1)^{j+|\om|+1}$ and summing
over $j=0,\dots,k$ thus yields equation~\eqref{eq:b}. 

Next we show compatibility with the BV operators
\begin{equation}\label{eq:commBP}
   (I\circ B)(\om)(f) = ({-}P\circ I)(\om)(f)
\end{equation}
%\marginpar{$k$ changed to $k+1$ from here...May 2020 EV}
for $\om\in\Om^*(X^{k+1})$ and $f:B\to LX$.  
We first discuss the left hand side of \eqref{eq:commBP}. 
Recall that $B\om=(1-t)sN\om=(-1)^{|\om|}(1-t)\sigma_0^*N\om$, where
$\sigma_0:X^{k+2}\to X^{k+1}$ is the projection forgetting the zero-th factor 
and $t=(-1)^{k+1}\tau_{k+1}^*$ with $\tau_{k+1}:X^{k+2}\to X^{k+2}$ given by
$\tau_{k+1}(x_0,\dots,x_{k+1})=(x_1,\dots,x_{k+1},x_0)$. It follows that the form
$tsN\om\in\Om^*(X^{k+2})$ is independent of the variable $x_1$, so the
contraction of its pullback $ev_f^*tsN\om$ with the coordinate vector
field $\p_{t_1}$ on $\Delta^{k+1}$ is zero, and therefore the integrand
$ev_f^*tsN\om$ vanishes pointwise. This shows that $I(tsN\om)=0$, and thus
\begin{equation}\label{eq:lhs}
   (I\circ B)(\om)(f) = \la (1-t)sN\om,f\ra 
%   = (-1)^{|\om|}\int_{B\times\Delta^{k}}ev_f^*\sigma_0^*N\om
   = (-1)^{|\om|}\int_{B\times\Delta^{k+1}}\wh{ev_f}^*N\om
\end{equation}
with $\wh{ev_f}:=\sigma_0\circ ev_f:B\times \Delta^{k+1}\longrightarrow
X^{k+1}$. By definition of $ev_f$ and $\sigma_0$ we have
\begin{equation}\label{ev-hat}
   \widehat{ev_f}(p,t_1,...,t_{k+1}) = (f(p)(t_1),...,f(p)(t_{k+1})). 
\end{equation}
%\marginpar{...to here. Minor adjustments below. May 2020 EV}
Let us now rewrite the right hand side of \eqref{eq:commBP}. Recall
that the pushforward $\phi_*(F_{S^1}\times f)$ is given by the map
$\Delta^1\times B\rightarrow LX$ sending $(s,p)$ to the loop
$f(p)(\cdot+s)$. Switching the order of $\Delta^1$ and $B$, let us
define the map $S^1f:B\times S^1\rightarrow LX$ by
$S^1f(p,s)(t):=f(p)(t+s)$. Since switching the order changes
orientation by $(-1)^{\dim B}$, we obtain 
\begin{equation}\label{eq:rhs}
   (P\circ I)(\om)(f) 
   = \la\om,\phi_*(F_{S^1}\times f)\ra 
   = (-1)^{\dim B}\int_{B\times (\Delta^1\times\Delta^k)}ev_{S^1f}^*\om.
\end{equation}
To proceed further we need to relate the two maps $ev_{S^1f}$ and
$\widehat{ev_f}$ and their domains of definition. For this we split
$\Delta^1\times\Delta^k$ into the $k+1$ regions
$$
   R_i := \{(s,t_1,\dots,t_k)\mid t_{i-1}\leq 1-s \leq t_i\},\qquad i=1,\dots,k+1, 
$$
where we stipulate that $t_0=0$ and $t_{k+1}=1$.
Then $\bigcup_{i=0}^kR_i=\Delta^1\times\Delta^k$ with $R_i\cap R_j$ of
measure zero for $i\neq j$. In $R_i$, the condition $t_{i-1}+s\leq
1\leq t_i+s$ implies the ordering
$$
   0\leq t_i+s-1\leq\cdots\leq t_k+s-1\leq s\leq t_1+s\leq\cdots\leq
   t_{i-1}+s\leq 1,
$$ 
so $R_i$ is naturally diffeomorphic to $\Delta^{k+1}$ via the map
$r_i:R_i\longrightarrow \Delta^{k+1}$, 
$$
   r_i(s,t_1,...,t_k) := (t_i+s-1,\dots,t_k+s-1,s,t_1+s,\dots,t_{i-1}+s).
$$
Note that $r_i$ is orientation preserving if and only if $ik$ is even.
Unwrapping the definition of the various maps we find that they
satisfy the relation
$$
   ev_{S^1f}=\tau_k^{-i}\circ \widehat{ev_f}\circ (\id\times r_i),
$$
with the permutation map $\tau_k$ from above. Using $t_k=\tau_k^*$ and
and invariance of integration with respect to $\id\times r_i$, we deduce
\begin{align*}
   (-1)^{\dim B}\int_{B\times R_i}ev_{S^1f}^*\om
   &= (-1)^{\dim B}\int_{B\times R_i}(\id\times r_i)^*\widehat{ev_f}^*t_k^{-i}\om \cr
   &= (-1)^{\dim B+ik}\int_{B\times \Delta^{k+1}}\widehat{ev_f}^*(t_k^{-i}\om).
\end{align*}
Since $t=(-1)^kt_k$ satisfies $t^{k+1}=1$, we can write
$$
N=\sum_{j=0}^kt^j = \sum_{i=1}^kt^{k+1-i} = \sum_{i=1}^{k+1}t^{-i} = \sum_{i=1}^{k+1}(-1)^{ik}t_k^{-i}.
$$ 
Summing up the
previous displayed equation over $i=1,\dots,k+1$ and using
equation~\eqref{eq:rhs} we therefore obtain 
$$
   (P\circ I)(\om)(f) = (-1)^{\dim B}\int_{B\times \Delta^{k+1}}\widehat{ev_f}^*(N\om). 
$$
Now observe that $\dim B+k+1=\deg\om$, and thus $\dim B=|\om|-1$ since
$\om\in\Om^*(X^{k+1})$. Comparing with equation~\eqref{eq:lhs}, this finishes the
proof of equation~\eqref{eq:commBP} and thus of Proposition~\ref{prop:chen}.
%\marginpar{We are now off by a sign!. May 2020 EV}
\end{proof}

%%%
\subsection{Chen's iterated integral on Connes' version}
%%%

On Connes' version of cyclic homology, Chen's iterated integral is
induced by the {\em cyclic Chen pairing}
\begin{equation}\label{eq:cycchenpair}
   \la\cdot,\cdot\ra_\cyc:\Om^{*+k}(X^{k})\times C_*(LX)\longrightarrow\R.
\end{equation}
It is defined on $\om\in \Om^{*+k}(X^{k})$ and $f:B\to LX$ by
$$
   \la\om,f\ra_\cyc :=(-1)^{|\om|}\int_{B\times\Delta^{k}_{\rm cyc}}\wh{ev_f}^*\om,
$$
where $\Delta^{k}_{\rm cyc}$ denotes the space of tuples
$(t_1,\dots,t_k)\in (S^1)^k$ in the cyclic order $t_1\leq
t_2\leq\cdots\leq t_k\leq t_1$ and $\wh{ev_f}$ is defined
in~\eqref{ev-hat}. 
The {\em Connes (or cyclic) version of Chen's iterated integral} is
the degree preserving map
$$
   I^\lambda:\bigoplus_{n\geq 0}\Om^{*+n}(X^{n})\longrightarrow C^*(LX), \qquad
   I^\lambda(\om)(f) := \la\om,f\ra_\cyc.
$$

\begin{lemma}\label{lem:Ilambda}
%The iterated integral $I_\lambda$ has the following properties:
(a) $I^\lambda$ is given by the composition
$$
   I^\lambda:\bigoplus_{n\geq 0}\Om^{*+n}(X^{n})\stackrel{B}\longrightarrow
   \bigoplus_{n\geq 0}\Om^{*+n}(X^{n+1})\stackrel{I}\longrightarrow C^*(LX).
$$
In particular, $I^\lambda=I\circ B: \bigl(C_*(\Om(X)),d+b\bigr)\longrightarrow
\bigl(C^*(LX),d\bigr)$ is a chain map. 

(b) $I^\lambda$ vanishes on $\im(1-t)$ and thus descends to
$C_*^\lambda(\Om(X))=C_{*+1}(\Om(X))/\im (1-t)$. 

(c) The following diagram commutes, where $\pi_0$ denotes the quotient
   projection of the constant term in $u^{-1}$:
\begin{equation*}
\begin{CD}
   C_{*+1}(\Om(X))[u^{-1}] @>{\pi_0}>> C_*^\lambda(\Om(X)) @>{B}>> C_*(\Om(X))[[u]]\\
   @V{I}VV @VV{I^\lambda}V @VV{I}V \\
   C^{*+1}(LX)[u^{-1}] @>{{-}P\pi_0}>> C^*(LX) @>{\iota_0}>> C^*(LX)[[u]].
\end{CD}
\end{equation*}
(d) For smooth maps $f:B\to LX$ and $\sigma:B\to S^1$ define
$f_\sigma:B\to LX$ by $f_\sigma(p)(t):=f(p)(t+\sigma(p))$. Then for
each $\om\in\Om^*(X^k)$ we have
$$
   \la I^\lambda\om,f_\sigma\ra = \la I^\lambda\om,f\ra. 
$$
\end{lemma}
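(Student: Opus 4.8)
The plan is to establish (a) first and then deduce (b), (c), (d) from it together with the identity $P\circ I=I\circ B$ already proved in Proposition~\ref{prop:chen}. For (a) I would expand $B=(1-t)sN$ and use the observation from the proof of Proposition~\ref{prop:chen} that $I(tsN\om)=0$, since $tsN\om$ does not depend on the first $X$-factor and so its $ev_f$-pullback is annihilated by contraction with $\p_{t_1}$. This reduces $I\circ B$ to $I\circ sN$, and unwinding $s=(-1)^{|\cdot|}\sigma_0^*$ yields
$$
   (I\circ B)(\om)(f)=(-1)^{|\om|}\int_{B\times\Delta^k}\wh{ev_f}^*(N\om),
$$
with $\wh{ev_f}$ as in~\eqref{ev-hat}. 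It then remains to identify this with $\la\om,f\ra_\cyc=(-1)^{|\om|}\int_{B\times\Delta^k_\cyc}\wh{ev_f}^*\om$. The geometric heart of the matter is that the cyclic domain $\Delta^k_\cyc\subset(S^1)^k$ decomposes, up to sets of measure zero, into $k$ fundamental domains $D_1,\dots,D_k$, where $D_i$ is the locus in which $t_i$ is the first of the $t_j$ met when one runs counterclockwise from the basepoint; each $D_i$ is carried onto the ordered simplex $\Delta^k$ by the cyclic relabelling of coordinates, which on forms is a power of $\tau^*$. Keeping track of the orientation sign built into $t=(-1)^{k-1}\tau^*$, the sum over $i$ converts $\int_{\Delta^k_\cyc}\wh{ev_f}^*\om$ into $\sum_{i=0}^{k-1}\int_{\Delta^k}\wh{ev_f}^*(t^i\om)=\int_{\Delta^k}\wh{ev_f}^*(N\om)$, which is the displayed expression. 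The chain-map statement is then formal: $B$ is a degree $-1$ chain map of $(C,d_H)$ (this is exactly the mixed-complex relation $d_H B+B d_H=0$) and $I$ is a chain map by Proposition~\ref{prop:chen}, so the composite $I_\lambda=I\circ B$ is a chain map.

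Parts (b) and (c) are then quick. For (b), $I_\lambda\bigl((1-t)c\bigr)=I\bigl((1-t)sN(1-t)c\bigr)=0$ because $N(1-t)=1-t^{k}=0$ on $\Om^*(X^k)$ by telescoping, so $I_\lambda$ descends to $C^*_\lambda(\Om(X))$. For (c) I would chase the two squares using $\R[u^{-1}]$- resp.\ $\R[[u]]$-linearity of $I$: on an element $\sum_j c_j u^{-j}$ the left square reduces to $P\,I(c_0)=I(Bc_0)=I_\lambda(c_0)$, which is precisely $P\circ I=I\circ B$ combined with (a), while the right square is the definition of $\iota_0$ as inclusion of the constant $u$-term together with (a) once more (note $B$ descends to $C^*_\lambda$ by the same computation as in (b)).

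For (d) the decisive feature is that $\Delta^k_\cyc$, unlike $\Delta^k$, is invariant under simultaneous rotation of all coordinates. I would introduce the fibrewise shift
$$
   \Phi:B\times(S^1)^k\to B\times(S^1)^k,\qquad \Phi(p,t_1,\dots,t_k):=\bigl(p,t_1+\sigma(p),\dots,t_k+\sigma(p)\bigr),
$$
a smooth diffeomorphism satisfying $\wh{ev_{f_\sigma}}=\wh{ev_f}\circ\Phi$. Since adding the same element $\sigma(p)\in S^1$ to all $t_j$ preserves the cyclic order, $\Phi$ maps $B\times\Delta^k_\cyc$ onto itself; and in local coordinates its Jacobian is block lower triangular with identity diagonal blocks, the only off-diagonal entries being the $\p\sigma/\p p^j$, so $\det D\Phi=1$ and $\Phi$ is orientation preserving. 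Invariance of the integral under $\Phi$ then gives
$$
   \la I_\lambda\om,f_\sigma\ra=(-1)^{|\om|}\int_{B\times\Delta^k_\cyc}\Phi^*\wh{ev_f}^*\om=(-1)^{|\om|}\int_{B\times\Delta^k_\cyc}\wh{ev_f}^*\om=\la I_\lambda\om,f\ra.
$$

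The step I expect to be the real obstacle is the orientation and sign bookkeeping in (a): checking that the $k$ rotated copies of $\Delta^k$ assemble with exactly the signs encoded in $t=(-1)^{k-1}\tau^*$ so that they reproduce $N=\sum_{i=0}^{k-1}t^i$ rather than some other signed sum. Once this is pinned down, (b)--(d) are routine, with (d) needing only the elementary Jacobian computation above.
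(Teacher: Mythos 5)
Your proposal is correct and follows essentially the same route as the paper: part (a) via the decomposition of $\Delta^k_\cyc$ into $k$ cyclically permuted copies of the standard simplex combined with the identity $(I\circ B)(\om)(f)=(-1)^{|\om|}\int\wh{ev_f}^*(N\om)$ from the proof of Proposition~\ref{prop:chen}, part (b) from $N(1-t)=0$, part (c) from $P\circ I=I\circ B$ and (a), and part (d) from the orientation-preserving rotation diffeomorphism of $B\times\Delta^k_\cyc$. The only difference is cosmetic: you run the computation in (a) from $I\circ B$ toward the cyclic integral, whereas the paper starts from $\la\om,f\ra_\cyc$ and ends at $\la B\om,f\ra$.
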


\begin{proof}
For part (a) we decompose
$$
   \Delta^n_\cyc = \bigcup_{i=1}^n\Delta^n_i,\qquad
   \Delta^n_i:=\{(t_1,\dots,t_n)\mid t_i\leq 0\leq t_{i+1}\}
$$
where each $\Delta^n_i$ is isomorphic to the standard simplex via the
permutation map $\tau_{n-1}^i:\Delta^n\stackrel{\cong}\to\Delta^n_i$.
Now for $\om\in\Om^{*+n}(X^n)$ and $f:B\to LX$ we compute
\begin{align*}
   \la\om,f\ra_\cyc
   &= (-1)^{|\om|}\int_{B\times\Delta^n_{\rm cyc}}\wh{ev_f}^*\om 
   = (-1)^{|\om|}\sum_{i=1}^n(-1)^{(n-1)i}\int_{B\times\Delta^n}\wh{ev_f}^*(\tau_{n-1}^i)^*\om \cr
   &= (-1)^{|\om|}\int_{B\times\Delta^n}\wh{ev_f}^*(\sum_{i=1}^nt^i\om) 
   = (-1)^{|\om|}\int_{B\times\Delta^n}\wh{ev_f}^*(N\om) 
   = \la B\om,f\ra,
\end{align*}
where the last equality follows from~\eqref{eq:lhs}. This proves  
$I^\lambda=I\circ B$, and this is a chain map because $I$ (by
Proposition~\ref{prop:chen}) and $B$ are chain maps. 

Part (b) follows from $B(1-t)=0$, which holds because $B=(1-t)sN$ and
$N(1-t)=0$. In part (c) commutativity of the first square follows by
applying $\pi_0$ to $I^\lambda=I\circ B={-}P\circ I$ (which holds by
Proposition~\ref{prop:chen}), and commutativity of the second square
follows from part (a). For part (d), note that $ev_{f_\sigma} =
ev_f\circ\rho$ with the orientation preserving diffeomorphism
\begin{align*}
   \rho: B\times \Delta^k_\cyc &\to B\times \Delta^k_\cyc, \qquad
   (p,t_1,\dots,t_k) &\mapsto
   \Bigl(p,t_1+\sigma(p),\dots,t_k+\sigma(p)\Bigr), 
\end{align*}
so part (d) follows from invariance of integration under diffeomorphisms.
\end{proof}

Consider the following diagram: 
\begin{equation*}
\xymatrix{
& LX\times ES^1 \ar[d]^{\pi} \ar[r]^-{pr_1} & LX \\
B \ar[ur]^{\wt g} \ar[r]^-{g} & LX\times_{S^1}ES^1 \\
}
\end{equation*}
Using this, we define 
$$
   \la \ol I^\lambda\om,g\ra := \la I^\lambda\om,pr_1\circ\wt g\ra
$$
for $\om\in\Om^*(X^k)$ and $\wt g:B\to LX\times ES^1$ any lift of
$g:B\to LX\times_{S^1}ES^1$, which exists for each simplex $B$ because
$\pi:LX\times ES^1\to LX\times_{S^1}ES^1$ is a circle bundle. Any
other lift of $g$ is of the form $\wt g_\sigma(p)=\sigma(p)\cdot\wt
g(p)$ for a smooth map $\sigma:B\to S^1$ (where $\cdot$ denotes the
circle action on $LX\times ES^1$), and applying 
Lemma~\ref{lem:Ilambda} to $pr_1\circ\wt g_\sigma=(pr_1\circ\wt
g)_\sigma$ we see that the definition does not depend on the lift $\wt
g$ and defines a chain map
$$
   \ol I^\lambda: \bigl(C_*^\lambda(\Om(X)),d+b\bigr)\longrightarrow
   \bigl(C^*(LX\times_{S^1}ES^1),d\bigr).
$$
Note that the homology of the right hand side is the 
equivariant cohomology $H^*_{S^1}(LX)=H^{*}(LX\times_{S^1}ES^1,d)$ of
$LX$ defined via the Borel construction. 
Passing to homology in Lemma~\ref{lem:Ilambda}(c) we thus obtain
the following commuting diagram, where the maps $\pi_{0*}$ and
$\iota_{0*}$ are isomorphisms by Lemma~\ref{lem:cyccomplex-computation} 
and Proposition~\ref{prop:Borel}, respectively:
\begin{equation}\label{eq:Ilambda}
\begin{CD}
   HC_{*+1}^{[u^{-1}]}(\Om(X)) @>{\pi_{0*}}>{\cong}> HC_*^\lambda(\Om(X)) @>{B_*}>> HC_*^{[[u]]}(\Om(X))\\
   @VV{I_*}V @VV{I^\lambda_*}V @VV{I_*}V \\
   H^{*+1}_{[u^{-1}]}(LX) @>{{-}P_{0*}}>> H^*_{S^1}(LX) @>{\iota_{0*}}>{\cong}> H^*_{[[u]]}(LX).
\end{CD}
\end{equation}

%%%
\subsection{Chen's iterated integral for simply connected manifolds}
%%%

In this subsection we assume in addition that $X$ is simply connected. 
This has two important consequences. First, by Goodwillie's
Theorem~\ref{thm:Goodwillie} the projection $LX\to\pt$ to a
point induces an isomorphism $H^*_{[[u,u^{-1}]}(LX) \cong \R[u,u^{-1}]$.
For the second consequence, recall that composition of the map $I$ from
Proposition~\ref{prop:chen} with the morphism $\phi$ from
Corollary~\ref{cor:alg-ana} yields a morphism of mixed complexes 
(still denoted $I$) 
$
   I:\bigl(\bigoplus_{n\geq 0}\Om^*(X)^{\otimes n+1},d_H,B\bigr)\to (C^*(LX),d,{-}P).
$
The following theorem is proved in~\cite{Getzler-Jones-Petrack} and
attributed to Chen, see also Jones~\cite{Jones}. 

\begin{theorem}[Getzer, Jones and Petrack~\cite{Getzler-Jones-Petrack}]\label{thm:GJP}
Let $X$ be a simply connected manifold. Then Chen's iterated integral
$$
   I:\Bigl(C_*(\Om(X)),d_H,B\Bigr)\longrightarrow (C^*(LX),d,{-}P)
$$
induces an isomorphism on Hochschild homology, hence defines a
quasi-isomorphism of mixed complexes. 
\end{theorem}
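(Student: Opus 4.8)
The plan is to prove the sharper statement that $I$ induces an isomorphism on Hochschild homology $H\bigl(C(\Om(X)),d+b\bigr)\xrightarrow{\cong}H^*(LX)$; since $I$ is already a morphism of mixed complexes by Proposition~\ref{prop:chen}, and since Hochschild homology is by definition the $d_H$-homology of a mixed complex, this isomorphism is exactly the assertion that $I$ is a quasi-isomorphism of mixed complexes. The theorem's $I$ is the algebraic iterated integral $I\circ\phi$, so by Corollary~\ref{cor:alg-ana} (where $\phi$ is a quasi-isomorphism) it suffices to treat the analytic model $\bigl(\bigoplus_n\Om^*(X^{n+1}),d+b\bigr)$, on which $I$ is directly defined.

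The core of the argument is a comparison of two spectral sequences through the filtration-preserving map $I$. On the algebraic side I would filter by word length $n$ (the number of tensor factors): the internal de Rham differential $d$ preserves $n$ while $b$ lowers it by one, so the associated graded differential is $d$, and by the K\"unneth formula the $E_1$-page is $\bigoplus_n H^*(X)^{\otimes(n+1)}$ with the bar differential induced by $b$, whence $E_2=HH_*\bigl(H^*(X)\bigr)$, the Hochschild homology of the cohomology algebra. On the loop space side I would use the spectral sequence of the evaluation fibration $\Om X\to LX\xrightarrow{\mathrm{ev}_0}X$, whose $E_2$-page is $H^*(X)\otimes H^*(\Om X)$ because $X$ is simply connected and we work over $\R$. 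The geometric point, to be verified from the definition of the evaluation maps in Section~\ref{sec:chen}, is that Chen's integral of a form with $n$ tensor factors uses exactly $n$ integration variables $t_1\le\cdots\le t_n$, with the $a_0$-slot recording evaluation at $t_0=0$; this intertwines the word-length filtration with the fibration filtration, so $I$ induces a map of spectral sequences.

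It then remains to show that $I$ induces an isomorphism on $E_2$. The $H^*(X)$ ``base'' factor, coming from the $a_0$-slot, is matched tautologically via $\mathrm{ev}_0$, and the problem reduces to the \emph{based} loop space: one must check that the induced map on the remaining bar factors realizes the classical isomorphism ${\rm Tor}_{H^*(X)}(\R,\R)\cong H^*(\Om X)$. This is precisely Chen's original theorem, equivalently the Adams--Eilenberg--Moore cobar description of $H^*(\Om X)$ for simply connected $X$, which I would either cite or recover by identifying the $E_1$ bar differential with the cobar construction. I expect this $E_2$-identification to be the main obstacle, together with convergence. Convergence is where simple connectivity enters decisively: $H^1(X)=0$ forces every reduced bar factor into cohomological degree $\ge 2$, so in each total degree only bounded word lengths contribute; both spectral sequences are then bounded in each degree and converge, and the Zeeman/Eilenberg--Moore comparison theorem applies. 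Once the map on $E_2$ is known to be an isomorphism, this comparison theorem yields that $I$ is a quasi-isomorphism, completing the proof.
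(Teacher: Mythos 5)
First, a point of reference: the paper does not actually prove Theorem~\ref{thm:GJP} --- it states it with the attribution ``proved in \cite{Getzler-Jones-Petrack} \dots see also Jones \cite{Jones}'' and uses it as a black box. So there is no in-paper proof to compare against line by line. Your overall strategy (reduce to the analytic model via Corollary~\ref{cor:alg-ana}, filter the Hochschild complex by word length to get $E_2=HH_*(H^*(X))$, compare spectral sequences through $I$, and use simple connectivity for convergence) is in fact the standard route taken in the cited references, and your framing of the first step --- that a quasi-isomorphism of mixed complexes is exactly an isomorphism on $d_H$-homology --- is correct.

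However, there is a genuine gap in the middle of your argument: you pair the word-length filtration with the \emph{Serre} spectral sequence of the evaluation fibration $\Om X\to LX\xrightarrow{\mathrm{ev}_0}X$, and these two filtrations do not correspond under $I$. The Serre filtration is governed by the skeleta of the base $X$ (essentially by the degree of the form in the $a_0$-slot), whereas the word-length filtration is governed by the number $n$ of tensor factors; these are transverse directions, so $I$ is not filtration-preserving in the sense you need, and the two $E_2$-pages do not match: $HH_*(H^*(X))$ is not $H^*(X)\otimes H^*(\Om X)=H^*(X)\otimes\mathrm{Tor}_{H^*(X)}(\R,\R)$ in general (they are $E_2$-terms of \emph{different} spectral sequences converging to $H^*(LX)$). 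The correct topological partner of the word-length filtration is the Tot-tower (Bousfield--Kan / Eilenberg--Moore) spectral sequence of the cocyclic space $n\mapsto X^{n+1}$ of Example~\ref{ex:top}, whose totalization is $LX$; there both $E_2$-pages are $HH_*(H^*(X))$ and $I$ induces the canonical identification on $E_1$ via the K\"unneth isomorphism, so no separate ``reduction to the based loop space'' is needed. Two further points require care even after this repair: (i) to get boundedness of the word length in each total degree from $H^0(X)=\R$, $H^1(X)=0$ you must first pass to the normalized (degenerate-word-free) complex, since the unnormalized $E_1$-page still contains arbitrarily long words involving units; and (ii) convergence of the Eilenberg--Moore-type spectral sequence for simply connected $X$ is exactly the nontrivial analytic input, and is where the hypothesis is really used --- it should be cited rather than waved at alongside the Zeeman comparison theorem.
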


\begin{corollary}\label{cor:GJP}
In the situation of Theorem~\ref{thm:GJP}, Chen's iterated integral
$I$ induces isomorphisms on the classical $[[u]]$, $[[u,u^{-1}]$ and
$[u^{-1}]$ versions of cyclic homology. 
On the other five versions it does not induce an isomorphism in general.
\end{corollary}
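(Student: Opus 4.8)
The first assertion is immediate, and the second will come from explicit counterexamples. For the first: by Theorem~\ref{thm:GJP} the map $I$ is a quasi-isomorphism of mixed complexes, and by Proposition~\ref{prop:quism-invariance} the three classical versions $HC^*_{[[u]]}$, $HC^*_{[[u,u^{-1}]}$ and $HC^*_{[u^{-1}]}$ are quasi-isomorphism invariants; hence $I_*$ is an isomorphism on each of them. The work is therefore entirely in the negative statement, and I would split the five non-classical versions into those already detected by a point and the two that are not.

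For $HC^*_{[u]}$, $HC^*_{[u,u^{-1}]}$ and $HC^*_{[u,u^{-1}]]}$ I would take $X=\pt$, which is simply connected. Then $\Om(X)=\R$ and $LX=\pt$, so the domain and codomain of $I$ are precisely the mixed complexes of Examples~\ref{ex:point-deRham} and~\ref{ex:point-sing}, whose cyclic homologies are listed there. Comparing the two lists, these three groups are already abstractly non-isomorphic: $HC^*_{[u,u^{-1}]}$ and $HC^*_{[u,u^{-1}]]}$ vanish for $\Om(X)$ but equal $\R[u,u^{-1}]$ for $LX$, while $HC^*_{[u]}$ is $\R[u^{-1}][-1]$ on the de Rham side and $\R[u]$ on the loop space side. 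Since the groups are not isomorphic, no map, and in particular $I_*$, can be an isomorphism on them.

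The two remaining versions $HC^*_{[[u,u^{-1}]]}$ and $HC^*_{[[u^{-1}]]}$ agree on the two sides for a point, so here I would take a nontrivial simply connected manifold, e.g.\ $X=S^2$. Since $\bigoplus_n\Om^*(X^{n+1})$ is a cyclic cochain complex, Corollary~\ref{cor:cyccochain} gives $HC^*_{[[u,u^{-1}]]}(\Om X)\cong HC^*_{[[u,u^{-1}]}(\Om X)$, and by the first assertion together with Goodwillie's Theorem~\ref{thm:Goodwillie} this equals $H^*_{[[u,u^{-1}]}(LX)=\R[u,u^{-1}]$. On the loop space side, however, fixed point localization (Theorem~\ref{thm:JP}) yields $H^*_{[[u,u^{-1}]]}(LX)\cong H^*(X)\otimes\R[u,u^{-1}]$, which for $X=S^2$ is strictly larger than $\R[u,u^{-1}]$; thus $I_*$ is not an isomorphism on $HC^*_{[[u,u^{-1}]]}$. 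To pass to $HC^*_{[[u^{-1}]]}$ I would apply $I$ as a morphism of the tautological short exact sequence $0\to uC[[u]]\to C[[u,u^{-1}]]\to C[[u^{-1}]]\to 0$ on both sides. Because $I_*$ is an isomorphism on $HC^*_{[[u]]}$ and $uC[[u]]$ is isomorphic to $C[[u]]$ up to a degree shift, the induced map on the subcomplex is a quasi-isomorphism, so the five lemma applied to the two induced long exact sequences shows that $I_*$ is an isomorphism on $HC^*_{[[u,u^{-1}]]}$ if and only if it is one on $HC^*_{[[u^{-1}]]}$. As it fails on the former for $X=S^2$, it fails on the latter as well.

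I expect the delicate part to be exactly these last two versions. Unlike the first three, they are invisible to the point example, and establishing their failure requires combining the collapse forced on the de Rham side by Corollary~\ref{cor:cyccochain} and Goodwillie's theorem with the genuinely larger answer produced by fixed point localization on the loop space side, and then transporting the failure from the periodic version to the completed negative version through the tautological exact sequence rather than by a direct computation of $H^*_{[[u^{-1}]]}(LX)$.
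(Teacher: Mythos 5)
Your proof is correct and follows essentially the same route as the paper: quasi-isomorphism invariance for the three classical versions, the point example for the $[u]$, $[u,u^{-1}]$ and $[u,u^{-1}]]$ versions, and the comparison of Goodwillie's theorem (via Corollary~\ref{cor:cyccochain}) with Jones--Petrack localization, transported from $[[u,u^{-1}]]$ to $[[u^{-1}]]$ by the tautological sequence and the five-lemma. The only cosmetic difference is that you specialize to $X=S^2$ where the paper argues for arbitrary noncontractible simply connected $X$.
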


\begin{proof}
The first assertion follows directly from Theorem~\ref{thm:GJP}
combined with Proposition~\ref{prop:quism-invariance}.
For the second assertion, comparison of Examples~\ref{ex:point-deRham}
and~\ref{ex:point-sing} shows that $I$ does not induce isomorphisms
on the $[u]$, $[u,u^{-1}]$ and $[u,u^{-1}]]$ versions for $X=\{\pt\}$.
The remaining two versions fit into the commuting diagram of
tautological sequences
$$
\xymatrix{
  \cdots HC_*^{[[u]]}(\Om(X)) \ar[d]^{I_*}_\cong \ar[r]
   & HC_*^{[[u,u^{-1}]]}(\Om(X)) \ar[d]^{I_*} \ar[r] 
   & HC_*^{[[u^{-1}]]}(\Om(X))\cdots \ar[d]^{I_*} \\
   \cdots H^*_{[[u]]}(LX) \ar[r] & H^*_{[[u,u^{-1}]]}(LX) \ar[r]
   & H^*_{[[u^{-1}]]}(LX)\cdots \\
}
$$
Now $HC_*^{[[u,u^{-1}]]}(\Om(X))\cong HC_*^{[[u,u^{-1}]}(\Om(X))\cong
H^*_{[[u,u^{-1}]}(LX) \cong \R[u,u^{-1}]$ where the first
isomorphism follows from Corollary~\ref{cor:cyccochain}, the second
one is given by $I_*$, and the last one follows from Goodwillie's
theorem~\ref{thm:Goodwillie}. On the other hand, by Theorem~\ref{thm:JP}
we have $H^*_{[[u,u^{-1}]]}(LX)\cong H^*(X)\otimes\R[u,u^{-1}]$. For
$X$ noncontractible these two $\R[u]$-modules differ, hence the middle
map $I_*$ in the diagram is not an isomorphism. It follows from the
five-lemma that the right map $I_*$ is not an isomorphism either. 
\end{proof}

The following theorem, which is the main result of this paper,
describes the behaviour of Connes' version under Chen's iterated integral.

\begin{theorem}\label{thm:main-homol}
Let $X$ be a simply connected manifold. Then the kernel and image of
the iterated integral $I^{\lambda}_*:HC_*^\lambda(\Om(X)) \to
H^{*}_{S^1}(LX)$ are given by
%(where $s$ denotes upward grading shift by $1$)
\begin{align*}
   \ker I^{\lambda}_* &= {\rm span\,}\{[1],[1^3],[1^5],\dots\}\cong \R[u^{-1}], \cr 
   \im I^{\lambda}_* &= \ker\Bigl(\iota^*:H^*_{S^1}(LX)\to\R[u]\Bigr) \cong
   H^*_{S^1}(LX)\Bigl/\R[u]. 
\end{align*}
\end{theorem}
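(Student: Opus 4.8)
The plan is to run two diagram chases through the commuting square~\eqref{eq:Ilambda}, with Goodwillie's Theorem~\ref{thm:Goodwillie} as the only input special to simply connected $X$. First I record which maps in~\eqref{eq:Ilambda} are isomorphisms: the horizontal maps $\pi_{0*}$ and $\iota_{0*}$ are isomorphisms by Lemma~\ref{lem:cyccomplex-computation} and Proposition~\ref{prop:Borel}, while the two outer vertical maps $I_*$ (on the classical $[u^{-1}]$ and $[[u]]$ versions) are isomorphisms by Corollary~\ref{cor:GJP}. Commutativity of the left square, together with $\pi_{0*}$ and the left $I_*$ being isomorphisms, gives $\im I_{\lambda*}=\im P_{0*}$ and $\ker I_{\lambda*}=\pi_{0*}\bigl((I_*)^{-1}(\ker P_{0*})\bigr)$, so everything reduces to the image and kernel of $P_{0*}$, i.e.\ to the tautological (Gysin) exact sequence of $LX$ from Proposition~\ref{prop:gysin} and~\eqref{eq:proj-point}.

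For the image, transporting $\im P_{0*}$ through the isomorphism $\iota_{0*}$ identifies it with the image of the tautological map $P_0\colon H^{*+1}_{[u^{-1}]}(LX)\to H^*_{[[u]]}(LX)$, which by exactness equals $\ker\bigl(\cdot u\colon H^*_{[[u]]}(LX)\to H^{*+2}_{[[u,u^{-1}]}(LX)\bigr)$. Here simple connectivity enters: by Theorem~\ref{thm:Goodwillie} the map $\pi^*\colon\R[u,u^{-1}]\to H^*_{[[u,u^{-1}]}(LX)$ is an isomorphism, and since $\iota^*\pi^*=\id$ its one-sided inverse $\iota^*$ is an isomorphism too. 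Naturality of $\iota^*$ with respect to the $S^1$-equivariant inclusion $\iota\colon\pt\into LX$ gives a commuting square intertwining the two multiplication-by-$u$ maps; since multiplication by $u$ is injective on $\R[u,u^{-1}]$ while $\iota^*$ is an isomorphism on the $[[u,u^{-1}]$ version, a short chase yields $\ker(\cdot u)=\ker\bigl(\iota^*\colon H^*_{[[u]]}(LX)\to\R[u]\bigr)$. Pulling back through the natural isomorphism $\iota_{0*}$ (which over the point is the canonical identity of $\R[u]$) gives $\im I_{\lambda*}=\ker\bigl(\iota^*\colon H^*_{S^1}(LX)\to\R[u]\bigr)$, and the splitting $\iota^*\pi^*=\id$ exhibits this kernel as a complement to $\im\pi^*=\R[u]$, hence $\cong H^*_{S^1}(LX)/\R[u]$.

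For the kernel I first compute $\ker P_{0*}$: by exactness it equals $\im p_*$, and feeding the Goodwillie isomorphism into the ladder~\eqref{eq:proj-point} (whose bottom row is the split sequence of the point, where $p_*\colon\R[u,u^{-1}]\to\R[u^{-1}]$ is surjective) shows $\ker P_{0*}=\im\bigl(\pi^*\colon\R[u^{-1}]\to H^{*+1}_{[u^{-1}]}(LX)\bigr)$. Now I use functoriality of Chen's iterated integral with respect to the constant map $h\colon X\to\pt$, which induces the unit $\R\to\Om(X)$ on de Rham algebras and the projection $\pi$ on loop spaces: naturality of the $[u^{-1}]$ version of $I$, combined with the fact that $I_*^{\pt}$ is an isomorphism on the classical $[u^{-1}]$ version (Corollary~\ref{cor:GJP} for the point), identifies $(I_*)^{-1}(\ker P_{0*})$ with the image of the unit on $HC^{*+1}_{[u^{-1}]}$. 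Applying the natural isomorphism $\pi_{0*}$ then gives $\ker I_{\lambda*}=\im\bigl(\mathrm{unit}_*\colon HC^*_\lambda(\R)\to HC^*_\lambda(\Om(X))\bigr)$. By Example~\ref{ex:point-deRham} the source is $HC^*_\lambda(\R)\cong\R[u^{-1}]$ with basis $[1],[1^3],[1^5],\dots$, and the unit is injective because it is split by the augmentation $\Om(X)\to\R$; this is precisely $\ker I_{\lambda*}={\rm span\,}\{[1],[1^3],[1^5],\dots\}\cong\R[u^{-1}]$.

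The main obstacle is bookkeeping the functoriality and degree conventions rather than any single hard estimate: I must verify that Chen's iterated integral (in all its incarnations $I$, $I_\lambda$, and the equivariant $\ol I_\lambda$) is natural under smooth maps $X\to X'$, applied here to $X\to\pt$, and that $\pi_{0*}$ and $\iota_{0*}$ are natural in the cyclic cochain complex and in the $S^1$-space, so that every square above genuinely commutes. A useful consistency check, which independently gives the easy inclusion ${\rm span\,}\{[1],[1^3],\dots\}\subseteq\ker I_{\lambda*}$, is a degree count: $HC^*_\lambda(\R)$ is supported in odd negative degrees $-1,-3,-5,\dots$ while $H^*_{S^1}(\pt)=\R[u]$ is supported in even nonnegative degrees, so the degree-preserving map $I_{\lambda*}^{\pt}$ vanishes for dimensional reasons; together with the chase showing $\ker I_{\lambda*}$ is one-dimensional in exactly the degrees $-1,-3,-5,\dots$, this forces equality.
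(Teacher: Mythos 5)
Your proof is correct and follows essentially the same route as the paper: the same diagram assembled from~\eqref{eq:Ilambda}, the tautological exact sequences, the comparison~\eqref{eq:proj-point} with the point, and Goodwillie's Theorem~\ref{thm:Goodwillie}, reducing everything to $\ker P_{0*}$ and $\im P_{0*}$. The only (cosmetic) difference is that you identify $I_*^{-1}(\ker P_{0*})$ via naturality of $I$ under $X\to\pt$ together with split injectivity of the unit, whereas the paper writes down the explicit representatives $u^{-k}1+u^{-k+1}1^3+\cdots+1^{2k+1}$ from Example~\ref{ex:point-deRham}; both arguments rest on the same compatibility of Chen's integral with the projection to a point.
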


\begin{proof}
Consider the commuting diagram
\begin{equation*}
\xymatrix{
  & HC^\lambda_*(\Om(X)) \ar[r]^{I^{\lambda}_*} & H_{S^1}^{*}(LX) \ar[d]^{\iota_{0*}}_\cong \\
  H^{*+1}_{[[u,u^{-1}]}(LX) \ar[d]^\cong \ar[r]^{p_*} & H^{*+1}_{[u^{-1}]}(LX)
    \ar[u]^{\pi_0I_*^{-1}}_\cong \ar[r]^{{-}P_{0*}} & H^{*}_{[[u]]}(LX)
    \ar[d]^{\iota^*} \ar[r]^{\cdot u} & H^{*+2}_{[[u,u^{-1}]}(LX) \ar[d]^\cong \\ 
  \R[u,u^{-1}] \ar[r]^{p_*} & \R[u^{-1}] \ar[u]^{\pi^*} \ar[r]^0
       & \R[u] \ar[r]^{\cdot u} & \R[u,u^{-1}]\,.
} 
\end{equation*}
Here the lower two rows follow from~\eqref{eq:proj-point} for $Y=LX$
where $\pi^*$ is injective, $\iota^*$ is surjective, and the outer
vertical maps are isomorphisms by Goodwillie's Theorem~\ref{thm:Goodwillie}.
The upper square follows from diagram~\eqref{eq:Ilambda}, where $I_*$
is an isomorphism by Corollary~\ref{cor:GJP} and we have abbreviated
$\iota_0P_0$ as $P_0$. We read off
\begin{align*}
   \ker P_{0*} &= \pi^*\R[u^{-1}] = {\rm span\,}\{u^{-k}1\mid
   k\in\N_0\}, \cr 
   \im P_{0*} &= \ker\Bigl(\iota^*:H^{*-1}_{[[u]]}(LX)\to\R[u]\Bigr),
\end{align*}
%In view of the canonical isomorphism $H^{*-1}_{[[u]]}(LX)\cong
%H^{*-1}_{S^1}(LX)$ from Proposition~\ref{prop:Borel}
%and diagram~\eqref{eq:Ilambda}, in which the maps $I_*$ are
%isomorphisms by Corollary~\ref{cor:GJP},
and therefore
\begin{align*}
   \ker I^{\lambda}_* &= \pi_{0*}I_*^{-1}\ker P_{0*} \cr
   &= \pi_{0*}{\rm span\,}\{u^{-k}1+u^{-k+1}1^3+\cdots+1^{2k+1}\mid
     k\in\N_0\} \cr
   &= {\rm span\,}\{[1],[1^3],[1^5],\dots\}, \cr 
   \im I^{\lambda}_* &= \im P_{0*} 
   = \ker\Bigl(\iota^*:H^{*-1}_{S^1}(LX)\to\R[u]\Bigr). 
\end{align*}
Here the description of $I_*^{-1}\ker P_{0*}$ and its image under $\pi_{0*}$
follow from Example~\ref{ex:point-deRham}. 
\end{proof}

\begin{cor}\label{cor:main}
Let $X$ be a simply connected manifold. Denote by
$\ol{HC}^\lambda_*(\Om(X))$ the reduced Connes version of cyclic homology of the
de Rham complex of $X$, and by  
$H^*_{S^1}(LX,x_0)$ the $S^1$-equivariant cohomology of $LX$ relative
to a fixed constant loop $x_0$. Then Chen's iterated integral induces
an isomorphism
%(where $s$ denotes upward grading shift by $1$ and $u$
%is a formal variable of degree $2$)
\begin{align*}
%   H_\lambda^*(\Om(X))\Bigl/s\R[u^{-1}] \cong H^*_{S^1}(LX)\Bigl/\R[u]
%   = H^*_{S^1}(LX,x_0).
   I^{\lambda}_*: \ol{HC}^\lambda_*(\Om(X)) \stackrel{\cong}\longrightarrow H^*_{S^1}(LX,x_0)\,.
\end{align*}
\end{cor}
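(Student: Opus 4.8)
The plan is to deduce the isomorphism directly from Theorem~\ref{thm:main-homol} by identifying its kernel with the ``point part'' on the source side and its image with the relative group on the target side.

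First, on the source side I would produce a direct sum decomposition
$$
   HC_\lambda^*(\Om(X)) = HC_\lambda^*(\R) \oplus \ol{HC}_\lambda^*(\Om(X)),
$$
the first summand being the image of the map induced by the unit $\R\to\Om(X)$. Since $X$ is connected, its de Rham dga is augmented (with $\ol A=\Om^*(X,x_0)$), so this follows by combining the natural isomorphism $HC_\lambda^{*}\cong HC^{*+1}_{[[u^{-1}]]}$ (Corollary~\ref{cor:cyccochain}) with the splitting of the $[[u^{-1}]]$-version for augmented dgas in Corollary~\ref{cor:reduced-cyc-hom}, under which the unit summand $HC_\lambda^*(\R)\cong\R[u^{-1}]$ corresponds to $H\ol C^{*+1}_{[[u^{-1}]]}(\R)$ and the complement to $\ol{HC}_\lambda^*(\Om(X))$. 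Unwinding generators through these identifications via Example~\ref{ex:point-deRham}, the summand $HC_\lambda^*(\R)$ is spanned by the classes $[1],[1^3],[1^5],\dots$, which by Theorem~\ref{thm:main-homol} is precisely $\ker I_{\lambda*}$. Consequently $I_{\lambda*}$ restricts to an injection on the complementary summand $\ol{HC}_\lambda^*(\Om(X))$ whose image equals all of $\im I_{\lambda*}$.

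Second, on the target side I would identify $\im I_{\lambda*}=\ker\bigl(\iota^*:H^*_{S^1}(LX)\to\R[u]\bigr)$ with the relative equivariant cohomology $H^*_{S^1}(LX,x_0)$. The constant loop $x_0$ is a fixed point of the reparametrization action, so the inclusion $\{x_0\}\into LX$ is $S^1$-equivariant and admits the equivariant retraction $LX\to\{x_0\}$. Hence $\iota^*$ is a split surjection onto $H^*_{S^1}(x_0)=\R[u]$, the long exact sequence of the pair $(LX,x_0)$ breaks into split short exact sequences, and $H^*_{S^1}(LX,x_0)\cong\ker\iota^*$.

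Combining the two steps, $I_{\lambda*}$ descends to the quotient $HC_\lambda^*(\Om(X))/\ker I_{\lambda*}=\ol{HC}_\lambda^*(\Om(X))$ and carries it isomorphically onto $\im I_{\lambda*}=\ker\iota^*\cong H^*_{S^1}(LX,x_0)$, which is the assertion. The step requiring the most care is the first one: one must verify that the \emph{abstract} reduced summand $\ol{HC}_\lambda^*(\Om(X))$ really is complementary to the \emph{concretely computed} kernel $\mathrm{span}\{[1],[1^3],\dots\}$ of Theorem~\ref{thm:main-homol}, i.e.\ that the unit-induced summand $HC_\lambda^*(\R)$ matches this kernel under the chain of identifications $HC_\lambda\cong HC_{[[u^{-1}]]}$ and the augmented splitting. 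Once this matching is in place, the target identification is a routine consequence of the equivariant retraction onto the fixed loop.
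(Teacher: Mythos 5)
Your proof is correct and follows essentially the same route as the paper: the paper's one-line argument of ``passing to reduced homologies'' in the commuting diagram from Theorem~\ref{thm:main-homol} amounts precisely to your two identifications, namely that $\ker I_{\lambda*}={\rm span\,}\{[1],[1^3],\dots\}$ is the unit summand complementary to $\ol{HC}_\lambda^*(\Om(X))$ in the augmented splitting, and that $\ker\bigl(\iota^*:H^*_{S^1}(LX)\to\R[u]\bigr)\cong H^*_{S^1}(LX,x_0)$ via the split exact sequence of the pair. Your version just makes explicit the bookkeeping the paper leaves implicit; no new ideas are involved.
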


\begin{proof}
Passing to reduced homologies, the commuting diagram in the proof of
Theorem~\ref{thm:main-homol} simplifies to
\begin{equation*}
\xymatrix{
  & \ol{HC}^\lambda_*(\Om(X)) \ar[r]^{I^{\lambda}_*} & H_{S^1}^{*}(LX.x_0) \ar[d]^{\iota_{0*}}_\cong \\
  0 \ar[r] & H^{*+1}_{[u^{-1}]}(LX,x_0) \ar[u]^{\pi_0I_*^{-1}}_\cong \ar[r]^{{-}P_{0*}}_\cong &
    H^{*}_{[[u]]}(LX,x_0) \ar[r] & 0
} 
\end{equation*}
and the corollary follows. 
%This follows directly from the description of the image and kernel of
%$I_{\lambda*}$ , 
%\begin{align*}
%  \im I_{\lambda*} &= \ker\Bigl(\iota^*:H^*_{S^1}(LX)\to\R[u]\Bigr) = H^*_{S^1}(LX,x_0),\cr
%  \ker I_{\lambda*} &= \pi_{0*}I_*^{-1}\pi^*\R[u^{-1}],
%\end{align*}
%where the last equation implies $HC_\lambda^*(\Om(X))/\ker
%I_{\lambda*} = \ol{HC}_\lambda^*(\Om(X))$. 
\end{proof}

%%%
\subsection{Computations for spheres}
%%%

For a simply connected manifold $X$, the $S^1$-equivariant cohomology
of $LX$ can be computed via minimal models as follows, see e.g.~\cite{Basu-thesis}. 
Let $(M=\Lambda[x_1,x_2,\dots],d)$ be the minimal model of $X$. To it
we associate a mixed complex $(LM,d,B)$ by setting
$LM:=\Lambda[x_1,\bar x_1,x_2,\bar x_2\cdots]$ with new generators
$\ol x_i$ of degrees $\deg\ol x_i=\deg x_i-1$, defining $B$ as the
derivation satisfying $Bx_i=\ol x_i$ and $B\ol x_i=0$, and extending
$d$ from $M$ to $LM$ by the requirement $dB+Bd=0$, i.e., by defining
$d\ol x_i:=-B(dx_i)$. Then $(LM[u],d_u=d+uB)$ is the minimal model for
the Borel space $LX\times_{S^1}ES^1$, so it computes $H^*_{S^1}(LX)$. 

Recall that a manifold $X$ is {\em formal over $\R$} (in the sense of
rational homotopy theory) if its de Rham dga $\Om^*(X)$ is connected
to its cohomology $H^*(X)$ by a zigzag of quasi-isomorphisms,
cf.~\cite{Pavel-thesis}. By Proposition~\ref{prop:quism-cyclicchains}, 
all versions of cyclic homology of $\Om^*(X)$ can then be computed
using the dga (with trivial differential) $H^*(X)$.

\begin{example}[odd dimensional spheres]
The sphere $S^n$ with $n\geq 3$ odd has minimal model $\Lambda[a]$
with $\deg a=n$ and $da=0$. So the minimal model of $LS^n\times_{S^1}ES^1$ is
$\Lambda[a,\bar a,u]$ with differential $d_u\bar a=0$ and $d_u a=u\bar a$.  
It follows that 
$$
   H_{S^1}^*(LS^n) = \Lambda[\bar a,u]\Bigl/\la u\bar a\ra = \bar a\R[\bar a]\oplus\R[u],
$$
where $u$ acts trivially on the first summand. Relative to a point
$x_0\in S^n$ this becomes
$$
   H_{S^1}^*(LS^n,x_0) = \bar a\R[\bar a],\qquad \deg\bar a=n-1.
$$
Let us now compute the reduced Connes version of cyclic homology of $\Om^*(S^n)$.
Since $S^n$ is formal, we can compute this from its cohomology $H^*(S^n)=\R
1\oplus\R v$, where $\deg v=n$, 
%and by Corollary~\ref{cor:reduced-cyc-hom} we can use 
or rather the reduced cohomology $\ol H^*(S^n)=\R v$. For $k\geq 1$ denote by 
$v^k$ the word with $k$ letters $v$. Since $t(v^k)=(-1)^{(k-1)+n^2(k-1)}v^k=v^k$,
each $v^k$ survives in the quotient by cyclic permutation, and by 
%Corollary~\ref{cor:reduced-cyc-hom} 
definition of $\ol{HC}_*^\lambda$ (with trivial Hochschild differential) we get
$$
   \ol{HC}_*^\lambda(\Om^*(S^n)) = HC_*^\lambda(\ol H^*(S^n)) =
   v\R[v],\qquad |v|=n-1.
$$
This is compatible with Theorem~\ref{thm:main-homol} if $I^{\lambda}_*$
sends $v$ to $\bar a$. 
\end{example}

\begin{example}[even dimensional spheres]
The sphere $S^n$ with $n\geq 2$ even has minimal model $\Lambda[a,b]$ 
with $\deg a=n$, $\deg b=2n-1$ and $da=0$, $db=a^2$. So the minimal model of $LS^n\times_{S^1}ES^1$ is
$\Lambda[a,\bar a,b,\bar b,u]$ with differential $d_u a=u\bar a$,
$d_ub=a^2+u\bar b$, $d\bar a=0$ and $d\bar b=-2a\bar a$.   
It follows (cf.~\cite{Basu-thesis}) that 
$$
   H_{S^1}^*(LS^n) = \bar a\Bigl(\Lambda[\bar a,u]\Bigl/\la u,2a,a^2+u\bar
   b\ra\Bigr) \oplus \R[u] = \bar a\R[\bar b]\oplus\R[u],
$$
where $u$ acts trivially on the first summand. Relative to a point
$x_0\in S^n$ this becomes
$$
   H_{S^1}^*(LS^n,x_0) = \bar a\R[\bar b],\qquad \deg\bar a=n-1,\ \deg\bar b=2n-2.
$$
Again, we can compute the reduced Connes version of cyclic homology of $\Om^*(S^n)$ 
from $\ol H^*(S^n)=\R v$, where $\deg v=n$. Since $t(v^k)=(-1)^{(k-1)+n^2(k-1)}v^k=(-1)^{k-1}v^k$,
the word $v^k$ survives in the quotient by cyclic permutation iff $k$
is odd, and by definition of $\ol{HC}_*^\lambda$ (with trivial Hochschild differential) we get
$$
   \ol{HC}_*^\lambda(\Om^*(S^n)) = HC_*^\lambda(\ol H^*(S^n)) = v\R[v^2],\qquad |v|=n-1.
$$
This is compatible with Theorem~\ref{thm:main-homol} if $I^{\lambda}_*$
sends $v^{2i+1}$ to $\bar a\bar b^i$. 
\end{example}

%%%%%%%%%%%%%%%%%%%%%%%%%%%%%%%%%%%%%%%%%%%%%%%%%%%%%%%%%%%%%%%%%%%%%%%%%%
\section{Duality}\label{sec:duality}
%%%%%%%%%%%%%%%%%%%%%%%%%%%%%%%%%%%%%%%%%%%%%%%%%%%%%%%%%%%%%%%%%%%%%%%%%%

%%%%%%%%%%%%%%%%%%%%%%%%%%%%%%%%%%%%%%%%%%%%%%%%%%%%%%%%%%%%%%%%%%%%%%%%%%
\subsection{Generalities}\label{subsec:generalities}
%%%%%%%%%%%%%%%%%%%%%%%%%%%%%%%%%%%%%%%%%%%%%%%%%%%%%%%%%%%%%%%%%%%%%%%%%%

As before, all complexes are over the ground field $\R$.
Let $(C=\bigoplus_{k\in\Z}C_k,\delta)$ be a chain complex. We
dualize it in such a way that the result is 
a chain complex as well, i.e.
$$
   \Bigl(C^\vee:=\bigoplus_{k\in\Z}\Hom(C_{-k},\R),\delta^*\Bigr). 
$$
Suppose now that $(C_j,\delta_j)$, $j=1,2$ are two chain complexes and 
$$
   \la\ ,\ \ra:C_1\otimes C_2\longrightarrow \R
$$
is a bilinear pairing of degree $0$ such that 
the differentials are mutual adjoints with respect to the pairing,
$$
   \la\delta_1x,y\ra = \la x,\delta_2y\ra.
$$
The pairing naturally gives rise to two maps
$$
   f_{12}:C_1\longrightarrow C_2^\vee,\qquad f_{21}:C_2\longrightarrow C_1^\vee
$$
by $f_{12}(x):=\left<x,\cdot\right>$ and $f_{21}(y):=\left<\cdot,y\right>$.
Since the differentials are adjoints, the maps $f_{12}$
and $f_{21}$ are chain maps. 
Denoting by $\iota_j:C_j\into (C_j^\vee)^\vee$ $j=1,2$ the
canonical embeddings into the second dual, we then have the following
equalities of chain maps: 
\begin{equation}\label{eq:refl}
f_{21}=f_{12}^*\circ\iota_2\quad \text{and}\quad  f_{12}=f_{21}^*\circ\iota_1\,.
\end{equation}
%Indeed, unwrapping definitions we get $f_{12}^*\circ\iota_2(y)=\iota_2(y)\circ f_{12}$ and $\iota_2(y)\circ f_{12}(x)=
%f_{12}(x)(y)=\left<x,y\right>=(f_{21}(y))(x)$ and similarly for the second equality.
We say that a graded vector space $E=\bigoplus_{k\in \Z}E_k$ is 
{\em graded finite dimensional} if each $E_k$ is finite dimensional. 

\begin{lemma}\label{lem:adjequiv}
In the above setting, assume that the homologies of $C_1$ and $C_2$
are graded finite dimensional. Then 
$f_{12}$ is a quasi-isomorphism if and only if $f_{21}$ is.
\end{lemma}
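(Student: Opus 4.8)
The plan is to reduce the statement to two standard facts about dualization over the field $\R$: that $\Hom(-,\R)$ is exact and hence commutes with passing to homology, and that the resulting identification is natural. The finiteness hypothesis will enter at exactly one point, through the canonical embeddings $\iota_1,\iota_2$ into the second dual.

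First I would record that for any cochain complex $(C,\delta)$ over $\R$ there is a natural isomorphism $H^k(C^\vee)\cong\bigl(H^{-k}(C)\bigr)^\vee$, coming from the exactness of $\Hom(-,\R)$ on $\R$-vector spaces (the universal coefficient isomorphism, which here is an honest isomorphism since the ground ring is a field). Naturality means that for every chain map $g:A\to B$ the induced map $(g^*)_*$ on the homologies of $A^\vee$ and $B^\vee$ corresponds, under this identification, to the dual $(g_*)^\vee$ of $g_*:H^{-k}(A)\to H^{-k}(B)$. In particular, if $g$ is a quasi-isomorphism then so is $g^*$, and no finiteness assumption is needed for this implication.

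Next I would analyze the canonical embedding $\iota_j:C_j\to(C_j^\vee)^\vee$. Applying the identification of the first step twice gives $H^k\bigl((C_j^\vee)^\vee\bigr)\cong\bigl(H^k(C_j)\bigr)^{\vee\vee}$, and under these identifications the induced map $(\iota_j)_*$ becomes the canonical double-dualization map $H^k(C_j)\to\bigl(H^k(C_j)\bigr)^{\vee\vee}$. This map is an isomorphism in degree $k$ precisely when $H^k(C_j)$ is finite dimensional. Hence the hypothesis that $H^*(C_1)$ and $H^*(C_2)$ are graded finite dimensional is exactly what is needed to conclude that $\iota_1$ and $\iota_2$ are quasi-isomorphisms.

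With these two facts in hand, the lemma follows immediately from the relations $f_{21}=f_{12}^*\circ\iota_2$ and $f_{12}=f_{21}^*\circ\iota_1$ established in~\eqref{eq:refl}. If $f_{12}$ is a quasi-isomorphism, then $f_{12}^*$ is one by the first step, and since $\iota_2$ is one by the second step, the composite $f_{21}$ is a quasi-isomorphism; the reverse implication follows symmetrically from the second relation. I expect the one genuinely delicate point to be the claim in the second step that $(\iota_j)_*$ coincides with the double-dualization map on homology, that is, that the two instances of the natural isomorphism from the first step assemble compatibly with $\iota_j$. This is a diagram chase using the naturality of the universal coefficient isomorphism together with the fact that $\iota_j$ is dual to the evaluation pairing, and it is precisely here that the graded finite dimensionality of the homology must be invoked.
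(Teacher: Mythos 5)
Your proof is correct and follows essentially the same route as the paper: the paper's argument is precisely your commuting square relating $H\iota_j$ to the double-dualization map $\iota_H$ on homology via the universal coefficient isomorphisms, with finite dimensionality invoked only to make $\iota_H$ (hence $\iota_j$) a quasi-isomorphism, and the conclusion then read off from~\eqref{eq:refl}. The only cosmetic slip is in your final sentence: the naturality diagram chase itself needs no finiteness hypothesis — that enters solely in concluding that the double-dualization map on homology is an isomorphism, as you correctly state earlier.
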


\begin{proof}
For each chain complex $C$ over $\R$ we have a commuting diagram
$$
   \xymatrix{
   H(C) \ar[d]^{\iota_H} \ar[r]^{H\iota} & H\bigl((C^\vee)^\vee\bigr) \ar[d]^{\cong}\\
   \bigl(H(C)^\vee\bigr)^\vee \ar[r]^{\cong} & H(C^\vee)^\vee,
}
$$
where $H\iota$ is the map on homology induced by the canonical
embedding $\iota:C\into (C^\vee)^\vee$, $\iota_H$ is the canonical
embedding for $H(C)$, and the two isomorphisms come from the universal
coefficient theorems. If $H(C)$ is graded finite dimensional, then the
map $\iota_H$ is an isomorphism (because finite dimensional spaces are
reflexive), hence so is $H\iota$. This shows that in the situation of
the lemma both canonical embeddings $\iota_1$ and $\iota_2$ are
quasi-isomorphisms. In view of equation~\eqref{eq:refl} this implies
the lemma, recalling that the dual of a quasi-isomorphism is again a
quasi-isomorphism. 
\end{proof}

%%%%%%%%%%%%%%%%%%%%%%%%%%%%%%%%%%%%%%%%%%%%%%%%%%%%%%%%%%%%%%%%%%%%%%%%%%
\subsection{Duality of mixed complexes}\label{subsec:dual-mixed}
%%%%%%%%%%%%%%%%%%%%%%%%%%%%%%%%%%%%%%%%%%%%%%%%%%%%%%%%%%%%%%%%%%%%%%%%%%

We now generalize the preceding discussion to mixed complexes. The dual
mixed complex to $(C,\delta,D)$ is defined as $(C^\vee,\delta^*,D^*)$. 
Suppose now that $(C_j,\delta_j,D_j)$, $j=1,2$ are two mixed complexes and  
$$
   \la\ ,\ \ra:C_1\otimes C_2\longrightarrow \R
$$
is a bilinear pairing of degree zero such that 
both differentials are mutual adjoints with respect to the pairing,
$$
   \la\delta_1x,y\ra = \la x,\delta_2y\ra,\quad \text{and} \quad \la
   D_1x,y\ra = \la x,D_2y\ra.
$$
This implies that the maps $f_{12}$ and $f_{21}$ are morphisms of
mixed complexes, and Lemma~\ref{lem:adjequiv} yields

\begin{cor}\label{cor:adjequivmixed}
In the above setting assume that the homologies $H(C_1,\delta_1)$ and
$H(C_2,\delta_2)$ are graded finite dimensional.  
Then $f_{12}$ is a quasi-isomorphism of mixed complexes if and only if $f_{21}$ is.
\hfill$\square$
\end{cor}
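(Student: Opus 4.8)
The plan is to reduce the corollary directly to Lemma~\ref{lem:adjequiv}, the point being that for mixed complexes the notion of quasi-isomorphism refers only to the differential $\delta$ and ignores $D$ entirely. Indeed, by the definition of a quasi-isomorphism of mixed complexes, a morphism $f_{12}:(C_1,\delta_1,D_1)\to(C_2^\vee,\delta_2^*,D_2^*)$ is a quasi-isomorphism precisely when the induced map on $\delta$-homology $H(C_1,\delta_1)\to H(C_2^\vee,\delta_2^*)$ is an isomorphism. This is word for word the condition that $f_{12}$ be a quasi-isomorphism of the underlying cochain complexes in the sense of Lemma~\ref{lem:adjequiv}, and likewise for $f_{21}$.

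Before invoking the lemma I would record that $f_{12}$ and $f_{21}$ are indeed morphisms of mixed complexes, so that the phrase ``quasi-isomorphism of mixed complexes'' is meaningful. This is exactly the content of the two adjointness hypotheses: $\langle\delta_1 x,y\rangle=\langle x,\delta_2 y\rangle$ gives $\delta_2^*f_{12}=f_{12}\delta_1$ and $\delta_1^*f_{21}=f_{21}\delta_2$, while $\langle D_1 x,y\rangle=\langle x,D_2 y\rangle$ gives the analogous identities $D_2^*f_{12}=f_{12}D_1$ and $D_1^*f_{21}=f_{21}D_2$. These are precisely the compatibility conditions in the definition of a morphism of mixed complexes, and they were already recorded in the discussion preceding the corollary.

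It then only remains to apply Lemma~\ref{lem:adjequiv} to the underlying cochain complexes $(C_1,\delta_1)$ and $(C_2,\delta_2)$ equipped with the same pairing: the hypothesis that $H(C_1,\delta_1)$ and $H(C_2,\delta_2)$ be graded finite dimensional is verbatim that of the lemma, and the conclusion that $f_{12}$ induces an isomorphism on $\delta$-homology if and only if $f_{21}$ does is exactly the asserted equivalence. There is essentially no obstacle to overcome here, since all of the substantive work --- the appeal to the universal coefficient theorem and to the reflexivity of graded finite dimensional spaces --- is already carried out in the proof of Lemma~\ref{lem:adjequiv}. The only thing requiring any care is the conceptual observation of the first paragraph, namely that enriching a cochain complex to a mixed complex leaves the meaning of quasi-isomorphism unchanged, so that the cochain-level lemma transfers without modification.
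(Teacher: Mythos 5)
Your proof is correct and follows exactly the paper's route: the paper likewise notes that the adjointness of both differentials makes $f_{12}$ and $f_{21}$ morphisms of mixed complexes and then derives the statement directly from Lemma~\ref{lem:adjequiv}, since quasi-isomorphism of mixed complexes is tested only on $\delta$-homology. No further comment needed.
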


Let now $(C,\delta,D)$ be a mixed complex. We want to investigate the
relation between the total complex of its dual and the dual of its
total complex. For concreteness, let us consider the version $C[u^{-1}]$.
We define a degree zero pairing
\begin{equation}\label{eq:duality-pairing}
   \la\ ,\ \ra:C^\vee[[u]]^{-k}\otimes C[u^{-1}]_k\to\R,\qquad
   \la\phi,c\ra := \sum_{i\geq 0}\phi_i(c_{-i})
\end{equation}
where $\phi=\sum_{i\geq 0}\phi_iu^i$ with $\phi_i\in
(C^\vee)^{-k-2i}=\Hom(C_{k+2i},\R)$, and $c=\sum_{i\geq
  0}c_{-i}u^{-i}$ with $c_{-i}\in C_{k+2i}$. Note that the sum
$\sum_{i\geq 0}\phi_i(c_{-i})$ is finite because only finitely many
$c_{-i}$ are nonzero. Direct computation yields

\begin{lemma}\label{lem:dualcomm}
The pairing~\eqref{eq:duality-pairing} induces via
$\iota(\phi)(c)=\la\phi,c\ra$ a chain isomorphism 
$$
   \iota:\Bigl(C^\vee[[u]],\delta^*+uD^*\bigr)\stackrel{\cong}\longrightarrow \Bigl(C[u^{-1}]^\vee,(\delta+uD)^*\Bigr)
$$
respecting the $\R[u]$-module structures with $|u|=2$ on both sides.
Similarly, we obtain the chain isomorphisms
$$
   C^\vee[[u^{-1}]]\cong C[u]^\vee\quad\text{and}\quad C^\vee[[u,u^{-1}]]\cong C[u,u^{-1}]^\vee.
$$
\hfill$\square$
\end{lemma}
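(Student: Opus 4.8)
The plan is to treat all three claimed isomorphisms in parallel, since they rest on the same two ingredients: a purely algebraic duality between direct sums and products, and an adjointness computation for the total differential. I focus on the first isomorphism $\iota:(C^\vee[[u]],\delta^*+uD^*)\to(C[u^{-1}]^\vee,(\delta+uD)^*)$; the other two are identical after relabelling powers of $u$.

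First I would check that $\iota$ is well defined and bijective as a map of graded vector spaces. The key observation is that $C[u^{-1}]$ is polynomial in $u^{-1}$: in degree $k$ it equals the direct sum $\bigoplus_{i\geq 0}C^{k+2i}u^{-i}$. Dualizing turns this direct sum into a product, $\Hom(\bigoplus_{i\geq 0}C^{k+2i},\R)=\prod_{i\geq 0}\Hom(C^{k+2i},\R)$, and the degree $-k$ part of $C^\vee[[u]]$ is exactly this product $\prod_{i\geq 0}(C^\vee)^{-k-2i}$, since power series allow infinitely many nonzero coefficients. Under this identification $\iota$ becomes the identity, so it is an isomorphism in each degree; the finiteness of $\la\phi,c\ra=\sum_{i\geq 0}\phi_i(c_{-i})$ from~\eqref{eq:duality-pairing}, needed for $\iota$ to land in $C[u^{-1}]^\vee$, is guaranteed because $c$ has only finitely many nonzero coefficients. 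This direct-sum/product duality is the conceptual reason polynomials in $u^{-1}$ are dual to power series in $u$.

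Next I would verify the chain map property, which amounts to the adjointness $\la(\delta^*+uD^*)\phi,c\ra=\la\phi,(\delta+uD)c\ra$. I would split this into the $\delta$ and the $D$ contributions. The $\delta$ terms match immediately, since $\delta^*$ is the transpose of $\delta$. For the $D$ terms, multiplication by $u$ shifts the index, so $(uD^*)\phi$ pairs the coefficient $D^*\phi_{m-1}$ against $c_{-m}$, while $(uD)c$ pairs $\phi_m$ against $Dc_{-(m+1)}$; a single reindexing $m\leftrightarrow m+1$ shows these agree. Because $|u|=2$ is even and $\delta^*,D^*$ are literal transposes in the sign convention fixed in Subsection~\ref{subsec:generalities}, no Koszul signs intervene, and the two sides coincide term by term. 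The same reindexing argument gives $\R[u]$-linearity, $\la u\phi,c\ra=\la\phi,uc\ra$, where on the quotient $C[u^{-1}]$ one uses that $u$ annihilates the top coefficient.

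Finally, for $C^\vee[[u^{-1}]]\cong C[u]^\vee$ and $C^\vee[[u,u^{-1}]]\cong C[u,u^{-1}]^\vee$ I would run the identical argument: in each case the polynomial (resp.\ Laurent polynomial) side is a direct sum over the relevant index set, its dual is the corresponding product, and power series supply precisely that product, with finiteness of the pairing again following from finite support on the polynomial side. I expect no genuine obstacle here, as the content is entirely the two points above; the step requiring the most care is the bookkeeping of gradings and the index shifts in the adjointness computation, together with confirming that the chosen sign conventions make the dual differentials exact transposes, so that the chain map identity holds on the nose.
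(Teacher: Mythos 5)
Your proof is correct and is exactly the "direct computation" the paper alludes to (the paper states the lemma with no written proof beyond that phrase): the degreewise identification of the dual of a direct sum with a product, the reindexed adjointness check for $\delta+uD$ including the truncation of the $u^{+1}$ term on the quotient side, and the $\R[u]$-linearity all match the intended argument. Nothing to add.
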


Finally, note that for a mixed complex $(C,\delta,D)$ and its dual we
have a commuting diagram of chain maps (with respect to $\delta^*$)
$$
\xymatrix{
   \im D^* \ar[d] \ar[r]^\cong & (\im D)^\vee \ar[r]_{D^*}^\cong & (C/\ker D)^\vee \ar[d]\\
   \ker D^* \ar[rr]_\phi^\cong & & (C/\im D)^\vee
}
$$
where the maps $D^*$ and $\phi$ have degree $-1$. On homology this yields
\begin{equation}\label{eq:im-ker-D}
\xymatrix{
   H(\im D^*) \ar[d] \ar[r]^\cong & H(\im D)^\vee \ar[r]_{(D^*)_*}^\cong & H(C/\ker D)^\vee \ar[d]\\
   H(\ker D^*) \ar[rr]_{\phi_*}^\cong & & H(C/\im D)^\vee.
}
\end{equation}

\subsection{Equivariant homology of $S^1$-spaces}\label{subsec:hom-S1}
%%%%%%%%%%%%%%%%%%%%%%%%%%%%%%%%%%%%%%%%%%%%%%%%%%%%%%%%%%%%%%%%%%%%%%%%%%

Let $Y$ be a topological $S^1$-space. It was shown in the proof of
Lemma~\ref{lem:S1-space-dgmod} that
$$
   (C_{-*}(Y),\p,Q)
$$
is a mixed complex, where $(C_*(Y),\p)$ is the singular chain complex and $Q$ 
is the map~\eqref{eq:Q} induced by the circle action. 
Note that we grade the singular chains negatively to give $\p$ and $Q$
degrees $1$ and $-1$, respectively. 
The homology of this mixed complex is the (negatively graded) singular homology
$H_{-*}(Y)$, and its dual is the mixed complex $(C^*(Y),d,P)$ in Lemma~\ref{lem:S1-space-dgmod}.
We denote the cyclic homology of $(C_{-*}(Y),\p,Q)$ by
$H_{-*}^{[[u]]}(Y)$ etc.

Lemma 5.1 in~\cite{Jones} and the fact that $C_{-*}(Y)$ lives in
nonpositive degrees imply the following dual version of
Proposition~\ref{prop:Borel}. 

\begin{proposition}[Jones~\cite{Jones}]\label{prop:Borel-dual}
For each topological $S^1$-space $Y$ we have canonical isomorphisms 
\begin{equation*}
   H^{S^1}_{-*}(Y) \cong H_{-*}^{[u^{-1}]}(Y) \cong H_{-*}^{[[u^{-1}]]}(Y).
\end{equation*}
\hfill$\square$
\end{proposition}

As in Section~\ref{subsec:circle-to-mixed}, for an $S^1$-space $Y$ with
a fixed point $y_0$ the inclusion and projection
$\pt\stackrel{\iota}\longrightarrow Y\stackrel{\pi}\longrightarrow\pt$,
$\iota(\pt)=y_0$, induce the following commuting diagram, where all the vertical maps
$\iota_*$ are injective and we have surjective vertical maps $\pi_*$
in the other direction:
\begin{equation}\label{eq:proj-point-dual}
\begin{CD}
   H_{-*}^{[[u,u^{-1}]}(Y) @>{p_*}>> H_{-*}^{[u^{-1}]}(Y) @>{Q_0}>>
       H_{-*-1}^{[[u]]}(Y) @>{\cdot u}>> H_{-*+1}^{[[u,u^{-1}]}(Y) \\
  @AA{\iota_*}A @AA{\iota_*}A @AA{\iota_*}A @AA{\iota_*}A \\
  \R[u,u^{-1}] @>{p_*}>> \R[u^{-1}] @>0>>
       \R[u] @>{\cdot u}>> \R[u,u^{-1}]\;. 
\end{CD}
\end{equation}

\begin{remark}
Lemma~\ref{lem:free-action} has the following dual version:
If $Y$ is a smooth $S^1$-space without fixed points, then
$H_{-*}^{[u,u^{-1}]]}(Y)=0$ and we have canonical isomorphisms 
\begin{equation*}
\begin{CD}
   H_{-*}^{[u^{-1}]}(Y) @>{\cong}>> H_{-*+1}^{[[u^{-1}]]}(Y) @>{\cong}>> H_{-*}(Y/S^1) 
   @>{\cong}>> H_{-*}^{[u]}(Y).
\end{CD}
\end{equation*}
For $Y$ with fixed points, applying this to $Y\times ES^1$ provides an
alternative proof of Proposition~\ref{prop:Borel-dual} in the smooth case. 
\end{remark}

%%%%%%%%%%%%%%%%%%%%%%%%%%%%%%%%%%%%%%%%%%%%%%%%%%%%%%%%%%%%%%%%%%%%%%%%%%
\subsection{Finiteness}\label{subsec:finiteness}
%%%%%%%%%%%%%%%%%%%%%%%%%%%%%%%%%%%%%%%%%%%%%%%%%%%%%%%%%%%%%%%%%%%%%%%%%%

In this subsection we prove two finiteness results on homology. 

\begin{lemma}\label{lem:findga}
Let $(A,d)$ be a dga whose cohomology $H^*(A)$ is graded finite
dimensional. Assume in addition that $\dim H^0(A)=1$ and $\dim
H^1(A)=0$. Then the Hochschild homology $HH(A)$ is graded 
finite dimensional. 
\end{lemma}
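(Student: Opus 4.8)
The plan is to compute $HH^*(A)$ on the normalized Hochschild complex, where the hypotheses force the relevant internal homology into degrees $\ge 2$, so that each total degree receives contributions from only finitely many tensor slots.

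First I would pass from $C(A)$ to the normalized complex $\ol C(A)$, with $\ol C_n(A)=A\otimes\bar A^{\otimes n}$ and $\bar A=A/(\R\cdot 1)$; these compute the same Hochschild homology since $p\colon C(A)\to\ol C(A)$ is a quasi-isomorphism of mixed complexes (Section~\ref{sec:dga}). The next point is to locate the $d$-cohomology of $\bar A$. From the short exact sequence $0\to\R\cdot 1\to A\to\bar A\to 0$ and its long exact sequence, the hypotheses $H^0(A)=\R$ (with the unit as generator) and $H^1(A)=0$ give $H^0(\bar A)=H^1(\bar A)=0$; since the dgas at hand (e.g.\ de Rham complexes) are non-negatively graded, $H^{<0}(A)=0$, and hence $H^*(\bar A)$ is graded finite dimensional and concentrated in degrees $\ge 2$.

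The core of the argument is a filtration–stabilization estimate on the column-filtered double complex $(\ol C(A),d+b)$, whose $n$-th column is $(\ol C_n(A),d)$ and whose horizontal differential $b$ lowers $n$ by one. I would set $F_p:=\bigoplus_{n\le p}\ol C_n(A)$; because $b$ decreases $n$ these are subcomplexes, they exhaust $\ol C(A)$, and $F_p/F_{p-1}\cong(\ol C_p(A),d)$ with its bare internal differential. By the Künneth formula $H^*(\ol C_p(A),d)\cong H^*(A)\otimes H^*(\bar A)^{\otimes p}$, and, recalling the grading $|c|=\deg(c)-n$ together with the fact that each $\bar A$-factor carries internal degree $\ge 2$, a class of total degree $k$ has internal degree $\ge 2p$, so $k=|c|=\deg-p\ge p$. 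Thus $H^k(\ol C_p(A),d)$ is finite dimensional for each $k$ (a fixed-degree piece of a finite tensor product of graded finite dimensional spaces) and vanishes whenever $k<p$. Feeding the long exact sequence of $0\to F_{p-1}\to F_p\to\ol C_p(A)\to 0$ into an induction on $p$ (base case $F_0=(A,d)$) shows that every $H^k(F_p)$ is finite dimensional; the same sequence, with $H^{k-1}(\ol C_p)=H^k(\ol C_p)=0$ for $p>k$, shows that $H^k(F_{p-1})\to H^k(F_p)$ is an isomorphism once $p>k$. Since homology commutes with the filtered colimit $\ol C(A)=\varinjlim_p F_p$, I would conclude $HH^k(A)=\varinjlim_p H^k(F_p)=H^k(F_p)$ for $p\ge\max(k,0)$, which is finite dimensional.

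The only genuine obstacle is that the Hochschild complex is an infinite sum over the number $n$ of tensor slots, so a priori a single total degree $k$ mixes all $n$; the concentration of $H^*(\bar A)$ in degrees $\ge 2$, which is exactly where the hypotheses on $H^0$ and $H^1$ enter, collapses this to the range $0\le n\le k$ and makes the direct system $\{H^k(F_p)\}_p$ stabilize. I would take care that the column differential is the bare internal $d$ (so that Künneth applies) and that $b$ maps $F_p$ into $F_{p-1}$ (so that the quotient differential reduces to $d$), and I would make explicit the non-negativity of the grading used to place $H^*(\bar A)$ in degrees $\ge 2$.
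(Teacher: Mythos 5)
Your proof is correct and follows essentially the same route as the paper's: both rest on the word-length filtration and on the observation that, after discarding degenerate words, the hypotheses $\dim H^0(A)=1$ and $H^1(A)=0$ place $H^*(\bar A)$ in degrees $\ge 2$, so that total degree $k$ only receives contributions from word lengths $\le k$. The paper packages this as the spectral sequence of the filtration (checking graded finite dimensionality already on $E_1/D(A)$), whereas you unpack it into long exact sequences and a stabilization argument on $\ol{C}(A)$; this is only a difference in presentation, and you are right to flag that both versions implicitly use that $A$ sits in non-negative degrees.
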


\begin{proof}
Consider the word length filtration on the Hochschild complex. This
filtration is bounded from below  and exhaustive, therefore the
corresponding spectral sequence converges to $HH(A)$. It is enough
to show that  graded finite dimensionality holds for the second
page. The first page computes to $E_1^{p,q}=H^p(A^{\otimes(q+1)},d)$
and the second page to 
$$
   E_2^{p,q}=H^q(E_1,b)^p=H^q(E_1/D(A),b)^p
$$ 
where $q$ denotes the word length degree, $p$ the degree in $A$, and
$D(A)$ is the acyclic subcomplex generated by words with $1$ in some
positive slot considered in Section~\ref{sec:dga}. We will show that
the desired finite dimensionality holds even before we take the
homology with respect to $b$. Fix some degree $k=p-q$ for the chain
complex $E_1/D$ and write out the degree $k$ part of the complex,
$$
   (E_1/D)^k=\bigoplus_{p-q=k}(E_1/D)^{p,q}
$$
Since $H^1(A)=0$ and we have factored out $D(A)$, we have
$(E_1/D)^{p,q}=0$ for $p< 2q$. So the sum runs over $p$ that satisfy
$p\ge 2q$, in other words $k=p-q\ge q$. This leaves us with only
finitely many options for $q$. Therefore, we have only finitely many
nonzero summands in $(E_1/D)^k$ and thus $\dim (E_1/D)^k<\infty$. 
\end{proof}

%Note that Lemma~\ref{lem:findga} can be combined with
%Lemma~\ref{lem:finmixed} to conclude graded finite dimensionality for
%the $3$ versions of cyclic homology. 
Note that Lemma~\ref{lem:findga} applies in particular to $A=\Om^*(X)$
for a simply connected manifold $X$.

\begin{lemma}\label{lem:finloopspace}
If $X$ is a simply connected manifold, then $H_*(LX)$ and $H_*^{S^1}(LX)$
%as well as $HC_*(LM)[[u]]$, $HC_*(LM)[[u,u^{-1}]$ and $HC_*(LM)[u^{-1}]$
are graded finite dimensional.
\end{lemma}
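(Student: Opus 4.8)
The plan is to deduce both finiteness statements from the finiteness of ordinary loop space cohomology $H^*(LX)$, which itself comes from Chen's iterated integral together with Lemma~\ref{lem:findga}, and then to obtain the homology statements by dualizing. First I would invoke Theorem~\ref{thm:GJP}: for simply connected $X$ the iterated integral $I$ induces an isomorphism on Hochschild homology, so $H^*(LX)\cong HH^*(\Om(X))$. By the note following Lemma~\ref{lem:findga} (which observes that the lemma applies to $A=\Om^*(X)$ for simply connected $X$, using $\dim H^0=1$ and $\dim H^1=0$), the right hand side is graded finite dimensional; hence so is $H^*(LX)$. Since we work over the field $\R$, the universal coefficient theorem identifies $H_k(LX)$ with the dual $H^k(LX)^\vee$, so graded finite dimensionality of $H^*(LX)$ immediately yields the same for $H_*(LX)$.

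For the equivariant groups I would first prove that the equivariant cohomology $H^*_{S^1}(LX)$ is graded finite dimensional and then dualize. By Proposition~\ref{prop:Borel} we have $H^*_{S^1}(LX)\cong H^*_{[u]}(LX)=HC^*_{[u]}(C^*(LX))$, so it suffices to bound the latter. Here I would apply the Gysin (Connes) exact sequence of Proposition~\ref{prop:gysin} to the mixed complex $(C^*(LX),d,P)$,
$$
   \cdots \to HC^{*-2}_{[u]} \xrightarrow{\cdot u} HC^*_{[u]} \to H^*(LX) \to HC^{*-1}_{[u]} \to \cdots,
$$
and argue by induction on the total degree. Because $C^*(LX)$ lives in nonnegative degrees, the complex $C^*(LX)[u]$ vanishes in negative total degree, giving $HC^k_{[u]}=0$ for $k<0$ as the base case. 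For $k\geq 0$ the exact sequence exhibits $HC^k_{[u]}$ as an extension of a subspace of $H^k(LX)$ by a quotient of $HC^{k-2}_{[u]}$; since $H^k(LX)$ is finite dimensional by the previous step and $HC^{k-2}_{[u]}$ is finite dimensional by induction, $HC^k_{[u]}$ is finite dimensional as well. This proves that $H^*_{S^1}(LX)$ is graded finite dimensional, and a final application of universal coefficients to the Borel space $LX\times_{S^1}ES^1$ identifies $H_*^{S^1}(LX)$ with $H^*_{S^1}(LX)^\vee$, which is therefore graded finite dimensional too.

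The genuinely substantive input is the finiteness of $H^*(LX)$, which rests entirely on Theorem~\ref{thm:GJP} and Lemma~\ref{lem:findga}; everything afterwards is formal. The one point requiring care, and which I expect to be the main obstacle if approached naively, is the equivariant step: one cannot conclude finiteness by merely inspecting $C^*(LX)[u]$ in each total degree, since the singular cochain groups $C^k(LX)$ are themselves infinite dimensional. This is precisely why I would pass to homology and run the induction through the Gysin sequence, where only the (finite dimensional) cohomology $H^*(LX)$ and the lower equivariant groups enter. The remaining dualization steps are routine consequences of the universal coefficient theorem over a field, using that $V^\vee$ finite dimensional forces $V$ finite dimensional.
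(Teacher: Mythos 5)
Your proof is correct, but it takes a genuinely different route from the paper. The paper argues via rational homotopy theory: it takes the Sullivan minimal model $M=(\Lambda[x_1,x_2,\dots],d)$ of $X$, notes (citing F\'elix--Halperin--Thomas) that there are only finitely many generators in each degree and all have degree $\geq 2$, and then observes that the minimal models $\Lambda[x_i,\bar x_i]$ of $LX$ and $\Lambda[x_i,\bar x_i,u]$ of the Borel space have all generators in strictly positive degrees, which forces graded finite dimensionality of the models and hence of their homology. You instead reuse results already in play: Theorem~\ref{thm:GJP} gives $H^*(LX)\cong HH^*(\Om(X))$, Lemma~\ref{lem:findga} gives finiteness of the latter, Proposition~\ref{prop:Borel} plus an induction through the Gysin sequence of Proposition~\ref{prop:gysin} handles the equivariant case (with the correct base case $HC^k_{[u]}=0$ for $k<0$ since $C^*(LX)$ is nonnegatively graded), and the universal coefficient theorem over $\R$ transfers everything from cohomology to homology. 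Your observation that one cannot just count dimensions in $C^*(LX)[u]$ degreewise is exactly right, and the Gysin induction is the correct fix. What each approach buys: the paper's argument is self-contained within rational homotopy theory and independent of Chen's theorem, which is tidier given that Lemma~\ref{lem:finloopspace} is later used (in Corollary~\ref{cor:GJP-dual}) to dualize the Chen quasi-isomorphism --- your route keeps Theorem~\ref{thm:GJP} as an essential input throughout, which is logically fine since that theorem is an external citation, but makes the dependency chain longer; on the other hand, your argument avoids invoking minimal model theory for $LX$ and the Borel space altogether and derives everything from structures the paper has already set up. One small imprecision: the universal coefficient theorem identifies $H^k$ with $(H_k)^\vee$ rather than the reverse, but since a vector space with finite dimensional dual is itself finite dimensional, your conclusion stands.
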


\begin{proof}
Consider the Sullivan minimal model $M=(\Lambda[x_1,x_2,\dots],d)$ for
$X$, where $\Lambda[x_1,x_2,\dots]$ is the free graded commutative
algebra on generators $x_i$ of degrees $\deg x_i\ge 2$. 
Moreover, by~\cite[Proposition 12.2]{Felix-Halperin-Thomas}
there are only finitely many generators $x_i$ of any given degree.
Then the minimal model for $LX$ is $LM=(\Lambda[x_1,\bar x_1,x_2,\bar
  x_2,\dots],\delta)$, with $\deg\bar x_i=\deg x_i-1$ and a suitable
differential $\delta$. Since all generators of $LM$ have
strictly positive degrees, $LM$ is graded finite dimensional, hence so
is its homology $H_*(LX)$. Graded finite dimensionality of $H_*^{S^1}(LX)$
follows by the same argument from its minimal model $(\Lambda[x_1,\bar x_1,x_2,\bar
  x_2,\dots,u],\delta_u)$, with $\deg u=2$ and a suitable
differential $\delta_u$. 
\end{proof}

%%%%%%%%%%%%%%%%%%%%%%%%%%%%%%%%%%%%%%%%%%%%%%%%%%%%%%%%%%%%%%%%%%%%%%%%%%
\subsection{Cyclic cohomology}\label{subsec:cyc-coh}
%%%%%%%%%%%%%%%%%%%%%%%%%%%%%%%%%%%%%%%%%%%%%%%%%%%%%%%%%%%%%%%%%%%%%%%%%%

Consider a mixed complex $(C,\delta,D)$ and its dual mixed complex
$(C^\vee,\delta^*,D^*)$. The {\em cyclic cohomology} of $C$ is defined as
$$
   HC_{[u,u^{-1}]]}^* := H(C^\vee)_{-*}^{[u,u^{-1}]]},
$$
and similarly for the other seven versions. Lemma~\ref{lem:dualcomm}
and the universal coefficient theorem yield
\begin{equation}\label{eq:dualcomm}
   HC^k_{[[u]]} = H(C^\vee)^{[[u]]}_{-k} \cong
     \bigl((HC^{[u^{-1}]})^\vee\bigr)_{-k} = \Hom(HC_k^{[u^{-1}]},\R),
\end{equation}
and similarly for the other two versions in Lemma~\ref{lem:dualcomm}.
Thus results about polynomial versions of cyclic homology
dualize to results about the corresponding power series versions of
cyclic cohomology. 

Consider now a cyclic cochain complex $(C_n,d_i,s_j,t_n,d)$ with its
associated mixed complex $(C:=\bigoplus_{n\geq 0}C_n,d+b,B)$. Its {\em
  Connes' version of cyclic cohomology} is defined as
\begin{align*}
   HC^*_\lambda &:= H_{-*-1}\bigl((C/\im(1-t))^\vee,d^*+b^*\bigr) 
   = H_{-*-1}\bigl(\ker(1-t)^*,d^*+b^*\bigr) \cr
   &\cong H_{-*-1}\bigl(C^\vee/\im(1-t^*),d^*+b^*\bigr),
\end{align*}
where the last isomorphism is induced by the inverse of the chain isomorphism
$N^*:C^\vee/\im(1-t^*)\stackrel{\cong}\longrightarrow \im N^*=\ker(1-t^*)\subset C^\vee$.
Recall from Corollary~\ref{cor:cyccochain}
the series of canonical isomorphisms
\begin{equation}\label{eq:cyccomplex-isos}
   HC_*^{[u^{-1}]}\cong HC_*^{[[u^{-1}]]}\cong HC_{*-1}^\lambda\cong HC_{*-1}^{[u]}.
\end{equation}
In view of equation~\eqref{eq:dualcomm}, dualizing the first, third
and fourth terms yields the isomorphisms
$$
   HC^{*-1}_{[[u^{-1}]]} \cong HC^{*-1}_\lambda \cong HC^*_{[[u]]}.
$$
Moreover, the proof of Lemma~\ref{lem:cyccomplex-computation} (which
uses only exactness of the rows in the $\theta$ double complex)
carries over to $C^\vee$ to yield the isomorphisms
$$
   HC^{*-1}_{[[u^{-1}]]} \cong HC^{*-1}_\lambda \cong HC^*_{[u]}.
$$
Combining these, we have proved

\begin{lemma}\label{lem:cyccomplex-dual}
For a cyclic cochain complex $(C_n,d_i,s_j,t_n,d)$, the canonical maps
on cyclic cohomology give the series of isomorphisms 
$$
   HC^{*-1}_{[[u^{-1}]]} \cong HC^{*-1}_\lambda \cong HC^*_{[u]} \cong HC^*_{[[u]]}.
$$
\hfill$\square$
\end{lemma}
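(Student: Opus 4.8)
The plan is to derive the asserted chain of isomorphisms by dualizing the corresponding chain on cyclic \emph{homology} and then patching in the one version that dualization does not reach directly. The starting point is the series of canonical isomorphisms $HC^*_{[u^{-1}]}\cong HC^*_{[[u^{-1}]]}\cong HC^{*-1}_\lambda\cong HC^{*-1}_{[u]}$ from Corollary~\ref{cor:cyccochain} (equation~\eqref{eq:cyccomplex-isos}), all of whose constituent maps are the canonical ones.

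First I would dualize. Working over the field $\R$, the universal coefficient theorem gives $H(C^\vee)\cong H(C)^\vee$ with no finiteness hypothesis, so the chain isomorphisms of Lemma~\ref{lem:dualcomm} together with the resulting formula~\eqref{eq:dualcomm} turn the $[u^{-1}]$, $\lambda$ and $[u]$ terms of~\eqref{eq:cyccomplex-isos} into their cohomological counterparts. Concretely, the first term dualizes to $HC_*^{[[u]]}$ via~\eqref{eq:dualcomm}, the Connes term to $HC_{*-1}^\lambda$, and the fourth term to $HC_{*-1}^{[[u^{-1}]]}$ via the identification $C^\vee[[u^{-1}]]\cong C[u]^\vee$. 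Since the canonical maps are mutually adjoint, the isomorphisms persist after dualization, yielding $HC_{*-1}^{[[u^{-1}]]}\cong HC_{*-1}^\lambda\cong HC_*^{[[u]]}$.

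The remaining version $HC_*^{[u]}$ cannot be produced this way, because Lemma~\ref{lem:dualcomm} matches only the power-series versions on the $C^\vee$-side with the polynomial versions on the $C$-side, and the power-series term $HC^*_{[[u^{-1}]]}$ of~\eqref{eq:cyccomplex-isos} has no such convenient dual. Instead I would observe that the dual mixed complex $C^\vee$ inherits the operators $b^*,(b')^*,t^*,N^*,s^*$ satisfying the dual of the relations~\eqref{eq:commute}, so it carries its own $\theta$-double complex whose rows are exact (row-exactness being self-dual). Hence the zigzag argument proving Lemma~\ref{lem:cyccomplex-computation}, which uses nothing beyond exactness of the rows and vanishing of the relevant total homology, applies verbatim to $C^\vee$ and produces $HC_{*-1}^{[[u^{-1}]]}\cong HC_{*-1}^\lambda\cong HC_*^{[u]}$.

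Finally I would splice the two chains along their common segment $HC_{*-1}^{[[u^{-1}]]}\cong HC_{*-1}^\lambda$ to obtain $HC_{*-1}^{[[u^{-1}]]}\cong HC_{*-1}^\lambda\cong HC_*^{[u]}\cong HC_*^{[[u]]}$; in particular the polynomial-versus-power-series identity $HC_*^{[u]}\cong HC_*^{[[u]]}$ drops out for free, rather than requiring a separate dual of Lemma~\ref{lem:our-iso}. The main obstacle, and the only genuinely nonformal step, is precisely the treatment of $HC_*^{[u]}$: one must check that the computation in Lemma~\ref{lem:cyccomplex-computation} is insensitive to passing to the dual, i.e.\ that the row-exactness and the acyclicity invoked in the zigzag are preserved under dualization over $\R$. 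Everything else is bookkeeping of gradings and adjointness.
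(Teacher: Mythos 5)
Your proposal is correct and follows essentially the same route as the paper: dualize the first, third and fourth terms of~\eqref{eq:cyccomplex-isos} via Lemma~\ref{lem:dualcomm} and~\eqref{eq:dualcomm} to get $HC_{*-1}^{[[u^{-1}]]}\cong HC_{*-1}^\lambda\cong HC_*^{[[u]]}$, then rerun the zigzag argument of Lemma~\ref{lem:cyccomplex-computation} on $C^\vee$ (whose $\theta$-double complex still has exact rows) to obtain the $HC_*^{[u]}$ term, and splice the two chains. Your observation that only row-exactness is needed and that it survives dualization over $\R$ is exactly the point the paper makes parenthetically.
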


Note that this series of isomorphisms differs from~\eqref{eq:cyccomplex-isos}
in the degrees and by the appearance of the $[u]$ rather than the
$[u^{-1}]$ version.

%%%%%%%%%%%%%%%%%%%%%%%%%%%%%%%%%%%%%%%%%%%%%%%%%%%%%%%%%%%%%%%%%%%%%%%%%%
\subsection{Chen's iterated integral on cyclic cohomology}\label{subsec:Chen-coh}
%%%%%%%%%%%%%%%%%%%%%%%%%%%%%%%%%%%%%%%%%%%%%%%%%%%%%%%%%%%%%%%%%%%%%%%%%%

Let $X$ be a manifold and $\Om(X)$ its de Rham dga. Recall the mixed
complexes $\bigl(C_*(\Om(X)),d+b,B\bigr)$ from Corollary~\ref{cor:alg-ana}, and 
$\Bigl(C_{-*}(LX),\p,Q\bigr)$ from Section~\ref{subsec:hom-S1} for $Y=LX$.
Let
$$
   \la\cdot,\cdot\ra:C_*(\Om(X))\otimes C_{-*}(LX)\to\R
$$
be the Chen pairing~\ref{eq:chenpair} from Section~\ref{sec:chen}. By
the proof of Proposition~\ref{prop:chen} this pairing respects the structures of
mixed complexes, so it induces two maps of mixed complexes: 
Chen's iterated integral $I:C_*(\Om(X))\to C^*(LX)$, and its adjoint
$$
   J:\bigl(C_{-*}(LX),\p,{-}Q\bigr)\to \bigl(C^{-*}(\Om(X)),d^*+b^*,B^*\bigr).
$$
Similarly, the cyclic Chen pairing $\la\ ,\ \ra_\cyc$ defined in~\eqref{eq:cycchenpair}
induces two chain maps: Connes' version of Chen's iterated integral $I^\lambda$,
and its adjoint
$$
   J_\lambda:\bigl(C_{-*+1}(LX),\p\bigr)\to \bigl(C^{-*}(\Om(X)),d^*+b^*\bigr).
$$
Lemma~\ref{lem:Ilambda} and the discussion following it dualize to

\begin{lemma}\label{lem:Jlambda}
(a) $J_\lambda$ is given by the composition of chain maps
$$
   J_\lambda:C_{-*+1}(LX)\stackrel{J}\longrightarrow C^{-*+1}(\Om(X))
   \stackrel{B^*}\longrightarrow C^{-*}(\Om(X)).
$$
(b) $J_\lambda$ lands in $\bigl(C_*(\Om(X))/\im(1-t)\bigr)^\vee = \ker(1-t^*)=C_\lambda^{-*+1}(\Om(X))$
and induces a map
%$\ol J_\lambda$ fitting into the commuting diagram
%$$
%\xymatrix{
%   C_{-*}(LX) \ar[r]^{J_\lambda} & \bigl(C^*(\Om(X))/\im(1-t)\bigr)^\vee \ar@{=}[d] \\
%   C_{-*}(LX\times ES^1/S^1) \ar[u] \ar[r]^{\ol J_\lambda} & \ker(1-t^*).
%}
%$$
$$
   \ol J_\lambda: C_{-*}(LX\times_{S^1}ES^1)\to \ker(1-t^*).
$$
(c) The following diagram commutes:
%, where $\pi_0$ denotes the quotient projection of the constant term in $u^{-1}$:
\begin{equation*}
\begin{CD}
   C^{-*+1}_{[u^{-1}]}(\Om(X)) @>{B^*\pi_0}>> C^{-*+1}_\lambda(\Om(X)) @>{\iota_0}>> C^{-*}_{[[u]]}(\Om(X))\\
   @A{J}AA @AA{J_\lambda}A @AA{J}A \\
   C_{-*+1}^{[u^{-1}]}(LX) @>{\pi_{0}}>> C_{-*+1}(LX) @>{{-}\iota_0Q}>> C_{-*}^{[[u]]}(LX).
\end{CD}
\end{equation*}
\hfill$\square$
\end{lemma}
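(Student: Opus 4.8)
The plan is to obtain all three parts by dualizing Lemma~\ref{lem:Ilambda} (together with the construction of $\ol I_\lambda$ that follows it) with respect to the Chen pairing. The starting point is that, by the very definitions recalled above, $J=I^*$ and $J_\lambda=I_\lambda^*$ with respect to the Chen pairing $\la\cdot,\cdot\ra$ and the cyclic Chen pairing $\la\cdot,\cdot\ra_\cyc$, and by the proof of Proposition~\ref{prop:chen} these pairings make the two mixed-complex structures mutually adjoint. Hence each assertion follows by taking adjoints of the corresponding statement for $I_\lambda$, using that adjunction reverses composition and reverses the arrows of a commuting diagram. For part (a) I would dualize the factorization $I_\lambda=I\circ B$ of Lemma~\ref{lem:Ilambda}(a): the computation $\la I_\lambda\om,f\ra=\la I(B\om),f\ra=\la B\om,Jf\ra=\la\om,B^*Jf\ra$ shows $J_\lambda=B^*\circ J$, which is a chain map because $J$ and $B^*$ are.

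For part (b), Lemma~\ref{lem:Ilambda}(b) states that $I_\lambda$ vanishes on $\im(1-t)$ and hence factors through $C^*_\lambda(\Om(X))=C^{*+1}(\Om(X))/\im(1-t)$. The adjoint of a map factoring through a quotient takes values in the annihilator of $\im(1-t)$, which under the standard identification is $\ker(1-t^*)=C^\lambda_{-*+1}(\Om(X))$ in the notation of Section~\ref{subsec:cyc-coh}. The induced map $\ol J_\lambda$ is then the adjoint of the chain map $\ol I_\lambda$ constructed after Lemma~\ref{lem:Ilambda} from lifts to $LX\times ES^1$, taken with respect to the evaluation pairing between cochains and chains on $LX\times_{S^1}ES^1$.

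For part (c) I would dualize the commuting square of Lemma~\ref{lem:Ilambda}(c) entry by entry. The vertical maps $I,I_\lambda,I$ become their adjoints $J,J_\lambda,J$, now pointing upward; the adjoint of the projection $\pi_0$ onto the constant term is the inclusion $\iota_0$ and conversely, the adjoint of $P$ is $Q$ by Lemma~\ref{lem:S1-space-dgmod}, and $(1-t)^*=1-t^*$. The dual-complex identifications $C^\vee[[u]]\cong C[u^{-1}]^\vee$ and $C^\vee[[u^{-1}]]\cong C[u]^\vee$ of Lemma~\ref{lem:dualcomm} turn each $[u^{-1}]$ cohomology complex into a $[[u]]$ homology complex and conversely; since dualizing simultaneously reverses the horizontal direction, the left-to-right pattern $[u^{-1}]\to\lambda\to[[u]]$ is preserved, producing exactly the claimed diagram with horizontal maps $B^*\pi_0,\iota_0$ on top and $\pi_0,\iota_0 Q$ on the bottom.

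The main obstacle is the bookkeeping in part (c): one must carry the degree shifts through Lemma~\ref{lem:dualcomm} in tandem with the $[u^{-1}]\leftrightarrow[[u]]$ interchange, and verify that the adjoints of the composite structural maps are exactly the arrows drawn in the target diagram. Concretely, the map $B\colon C^*_\lambda(\Om(X))\to C^*(\Om(X))[[u]]$ must dualize to $B^*\pi_0$, and $P\pi_0$ must dualize to $\iota_0 Q$, which is where the constant-term projections on the chain side enter. Everything else is formal once the adjunctions $\pi_0^*=\iota_0$ and $P^*=Q$ and the identifications of Lemma~\ref{lem:dualcomm} are in place.
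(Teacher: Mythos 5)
Your proposal is correct and coincides with the paper's own treatment: the paper proves Lemma~\ref{lem:Jlambda} simply by asserting that Lemma~\ref{lem:Ilambda} and the construction of $\ol I_\lambda$ dualize under the Chen pairings, which is exactly the argument you spell out. Your additional bookkeeping (the adjunctions $\pi_0^*=\iota_0$, $P^*=Q$, $(1-t)^*=1-t^*$, and the identifications of Lemma~\ref{lem:dualcomm}) fills in precisely the details the paper leaves implicit.
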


The map $\ol J_\lambda$ induces a map on homology which we denote by 
$$
   J_{\lambda*}:H_*^{S^1}(LX)\to HC^*_\lambda(\Om(X)).
$$
Passing to homology in Lemma~\ref{lem:Jlambda}(c) we obtain
the following commuting diagram, where the maps $B_*$ and
$P_{0*}$ are isomorphisms by Lemma~\ref{lem:cyccomplex-dual}
and Proposition~\ref{prop:Borel-dual}, respectively:
\begin{equation}\label{eq:Jlambda}
\begin{CD}
   HC^{-*+1}_{[u^{-1}]}(\Om(X)) @>{B_*^*\pi_{0*}}>>
   HC^{-*+1}_\lambda(\Om(X)) @>{\iota_{0*}}>{\cong}> HC^{-*}_{[[u]]}(\Om(X))\\ 
   @AA{J_*}A @AA{J_{\lambda*}}A @AA{J_*}A \\
   H_{-*+1}^{[u^{-1}]}(LX) @>{\pi_{0*}}>{\cong}> H_{-*+1}^{S^1}(LX) @>{{-}\iota_{0*}Q_*}>> H_{-*}^{[[u]]}(LX).
\end{CD}
\end{equation}

{\bf The simply connected case. }
Assume now in addition that $X$ is simply connected. 
Then by Theorem~\ref{thm:GJP}, Chen's iterated integral
$I:C_*(\Om(X))\to C^*(LX)$ is a quasi-isomorphism of mixed complexes. 
Since the homologies $HH_*(\Om(X))$ (by Lemma~\ref{lem:findga}) and
$H_*(LX)$ (by Lemma~\ref{lem:finloopspace}) are graded finite dimensional,
Corollary~\ref{cor:adjequivmixed} applied to the Chen pairing yields

\begin{corollary}\label{cor:GJP-dual}
Let $X$ be a simply connected manifold. Then the dual Chen iterated integral
$J:C_{-*}(LX)\to C^{-*}(\Om(X))$ defines a quasi-isomorphism of mixed complexes,
and therefore induces isomorphisms on the $[[u]]$, $[[u,u^{-1}]$ and
$[u^{-1}]$-versions of cyclic cohomology. 
\hfill$\square$
\end{corollary}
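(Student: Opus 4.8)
The plan is to obtain this corollary as a formal consequence of the duality machinery of Section~\ref{subsec:dual-mixed}, fed by the simply connected case already established for $I$. First I would recall that the Chen pairing $\la\cdot,\cdot\ra:C^*(\Om(X))\otimes C_{-*}(LX)\to\R$ realizes $I$ and $J$ as exactly the adjoint pair $f_{12},f_{21}$ attached to a pairing of mixed complexes in Section~\ref{subsec:generalities}. Concretely I would take $C_1=C^*(\Om(X))$ with its Hochschild differential $d+b$ and Connes operator $B$, and $C_2=C_{-*}(LX)$ with $\p$ and $Q$, so that under the grading conventions one has $C_2^\vee=C^*(LX)$ and $C_1^\vee=C_{-*}(\Om(X))$, giving $f_{12}=I:C_1\to C_2^\vee$ and $f_{21}=J:C_2\to C_1^\vee$. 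That the pairing makes both differentials in each complex mutual adjoints was verified in the proof of Proposition~\ref{prop:chen}, so $I$ and $J$ are genuine morphisms of mixed complexes.

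The next step is to check the hypotheses of Corollary~\ref{cor:adjequivmixed}. The underlying homologies are $H(C_1,d+b)=HH^*(\Om(X))$ and $H(C_2,\p)=H_{-*}(LX)$. For $X$ simply connected the dga $\Om^*(X)$ has $\dim H^0=1$, $H^1=0$ and graded finite dimensional cohomology, so Lemma~\ref{lem:findga} gives graded finite dimensionality of $HH^*(\Om(X))$, while Lemma~\ref{lem:finloopspace} gives it for $H_*(LX)$. By Theorem~\ref{thm:GJP} the map $I=f_{12}$ is a quasi-isomorphism of mixed complexes, and Corollary~\ref{cor:adjequivmixed} then forces $J=f_{21}$ to be a quasi-isomorphism of mixed complexes as well.

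Finally I would translate this into the stated isomorphisms. By the definition in Section~\ref{subsec:cyc-coh}, each flavour of cyclic cohomology of $\Om(X)$, resp.\ of $LX$, is the corresponding flavour of cyclic homology of the dual mixed complex, namely of $C_{-*}(\Om(X))$, resp.\ of $C_{-*}(LX)$, and $J$ is precisely the morphism of mixed complexes between these two. Since $J$ is a quasi-isomorphism, Proposition~\ref{prop:quism-invariance} applies: the three classical flavours $[[u]]$, $[[u,u^{-1}]$ and $[u^{-1}]$ are quasi-isomorphism invariants, so $J$ induces isomorphisms on exactly these three versions of cyclic cohomology, which is the claim.

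I do not anticipate a real obstacle, as all the substance sits in the earlier results: the analytic input (that $I$ is a quasi-isomorphism for $X$ simply connected) is Theorem~\ref{thm:GJP}, the finiteness input is Lemmas~\ref{lem:findga} and~\ref{lem:finloopspace}, and the transfer from $I$ to its adjoint $J$ is the reflexivity argument of Corollary~\ref{cor:adjequivmixed}. The only point demanding care is the grading bookkeeping behind the identifications $C_2^\vee=C^*(LX)$ and $C_1^\vee=C_{-*}(\Om(X))$, which is what guarantees that the cyclic \emph{cohomology} flavours of the spaces coincide with the classical cyclic \emph{homology} flavours of the dual mixed complexes to which Proposition~\ref{prop:quism-invariance} is applied.
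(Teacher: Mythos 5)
Your proposal is correct and follows essentially the same route as the paper: the paper likewise deduces the corollary by applying Corollary~\ref{cor:adjequivmixed} to the Chen pairing, with Theorem~\ref{thm:GJP} supplying that $I$ is a quasi-isomorphism and Lemmas~\ref{lem:findga} and~\ref{lem:finloopspace} supplying the graded finite dimensionality hypotheses, and then invoking quasi-isomorphism invariance of the three classical flavours. Your extra care with the grading identifications $C_2^\vee=C^*(LX)$ and $C_1^\vee=C_{-*}(\Om(X))$ is exactly the bookkeeping implicit in the paper's argument.
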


As in the proof of Theorem~\ref{thm:main-homol}, we obtain a commuting diagram
\begin{equation*}
\xymatrix{
  & H^{S^1}_{*}(LM) \ar[r]^{J_{\lambda*}} & HC_\lambda^{*}(\Om(X)) \ar[d]^{\iota_{0*}J_*^{-1}}_\cong \\
  H_*^{[[u,u^{-1}]}(LX) \ar[d]^\cong \ar[r]^{p_*} & H_*^{[u^{-1}]}(LX)
    \ar[u]^{\pi_{0*}}_\cong \ar[r]^{{-}\iota_{0*}Q_{0*}} & H_{*-1}^{[[u]]}(LX)
    \ar[d]^{\pi_*} \ar[r]^{\cdot u} & H_{*+1}^{[[u,u^{-1}]}(LX) \ar[d]^\cong \\ 
  \R[u,u^{-1}] \ar[r]^{p_*} & \R[u^{-1}] \ar[u]^{\iota_*} \ar[r]^0
       & \R[u] \ar[r]^{\cdot u} & \R[u,u^{-1}]\,.
} 
\end{equation*}
Here the lower two rows follow from~\eqref{eq:proj-point-dual} for $Y=LX$
where $\iota_*$ is injective, $\pi_*$ is surjective, and the outer
vertical maps are isomorphisms by the dual version of Goodwillie's Theorem~\ref{thm:Goodwillie}.
The upper square follows from diagram~\eqref{eq:Jlambda}, where $J_*$
is an isomorphism by Corollary~\ref{cor:GJP-dual}.
%and we have abbreviated $\iota_0P_0$ as $P_0$.
We read off
\begin{align*}
   \ker J_{\lambda*} &= \pi_{0*}\iota_*\R[u^{-1}], \cr
   \im J_{\lambda*} &= \ker\Bigl(\pi_*\iota_{0*}J_*^{-1}:HC_{*}^\lambda(\Om(X))\to\R[u]\Bigr),
\end{align*}
and passing to reduced homologies as in the proof of Corollary~\ref{cor:main} we conclude

\begin{cor}\label{cor:main-dual}
Let $X$ be a simply connected manifold. Denote by
$\ol{HC}_\lambda^*(\Om(X))$ the reduced Connes version of cyclic cohomology of the
de Rham complex of $X$, and by  
$H_*^{S^1}(LX,x_0)$ the $S^1$-equivariant homology of $LX$ relative
to a fixed constant loop $x_0$. Then Chen's iterated integral induces
an isomorphism
\begin{align*}
   J_{\lambda*}: H_*^{S^1}(LX,x_0) \stackrel{\cong}\longrightarrow \ol{HC}_\lambda^*(\Om(X))\,.
\end{align*}
\hfill$\square$
\end{cor}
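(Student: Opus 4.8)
The plan is to mirror the proof of Corollary~\ref{cor:main} on the dual side, using the large commuting diagram and the computations of $\ker J_{\lambda*}$ and $\im J_{\lambda*}$ already established just above the statement; the only genuinely new bookkeeping is the interaction of the two ``point'' splittings with reduction. First I would record that both the source and the target of $J_{\lambda*}$ carry a canonical direct summand coming from the fixed constant loop $x_0$. On the loop space side, the $S^1$-equivariant maps $\pt\stackrel{\iota}\longrightarrow LX\stackrel{\pi}\longrightarrow\pt$ with $\pi\iota=\id$ split off a copy of $\R[u]\cong H_*^{S^1}(\pt)$, with reduced complement $H_*^{S^1}(LX,x_0)$; this is exactly the bottom row of the large diagram, via~\eqref{eq:proj-point-dual}. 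On the de Rham side, the augmentation $\Om(X)=\R\cdot 1\oplus\ol\Om(X)$ splits off the Connes cohomology of the point, with reduced complement $\ol{HC}^\lambda_*(\Om(X))$, by the homological dual of Corollary~\ref{cor:reduced-cyc-hom}.

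Next I would verify that $J_{\lambda*}$ respects these splittings, so that reduction is meaningful. Since $J_\lambda=B^*\circ J$ is the adjoint of $I_\lambda=I\circ B$ by Lemma~\ref{lem:Jlambda}(a), its compatibility with the projection to $x_0$ is dual to the corresponding property of $I_\lambda$; concretely, the commuting square in Lemma~\ref{lem:Jlambda}(c) together with~\eqref{eq:proj-point-dual} identifies the point summand on the loop space side with the $\R[u]$-summand in the bottom row, and shows that $J_{\lambda*}$ carries the former into the latter. This is the step I expect to require the most care: it is a chain-level adjunction argument, and one must track the augmentation of $\Om(X)$ through the duality pairing of Section~\ref{subsec:generalities} to be sure that the de Rham point summand is precisely the one divided out.

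Granting this compatibility, I would pass to reduced homologies exactly as in Corollary~\ref{cor:main}. By the dual version of Goodwillie's Theorem~\ref{thm:Goodwillie} the outer groups $H_*^{[[u,u^{-1}]}(LX)\cong\R[u,u^{-1}]$ are entirely the point contribution, so they vanish after reduction; dividing out the whole bottom row then collapses the large diagram to
\begin{equation*}
\xymatrix{
  & H^{S^1}_{*}(LX,x_0) \ar[r]^{J_{\lambda*}} & \ol{HC}^\lambda_{*}(\Om(X)) \ar[d]^{\iota_{0*}J_*^{-1}}_\cong \\
  0 \ar[r] & H_{*}^{[u^{-1}]}(LX,x_0) \ar[u]^{\pi_{0*}}_\cong \ar[r]^{\iota_{0*}Q_{0*}}_\cong & H_{*-1}^{[[u]]}(LX,x_0) \ar[r] & 0
}
\end{equation*}
Here $\pi_{0*}$ is an isomorphism by the reduced form of Proposition~\ref{prop:Borel-dual}, $\iota_{0*}J_*^{-1}$ is an isomorphism by Corollary~\ref{cor:GJP-dual} together with Lemma~\ref{lem:cyccomplex-dual}, and the reduced tautological sequence forces the middle horizontal map $\iota_{0*}Q_{0*}$ to be an isomorphism since its neighbours are now $0$.

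Finally, I would read off the conclusion directly from the kernel and image already computed: $\ker J_{\lambda*}=\pi_{0*}\iota_*\R[u^{-1}]$ lies entirely in the point summand and hence vanishes after reduction, giving injectivity, while $\im J_{\lambda*}=\ker\bigl(\pi_*\iota_{0*}J_*^{-1}\bigr)$ becomes all of $\ol{HC}^\lambda_*(\Om(X))$ once the target $\R[u]$ of $\pi_*$ is divided out, giving surjectivity. Chasing the collapsed diagram then exhibits $J_{\lambda*}$ as a composite of isomorphisms, which proves the claim.
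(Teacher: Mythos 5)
Your proposal is correct and follows essentially the same route as the paper: the paper also sets up the dual commuting diagram built from~\eqref{eq:proj-point-dual}, \eqref{eq:Jlambda}, Goodwillie's theorem and Corollary~\ref{cor:GJP-dual}, reads off $\ker J_{\lambda*}$ and $\im J_{\lambda*}$, and then passes to reduced homologies exactly as in Corollary~\ref{cor:main}. You merely make explicit the compatibility of $J_{\lambda*}$ with the two point-splittings, which the paper leaves implicit in the phrase ``passing to reduced homologies as in the proof of Corollary~\ref{cor:main}''.
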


%%%%%%%%%%%%%%%%%%%%%%%%%%%%%%%%%%%%%%%%%%%%%%%%%%%%%%%%%%%%%%%%%%%
%%%%%%%%%%%%%%%%%%%%%%%%%%% References %%%%%%%%%%%%%%%%%%%%%%%%%%%%
%%%%%%%%%%%%%%%%%%%%%%%%%%%%%%%%%%%%%%%%%%%%%%%%%%%%%%%%%%%%%%%%%%%

\bibliographystyle{abbrv}
\bibliography{./000_cyc}

\end{document}